\providecommand{\1}{}
\renewcommand{\1}{\mathds{1}}
\theoremstyle{plain}
\newtheorem{thm}{Theorem}[section]
\newtheorem*{thm*}{Theorem}
\newtheorem{prop}[thm]{Proposition}
\newtheorem{cor}[thm]{Corollary}       
\newtheorem{lem}[thm]{Lemma}
\theoremstyle{definition} 
\newtheorem{defn}[thm]{Definition} 
\newtheorem{hyp}[thm]{Hypothesis}
\newtheorem{ex}[thm]{Example}
\newtheorem{exs}[thm]{Examples}
\newtheorem{rem}[thm]{Remark}   
\newtheorem{nota}[thm]{Notation}
\newcommand{\A}{\mathscr{A}}
\newcommand{\C}{\mathscr{C}}
\newcommand{\Dsf}{\mathsf{D}}
\newcommand{\G}{\mathcal{G}}
\newcommand{\F}{\mathscr{F}}
\newcommand{\cO}{\mathscr{O}}
\newcommand{\K}{\mathcal{K}}
\renewcommand{\L}{\mathcal{L}}
\newcommand{\p}{\mathfrak{p}}
\newcommand{\q}{\mathfrak{q}}
\newcommand{\m}{\mathfrak{m}}
\newcommand{\CS}{\mathcal{S}}
\newcommand{\T}{\mathsf{T}}
\newcommand{\Sp}{\mathrm{Sp}}
\renewcommand{\mod}[1]{\mathrm{Mod}_{#1}}
\newcommand{\calg}[1]{\mathrm{CAlg}_{#1}}
\newcommand{\cell}[1]{{#1}\text{-}\mathrm{cell}\text{-}}
\renewcommand{\phi}{\varphi}
\newcommand{\Q}         {{\mathbb{Q}}}
\newcommand{\Z}         {{\mathbb{Z}}}
\newcommand{\Hom}       {\operatorname{Hom}}
\newcommand{\supp}{\operatorname{supp}}
\newcommand{\cptors}{\C_{\mathrm{pre}\text{-}\mathrm{t}}}
\newcommand{\cpstan}{\C_{\mathrm{pre}\text{-}\mathrm{s}}}
\newcommand{\cptorse}{\cptors^e}
\newcommand{\cpstane}{\cpstan^e}
\newcommand{\bZp}{\mathbb{Z}_p^{\wedge}}
\newcommand{\cE}{\mathcal{E}}
\newcommand{\cEi}{\cE^{-1}}
\newcommand{\bbT}{\mathbb{T}}
\newcommand{\tensor}{\otimes}
\newcommand{\stan}{\mathrm{stan}}
\newcommand{\spcc}{\mathrm{Spc}^{\omega}} %Spcc would be fine if we
\newcommand{\tors}{\mathrm{tors}}
\newcommand{\downcl}{\land}
\newcommand{\upcl}{\lor\!}
\newcommand{\gen}{\mathfrak{g}}
\newcommand{\sm}{\wedge}
\newcommand{\thick}{\mathrm{Thick}}
\newcommand{\tthick}{\mathrm{Thick}_{\tensor}}
\newcommand{\loc}{\mathrm{Loc}}
\newcommand{\tloc}{\mathrm{Loc}_{\tensor}}
\newcommand{\map}{\mathrm{map}}
\newcommand{\ctors}{\C_{t}}
\title{Torsion models for tensor-triangulated categories:\\the one-step case}
\author{Scott Balchin}
\address[Balchin]{Max Planck Institute for Mathematics, Vivatsgasse 7, 53111 Bonn, Germany}
\email{balchin@mpim-bonn.mpg.de}
\author{J.P.C. Greenlees}
\address[Greenlees]{Warwick Mathematics Institute, Zeeman Building, Coventry, CV4 7AL, UK}
\email{john.greenlees@warwick.ac.uk}
\author{Luca Pol}
\address[Pol]{Fakult\"{a}t f\"{u}r Mathematik, Universit\"{a}t Regensburg, Universit\"{a}tsstra{\ss}e 31, 93053 Regensburg, Germany}
\email{luca.pol@ur.de}
\author{Jordan Williamson}
\address[Williamson]{Department of Algebra, Faculty of Mathematics and Physics, Charles University in Prague, Sokolovsk\'{a} 83, 186 75 Praha, Czech Republic}
\email{williamson@karlin.mff.cuni.cz}
\begin{document}
\begin{abstract}
Given a suitable stable monoidal model category $\C$ and a
specialization closed subset $V$ of its Balmer spectrum one can produce a
Tate square for decomposing objects into the part supported over $V$ and
the part supported over $V^c$ spliced with the Tate object. Using
this one can show that $\C$  is Quillen equivalent
to a model built from the data of local torsion objects, and the
 splicing data lies in a rather rich category. As an application, we promote the 
torsion model for the homotopy category of rational circle-equivariant
spectra from \cite{Greenlees99} to a Quillen equivalence. In addition, a close analysis of the one-step case highlights important features needed for general torsion models which we will return to in future work.
\end{abstract}

\maketitle
\setcounter{tocdepth}{1}
\tableofcontents

\section{Introduction}
The goal of this series of papers is to prove the existence of a model for a
tensor-triangulated category $\T$ which is built from local torsion
objects. When $\T$ is the derived category $\Dsf(R)$ of a commutative
Noetherian ring $R$, this corresponds to building a module from its
local cohomology at each prime, and  when working in chromatic homotopy
theory this corresponds to building a spectrum from its monochromatic
pieces. The main observation is that the splicing data lies over
completed rings, making it more algebraic.   
The motivating example is the torsion model of \cite{Greenlees99}
which is an algebraic counterpart of reconstructing a rational
circle-equivariant spectrum from pieces with a single geometric
isotropy group, and this work began with a view to extending this model to higher
dimensional tori. 

\subsection{Overview}
Given a tensor-triangulated category $(\T, \otimes, \1)$ one can consider the Balmer
spectrum $\spcc(\T)$ of its compact objects, which is a spectral topological space 
categorifying the Zariski spectrum of a commutative ring. This allows
one to apply methods from
commutative algebra in more general contexts. The reader may wish to
consider the special case $\T=\Dsf(R)$ for a commutative Noetherian
ring $R$ whilst reading the motivation. We will give the general
details later. 

In reconstructing an $R$-module $M$ there are several approaches. The
\emph{Zariski standard model} reconstructs $M$ from its localizations $L_{\wp}M:=M_{\wp}$ at primes
$\wp$, giving the equivalence between $R$-modules and quasi-coherent sheaves 
over $\spcc(\T)$ where the rather complicated reconstruction data is encoded
in the topology on the \'etale space of the sheaf. This has the
feature that $L_{\wp}M$ will usually have support spread over several
primes. 

We may instead assemble
a module from its torsion pieces supported at individual primes; in
effect, we reconstruct $M$ from its local cohomology $L_{\wp}\Gamma_{\wp}M$
(essentially $H^*_{\wp}(M_{\wp})$) at the various primes. In other words, 
we reconstruct a module $M$ from a version of its Cousin complex. We call this the \emph{Zariski torsion model}. 

As an alternative to using localizations,  we may reconstruct $M$ from
its localized completions $(M_{\wp}^{\wedge})_{\wp}$ as in
\cite{BalchinGreenlees}, which we call the \emph{adelic standard
  model} (properly speaking, if $M$ is not compact we use $L_\wp
\Lambda_\wp \1 \tensor M$; the model based on $L_\wp\Lambda_\wp M$ is
the \emph{adelic categorical model} which will not concern us here). 
The advantage is two fold. First, we may use  adelic
structure to stick the pieces together, which is simpler and more
algebraic than for the Zariski standard model, and second the localized completed rings are more likely to
be algebraic, or even formal. 

Finally, there is the \emph{adelic torsion model}, which is the subject of this paper: we are once again
reconstructing $M$ from its local cohomology, but now working over
completed rings. Since local cohomology does not notice completions,
the pieces themselves are essentially the same as for the torsion
version of the Zariski model, but now the reconstruction takes place
in a more algebraic environment: it is this richer version that we
call the adelic torsion model. Henceforth we will refer to the adelic
standard and torsion models simply as the standard and torsion models, 
unless we wish to emphasize the distinction between the adelic and the Zariski approach.

The standard and torsion approaches have different advantages and
disadvantages. The standard approach behaves well for monoidal
structures. Since completion is exact on finitely generated
modules,  the splicing data in the standard approach is often represented at the abelian
category level and one hopes to have lower homological dimension in
algebraic versions. 
On the other hand objects tend to be quite large from a naive point of
view. Objects in the torsion model tend to be smaller, but monoidal
information is lost. Furthermore,  the local cohomology of nice modules
is in the top cohomological degree, so the splicing information in the torsion model
tends to be only visible at the derived level and the homological
dimension in algebraic versions is larger. 

At this point, it is worth considering the simple example of the derived category 
$\Dsf(\Z_{(p)})$ of the $p$-local integers. We consider the
reconstruction of  a $\Z_{(p)}$-module $M$ in each of the four models.
\begin{itemize}
\item In the Zariski standard model, since $\Z_{(p)}$  is a local ring, the module itself occurs as
the stalk over $p$, so the Zariski model does not simplify anything. 
\item In the Zariski torsion model we consider $M$ as 
built from $M\tensor \Q$ (the contribution at $(0)$) and $M\tensor 
\Q /\Z_{(p)}$ (derived tensor, the contribution at $(p)$). 
\item In the adelic standard model we 
consider $M$ built from $M\tensor \Q$ (the contribution at $(0)$) and $M\tensor 
\Z_p^{\wedge}$ (the contribution at $(p)$). 
\item In the adelic categorical model we 
consider $M$ built from $M\tensor \Q$ (the contribution at $(0)$) and
$\Lambda_p M$ (the contribution at $(p)$). 
\item In the adelic torsion model we again consider $M$ as 
built from $M\tensor \Q$ (the contribution at $(0)$) and $\Lambda_pM\tensor 
\Q /\Z_{(p)}=M\tensor 
\Q /\Z_{(p)}$,  but now the latter is viewed as a module over the
$p$-adic integers. 
\end{itemize}

Looking at an example from equivariant homotopy illustrates the
distinction between the Zariski torsion model and the adelic torsion
model more starkly, but we will return to that when we have more
language.

\subsection{The dimension filtration}
It is natural  to mimic algebraic geometry, filtering the space $\spcc(\T)$ by the dimension of its primes
$$\emptyset=\spcc(\T)_{\leq -1}\subseteq \spcc(\T)_{\leq 0}\subseteq \spcc(\T)_{\leq 1}\subseteq \cdots \subseteq
\spcc(\T)$$
where $\spcc(\T)_{\leq s}$ consists of those primes of dimension $\leq
s$.  Following \cite{BalmerFiltrations} and~\cite{Stevenson} the category $\T$ has a
corresponding filtration $\T_0\subseteq  \T_1 \subseteq \T_2\subseteq
\cdots \subseteq \T$  by dimension of support. 
In any case, the set
$\spcc(\T)_{\leq s}\setminus \spcc(\T)_{\leq s-1}$ consists of primes
of exact dimension $s$. With the torsion approach,  the Verdier quotient
$\T_{s}/\T_{s-1}$ splits as a  sum of
contributions from primes of exact dimension $s$. For the torsion
model, the part over a prime $\p$ is $\p$-local and torsion.  

In view of this splitting, our main focus is on how to splice together the part of $\T$
coming from   primes of dimension $s$ to  the part of $\T$ supported
 on the generalization closed subset $U=\spcc(\T)_{>s}$ of primes of
 dimension  $>s$. It is in this splicing that the distinction between
 the Zariski and adelic torsion models appears. We will focus our
 attention on categories where the Balmer spectrum is  Noetherian since the splicing is
 significantly simpler in that case.

In the present paper we consider  the case when there 
is just one step in the assembly process. 
We will follow this process through for the torsion model
in  a higher dimensional Noetherian setting in \cite{torsion2}. 
Ensuring that the splicing data is algebraic requires us to refine 
the  naive iterative approach. 

When the Balmer spectrum is not Noetherian, the Zariski topology is no
longer generated by the closures of points, bringing a significant additional 
layer of complication: the general framework  
for assembling models from a good filtration of the Balmer 
spectrum in this context is studied in \cite{prismatic}. 

\subsection{The one-step context}
When we are dealing with just one step, it is helpful to consider a
more general context so we can identify particular features which lead
to good behaviour. 

We begin with a smashing localization $L$ giving the context of
\cite{Greenlees01b} from which we can construct our basic framework. 
The associated torsion functor is  the fibre $\Gamma $, and this then also
gives the associated completion $\Lambda =\Hom (\Gamma \1, -)$. Altogether we have the diagram 
\[\begin{tikzcd}
\Gamma \1 \arrow[d, "\simeq"']\arrow[r]& \1 \arrow[d]\arrow[r]& L\1 \arrow[d] \\
\Gamma \Lambda \1 \arrow[r]& \Lambda \1 \arrow[r] & L\Lambda\1 
\end{tikzcd} 
\]
in which the right hand square is a homotopy pullback.

For the motivating example in our general programme, we choose a subset $U$ of the spectrum $\spcc(\T)$ closed under generalization, such as the set of primes of 
dimension $> s$. One may then form a smashing localization $X \to 
LX$ supported at $U$.

It may be helpful to consider three running examples to illustrate the discussion.
\begin{enumerate} 
\item The derived category 
$\Dsf(\Z_{(p)})$ of the $p$-local integers, where we take $L$ to be rationalization. Equivalently this  is given by choosing $U$ to consist of the zero ideal.
\item The $E(n)$-local 
category of spectra $L_{n}\Sp$ (implicitly $p$-local). Here we take $L$ to be the smashing localization at the Johnson--Wilson theory $E(n-1)$, which corresponds to setting $U$ to be the collection of primes of dimension $>0$.
\item The category $\Sp_G$ of $G$-equivariant spectra for $G$ a compact Lie 
group. Here we choose a family $\F$ of subgroups and take $L$ to be the smashing
localization making the underlying 
spectrum $\F$-contractible.   
\end{enumerate}
\begin{center}
\begin{tabular}{c|c|c|c|c|c}
$\T$ & $\1$ & $L\1$ & $\Gamma\1$ & $\Lambda\1$ & $L\Lambda\1$ \\
\hline 
$\Dsf(\Z_{(p)})$ & $\Z_{(p)}$ & $\mathbb{Q}$ & $\Sigma^{-1}\mathbb{Q} /\Z_{(p)}$ & $\mathbb{Z}_p^\wedge$ & $\Q \otimes \Z_p^\wedge$ \\
& & & & \\
$L_{n}\Sp$ & $L_{n}S^0$ & $L_{n-1}S^0$ & $M_nS^0$ & $L_{K(n)}S^0$ & $L_{n-1}L_{K(n)}S^0$ \\
& & & & \\
$\Sp_G$ & $S^0$ & $\widetilde{E}\F$ & $E\F_+$ & $DE\F_+$ & $\widetilde{E}\F \wedge DE\F_+$ \\
\end{tabular}
\end{center}
For more details on these examples, see Section~\ref{sec-example}. 

The above diagram shows that the \emph{Tate square} 
\[\begin{tikzcd}
\1 \arrow[r] \arrow[d] & L\1 \arrow[d] \\
\Lambda \1 \arrow[r] & L\Lambda\1
\end{tikzcd} \] 
expresses the unit $\1$ of $\T$  as the homotopy pullback 
of localized completed pieces. Moreover, the objects occurring in the
Tate square are still monoid objects in $\T$, and the square
yields the standard model of $\T$ in terms of modules over the
localized-completed pieces~\cite{BalchinGreenlees}. The price to pay for
having monoids is that the 
completion functor involves some `blurring' around $V$ so
that  the objects may have contributions from  open sets containing $V$
(for example $\bZp$ has non-trivial rationalization).  
Indeed, if the completion had support precisely at $V$, then the Tate object $L\Lambda\1$ would be equivalent
to the zero object and $\1$ would be a product. 
The fact that the Tate object $L\Lambda\1$ is
non-zero is because of the blurring appearing in the completion
$\Lambda$. 

The current paper gives a related model built instead from pieces
which are torsion in the sense of the functor $\Gamma$. When we use the dimension
filtration in \cite{torsion2} the pieces will each be supported at a
single prime. 

The diagram 
\[\begin{tikzcd}
& L \1 \arrow[d] &\\
\Lambda\1 \arrow[r] & L\Lambda\1 \arrow[r]&\Sigma\Gamma \Lambda
\1\simeq \Sigma \Gamma \1
\end{tikzcd} 
\]
shows the relationship between the standard model and the torsion
model.  The standard model comes from
the left hand three objects. 
%with the name justified by the fact that it uses the Tate square.
  The torsion model comes from the
right hand three objects. This is called the torsion model because the
whole category relative to the local category is controlled by the
torsion object $\Gamma \1$. 

The two models determine each other. The three objects in the standard model  
give the Tate square by taking pullbacks, and hence the whole
diagram by taking fibres. The three objects in the torsion model give
the left hand object by taking the fibre of the horizontal and then one reconstructs the whole diagram as before.

\subsection{The torsion model}
Suppose that $\T$ is a tensor-triangulated category which is the
homotopy category of a well behaved stable monoidal model
category $\C$, see Section~\ref{sec:models} for detailed conditions. 

We now give an informal statement of the one-step torsion model. The model
is in terms of the sections of the diagram 
$$\begin{tikzcd}
 \mod{L\1} \arrow[d, "\bullet" description] &\\
 \mod{L\Lambda\1} \arrow[r] &
                                \mod{\Lambda\1} .
  \end{tikzcd}$$
This consists of an object in each vertex category together with maps
relating these (after suitable extension of
scalars). The dot on the vertical map indicates that the object
in the bottom left category is obtained from that in the top left by
extension of scalars, see Section~\ref{sec:diagrams}. More precisely, an object of this diagram is  a $\Lambda \1$-module map  $\beta \colon L \Lambda \1 \otimes_{L \1} V \to T$ where $V$ is a $L \1$-module and $T$ is a $\Lambda \1$-module.

 The main theorem states that the model category $\C$ 
is Quillen equivalent to a cellularization of the category  of
sections which introduces the torsion character of the model. The following theorem specializes Theorem~\ref{thm:torsionmodel} and Theorem~\ref{thm:bifibrant} in the main body of the paper.

\begin{thm*}
There is a Quillen equivalence $$\C \simeq_Q \mathrm{cell}\text{-}
  \left(
  \begin{tikzcd}
   \mod{L\1} \arrow[d, "\bullet" description] & \\
\mod{L\Lambda\1} \arrow[r] 
                               & \mod{\Lambda\1} 
  \end{tikzcd}
  \right).$$
The effect of the cellularization is that an object of the
tensor-triangulated category $h\C$ is equivalent to one specified by:
\begin{itemize}
\item an $L\1$-module $V$;
\item a $\Lambda\1$-module $T$ which is torsion in the sense that $\Gamma T \simeq T$;
\item a $\Lambda\1$-module map $L\Lambda\1 \otimes_{L\1} V \to T$.
\end{itemize}
\end{thm*}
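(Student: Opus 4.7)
My plan is to construct a Quillen adjunction between $\C$ and the section category of the displayed diagram, cellularize on the section side to impose the torsion condition, and finally identify the bifibrant objects as in Theorem~\ref{thm:bifibrant}.

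The first step is to equip the category of sections of
\[\begin{tikzcd}
\mod{L\1}\arrow[d,"\bullet" description] & \\ \mod{L\Lambda\1}\arrow[r] & \mod{\Lambda\1}
\end{tikzcd}\]
with a diagram-injective (or inverse-Reedy) model structure, so that an object is a triple $(V, N, \beta)$ with $\beta\colon L\Lambda\1 \otimes_{L\1} V \to N$ a $\Lambda\1$-module map. The vertical edge is extension of scalars along $L\1 \to L\Lambda\1$ and the bottom edge is restriction along $\Lambda\1 \to L\Lambda\1$; both have the formal Quillen properties required to build a cofibrantly generated stable model structure on sections.

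Next I would construct a Quillen pair $F \dashv G$ between $\C$ and sections. The right adjoint is
\[G(M) \;=\; (LM,\ \Sigma\Gamma M,\ \partial_M),\]
where $\partial_M \colon L\Lambda\1 \otimes_{L\1} LM \simeq L\Lambda M \to \Sigma\Gamma M$ is the connecting map of the triangle $\Lambda M \to L\Lambda M \to \Sigma\Gamma M$ (obtained by smashing $M$ with $\Lambda\1 \to L\Lambda\1 \to \Sigma\Gamma\1$). The left adjoint $F$ sends $(V,N,\beta)$ to the homotopy fibre of the composite $V \to L\Lambda\1 \otimes_{L\1} V \xrightarrow{\beta} N$. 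Quillen-ness of $F \dashv G$ reduces to the exactness of $L$, $\Lambda$ and $\Gamma$, and the unit $M \to GF(M)$ is an equivalence by the Tate (fracture) square of the introduction.

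The adjunction is not yet a Quillen equivalence because sections with non-torsion $N$ are not in the essential image of $G$. To fix this I would cellularize the section category at the image under $G$ of a compact generating set of $\C$ (equivalently, at $GF(\1)$). The counit then becomes an equivalence on cellular objects by the universal property of cellularization, and combined with the unit equivalence this yields Theorem~\ref{thm:torsionmodel}. For Theorem~\ref{thm:bifibrant}, one computes $\Gamma$ vertex-by-vertex on sections and shows that a cellular fibrant object is exactly one whose $\Lambda\1$-component $N$ satisfies $\Gamma N \simeq N$, producing the three-bullet description. The main obstacle is this last matching of cellularization with the explicit torsion condition: one must show that the localizing subcategory generated by $G$-cells inside the homotopy category of sections is precisely that of triples with torsion $N$, which requires a careful analysis of how $\Gamma$ on $\mod{\Lambda\1}$ interacts with extension and restriction of scalars along $\Lambda\1 \to L\Lambda\1$.
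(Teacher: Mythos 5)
Your overall strategy (section category, cellularization, identifying bifibrant objects via the torsion condition) matches the paper's, but you have the handedness of the Quillen adjunction backwards, and this is not a cosmetic slip.

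The functor you call $G$, namely $M\mapsto (LM,\ \Sigma\Gamma M,\ \partial_M)$, is built entirely from tensoring $M$ with fixed objects (it is $(L\1\otimes M,\ L\Lambda\1\otimes M,\ \Sigma\Gamma\1\otimes\Lambda\1\otimes M)$ together with connecting maps), so it preserves all colimits and must be a \emph{left} adjoint, not a right adjoint. Dually, the functor you call $F$ is a (homotopy) fibre, hence a limit construction; a left adjoint cannot be a genuine fibre. In the paper the handedness is the opposite of yours: $(-)_{\mathrm{tors}}\colon\C\to\cptors$ is the left adjoint, and the right adjoint $\mathsf{R}$ is the limit/pullback. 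This matters in two ways. First, the section category is endowed with a diagram-injective, module-projective model structure (necessarily, since the diagram shape is an inverse category and the maps are extension and restriction of scalars); in that model structure cofibrations and weak equivalences are objectwise, so it is routine to check that a tensor functor like $(-)_{\mathrm{tors}}$ is \emph{left} Quillen, whereas checking that the same functor is \emph{right} Quillen would require control over the hard-to-describe diagram-injective fibrations. Second, the Cellularization Principle of Greenlees--Shipley applies by cellularizing the target at the images of compact generators under the \emph{left} adjoint; your proposal to cellularize at images under the right adjoint $G$ would need the dual form of that principle, and your appeal to the ``unit'' equivalence is then a confusion of unit and counit. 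Once the handedness is corrected, the unit $M\to\mathsf{R}((M)_{\mathrm{tors}})$ is indeed an equivalence by the Tate square, and the argument proceeds as you intend.

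Two smaller points worth flagging. The paper first works with a pre-torsion section category $\cptors$ of quintuples $(V,W,T,f,g)$ with $W$ an independent $L\Lambda\1$-module, and only then passes to the subcategory $\cptorse$ where $f$ is an isomorphism (your $(V,N,\beta)$); this intermediate step is where the Quillen adjunction with $\C$ lives most naturally, and establishing the \texttt{dimp} model structure there requires arranging for $L\Lambda\1$ to be a cofibrant $\Lambda\1$-module (Sections~\ref{sec:nc} and~\ref{sec:convenient model structure}). Finally, the identification of the cellularization with the torsion condition (Theorem~\ref{thm:bifibrant}) is not just a vertexwise computation of $\Gamma$: the paper replaces $\G_{\mathrm{tors}}$ by a more convenient set of cells $\L$ of the form $(0 \to \Lambda\1 \otimes K \otimes G)$ and $(L\Lambda\1\otimes G\to 0)$, checks compactness of these, and uses Lemma~\ref{cor:gamma and K} to translate $(\Gamma\1\otimes\G)$-cellular acyclicity into $(\K\otimes\G)$-cellular acyclicity; your sketch leaves these steps implicit.
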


%The torsion model produces a model of the same flavour as the input
%category. In our first example the categories consist 
%of modules over conventional commutative rings. In our second example
% the $E(n)$-local category we are gluing on the monochromatic
% spectra. 

 It is rare  for the monoids appearing in the model to have formal homotopy. However, one can hope to use the torsion model as a first step towards an algebraic classification, such as the construction of an Adams spectral sequence. 
 
In very favourable cases the torsion model may be equivalent to  the derived category of
an abelian category. Currently the best examples of this come from rational
equivariant cohomology theories. With this example in mind we can highlight the advantage of the adelic torsion model over the Zariski torsion model.

The Zariski torsion model corresponds to the usual isotropy separation filtration
(constructing $X$ from $X\sm E\F_+$ and $X\sm \widetilde{E}\F$). This is
a very powerful method, but it is not very algebraic  because the sphere is
very far from being formal, and its coefficient ring is very
complicated. By contrast, the adelic torsion model does the same thing
relative to the Tate construction on the sphere
(constructing $X$ from $X\sm E\F_+$ and $X \sm DE\F_+\sm
\widetilde{E}\F$). These spectra are both modules over the ring
$DE\F_+$. If we are working rationally, then $DE\F_+$ has well
understood homotopy and is formal, so working over this ring is a big
advantage.

\subsection{Rational $\bbT$-spectra}

For any compact Lie group $G$, the
second author conjectured the existence of a graded abelian category
$\A(G)$ together with a Quillen equivalence $$\Sp_G/\mathbb{Q}
\simeq_Q d\A(G)$$ between rational $G$-spectra and differential
objects in $\A(G)$. We call  the category $\A(G)$ an
\emph{abelian model}, and $d\A(G)$ the \emph{algebraic model}.

For the circle group $G = \mathbb{T}$,  two abelian categories
$\A(\mathbb{T})$ and $\A_t(\mathbb{T})$ are defined in
\cite{Greenlees99} as follows.  Let $\cO_{\F}=
\prod_{F \in \F} H^*(B\mathbb{T}/F)$ where $\F$ is the family of
finite subgroups of $\mathbb{T}$.
Inverting the Euler classes $\cE$ gives a ring $t_\F = \cE^{-1}\cO_\F$. We define
$$\A(\mathbb{T}) = \{\beta\colon N \to t_\F \otimes V \mid
\text{$\beta$ is inversion of $\cE$}\} \qquad \A_t(\mathbb{T}) = \{t_\F \otimes V \to T\}$$
where $V$ is a graded $\mathbb{Q}$-vector space, $N$ is an
$\cO_\F$-module and $T$ is an Euler torsion $\cO_\F$-module. For more
details see Section~\ref{sec:Tspectra}. It is  also shown in
\cite{Greenlees99} 
that the derived categories of $\A(\mathbb{T})$ and $\A_t(\mathbb{T})$
are both equivalent to the homotopy category of rational
$\mathbb{T}$-spectra. The category $\A(\mathbb{T})$ is called the
\emph{standard model} for rational $\mathbb{T}$-spectra, and the
category $\A_t(\mathbb{T})$ is called the \emph{torsion model} for
rational $\mathbb{T}$-spectra. The standard model has better monoidal
properties and lower homological dimension, but the torsion model is
often more approachable  (see~\cite[Part II]{Greenlees99}).

The equivalence $h\Sp_\mathbb{T} \simeq \Dsf(\A(\mathbb{T}))$ between
rational $\mathbb{T}$-spectra and the derived category of the standard
model has since been promoted to a Quillen
equivalence~\cite{GreenleesShipley18}, shown to be monoidal in 
\cite{BarnesGreenleesKedziorekShipley17}. The general
theory developed in this paper shows the equivalence $h\Sp_\mathbb{T}
\simeq \Dsf(\A_t(\mathbb{T}))$ between rational $\mathbb{T}$-spectra
and the torsion model can be promoted to a Quillen equivalence, which moreover proves that the equivalence is triangulated. The following theorem appears in the main body of the paper as Theorem~\ref{thm-torsion model for T-spectra}.

\begin{thm*}
There is a  Quillen equivalence $$\Sp_\mathbb{T} \simeq_Q d\A_t(\mathbb{T})$$ and hence a triangulated equivalence $h\Sp_{\mathbb{T}} \simeq_{\triangle} \Dsf(\A_t(\mathbb{T}))$.
\end{thm*}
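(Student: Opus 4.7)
The plan is to specialize the general one-step torsion model (Theorem~\ref{thm:torsionmodel} together with Theorem~\ref{thm:bifibrant}) to the rational $\mathbb{T}$-equivariant setting by taking $\C$ to be a monoidal model category presenting $\Sp_\mathbb{T}/\Q$ and $L$ to be the smashing localization $X \mapsto X \sm \widetilde{E}\F$ associated with the family $\F$ of finite subgroups of $\mathbb{T}$. The running-example table of the paper identifies the relevant monoids as $L\1 \simeq \widetilde{E}\F$, $\Gamma\1 \simeq E\F_+$, $\Lambda\1 \simeq DE\F_+$ and $L\Lambda\1 \simeq \widetilde{E}\F \sm DE\F_+$, and rationally one has $\pi_\ast L\1 = \Q$, $\pi_\ast\Lambda\1 = \cO_\F$ and $\pi_\ast L\Lambda\1 = t_\F$, which are precisely the rings appearing in the definition of $\A_t(\mathbb{T})$.

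With these identifications, the torsion model theorem presents $\Sp_\mathbb{T}/\Q$ as a cellularization of sections of
\[
\begin{tikzcd}
\mod{L\1} \arrow[d, "\bullet" description] & \\
\mod{L\Lambda\1} \arrow[r] & \mod{\Lambda\1}.
\end{tikzcd}
\]
The next, and most substantial, step is to pass from these module categories over ring spectra to module categories over their graded rational homotopy rings. This requires establishing rational (commutative) formality of $\Lambda\1$ and $L\Lambda\1$ and lifting it to a zig-zag of monoidal Quillen equivalences between the associated module categories, compatible both with extension of scalars along the vertical map and with the horizontal map. The necessary formality results and ring-to-module transfers in the $\mathbb{T}$-equivariant rational setting are available from the work of Shipley and of Greenlees--Shipley; assembling them into a diagrammatic equivalence that translates the section category and its cellularization faithfully is the main technical task.

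Once the section model has been transported to the algebraic side, the remaining identifications are essentially formal. The cellularization imposes that the $\cO_\F$-module sitting at the bottom-right vertex be supported on $\Gamma\1$, which rationally is exactly the Euler-torsion condition defining objects of $\A_t(\mathbb{T})$. Unpacking an object of the resulting algebraic section diagram gives a graded $\Q$-vector space $V$, a $t_\F$-module map $t_\F \otimes V \to T$ to an Euler-torsion $\cO_\F$-module $T$, which is precisely the datum defining $d\A_t(\mathbb{T})$. The triangulated equivalence of homotopy categories then follows formally. The hardest part will be the coherent lift of formality to monoidal Quillen equivalences of sections, matching extension of scalars on both the topological and algebraic sides; it is here that the specifically rational, circle-equivariant input is essential and cannot be replaced by the general machinery of the paper alone.
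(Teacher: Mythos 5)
Your high-level plan agrees with the paper's route: specialize the one-step torsion model to $\Sp_\bbT$ with $L$ the $\F$-nullification, pass to algebra via formality of the cospan $\cO_\F \to t_\F \leftarrow \Q$, and identify the result with $d\A_t(\bbT)$. However, there are two substantive gaps.

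First, you say nothing explicit about removing equivariance. The module categories $\mod{\widetilde{E}\F}$, $\mod{DE\F_+}$, $\mod{DE\F_+\wedge\widetilde{E}\F}$ that Theorem~\ref{thm:torsionmodel} produces live in $\Sp_\bbT$, while Shipley's algebraicization theorem applies to commutative $H\Q$-algebras in nonequivariant spectra. The paper bridges this by categorical $\bbT$-fixed points via \cite{GreenleesShipley14c}, and this step is not free: $(-)^\bbT$ is not right Quillen between the \emph{flat} model structures that commutativity requires, so one must run a careful zig-zag between the stable and flat model structures (Section~\ref{sec:fiddlingwithmodels}) and choose cofibrant replacements $Q(-)$, $Q_f(-)$ compatibly so that the diagram of rings stays commutative and the resulting restriction of scalars is left Quillen. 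Your phrase ``ring-to-module transfers in the $\bbT$-equivariant rational setting'' gestures in this direction but does not confront the flat/stable mismatch, which is precisely where the argument would break if you worked naively.

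Second, and more seriously, you treat the final identification as formal when it is not. After formality you obtain a Quillen equivalence between $\Sp_\bbT$ and a \emph{cellularization} of $\widehat{\A}_t(\bbT)$, and Theorem~\ref{thm:bifibrant} tells you only that the bifibrant objects satisfy the \emph{derived} torsion condition $\Gamma T \xrightarrow{\sim} T$. The category $\A_t(\bbT)$ is defined with \emph{strict} Euler-torsion $\cO_\F$-modules, and passing from the cellularized model to $d\A_t(\bbT)$ is the content of the cellular skeleton theorem (Theorem~\ref{thm:cellularskeleton}). That theorem requires real work: one must show $\A_t(\bbT)$ has enough injectives, construct the right adjoint $\Gamma_t$ to the inclusion $d\A_t \hookrightarrow \widehat{\A}_t$ via a double pullback, use the \texttt{dimi} model structure (torsion modules have no projectives), identify the correct compact generating cells (Lemma~\ref{lem-cell} computes $\pi_*^\bbT$ of the images of $\bbT/H_+$ and matches them with $(t_\F \to 0)$ and $(0\to\Q_F)$ using Corollary~\ref{cor-euler torsion}), and check the counit is an equivalence on cellular objects. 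None of this is a formal consequence of what precedes it, so your proposal as written does not establish the stated theorem.
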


The proof of the equivalence $h\Sp_\mathbb{T} \simeq
\Dsf(\A_t(\mathbb{T}))$ given in~\cite{Greenlees99} 
passes through the equivalence $h\Sp_\mathbb{T} \simeq
\Dsf(\A(\mathbb{T}))$ to the standard model, obscuring the relation
between the topology and the abelian torsion model
$\A_t(\mathbb{T})$. Our proof is more direct: it proceeds by splitting
$\mathbb{T}$-spectra into its $\F$-local and $\F$-torsion parts and
showing the resulting model is formal. 

In future work \cite{torsion2}, we will describe a torsion model for rational
$G$-spectra when $G$ is a torus of arbitrary rank. 

\subsection*{Conventions}
We will follow the convention of writing the left adjoint on the top in
adjoint pairs displayed horizontally, and on the left in adjoint pairs
displayed vertically.  Weak equivalences have a direction, but weak
equivalence of objects is the resulting equivalence
relation. Similarly for Quillen equivalences. Given a (graded or
ungraded) ring $R$ we write $\mod{R}$ for the category of
dg-$R$-modules. All cellularizations are at stable sets of objects.

\subsection*{Acknowledgements}
The first and second authors were supported by the grant EP/P031080/2. The third author thanks the SFB 1085 Higher Invariants in Regensburg for support. The fourth author was supported by the grant GA~\v{C}R 20-02760Y from the Czech Science Foundation.

The first and second authors are grateful to Tobias Barthel for
numerous conversations about various models based on the Balmer
spectrum.  The authors would also like to thank the Isaac Newton Institute for Mathematical Sciences, Cambridge, for support and hospitality during the programme \emph{$K$-theory, algebraic cycles and motivic homotopy theory}.

\section{Diagrams of module categories}

  Throughout the paper we  freely use the language of model categories
  and use~\cite{Hovey99} as a reference.
  We briefly recall some results on categories of modules and on diagrams of model 
  categories as they will be used repeatedly throughout.  
  
\subsection{Module categories}

  Let $\C$ be a monoidal model category and let $R$ be a monoid in $\C$. 
  Denote by $\mod{R}(\C)$ the category of (left) $R$-modules in $\C$. 
  If the ambient category is clear, we will omit it from the notation. 
  We will be interested in two model structures on $\mod{R}(\C)$:
  \begin{itemize}
  \item \emph{projective model structure:} the weak equivalences and fibrations are 
  determined in the underlying category $\C$. This exists under mild conditions on $\C$, 
  see~\cite{SchwedeShipley00}.
  \item \emph{injective model structure:} the weak equivalences and cofibrations are 
  determined in the underlying category $\C$.  This exists under more stringent hypotheses on 
  $\C$, see~\cite{HessKedziorekRiehlShipley17}.
  \end{itemize}
  
  We will be particularly interested in the following algebraic cases. 

\begin{ex}
  Let $R$ be a graded ring. The category of dg-modules over $R$ supports a projective model 
  structure where the weak equivalences are the quasi-isomorphisms, the fibrations are the 
  surjections, and the cofibrations are the monomorphisms which are
  split on the underlying graded modules and which 
  have dg-projective cokernel. See~\cite{Abbasirad} for details. 
\end{ex}

\begin{ex}
  The category of dg-modules over $R$ also supports an injective model structure where the 
  weak equivalences are the quasi-isomorphisms and the cofibrations are the monomorphisms, 
  see~\cite[2.3.13]{Hovey99} for example. This is important for our paper for the following reason. 
  For a finitely generated homogeneous ideal $I$, let us consider the full subcategory 
  of $I$-power torsion modules.   One observes that the category of torsion 
  modules is abelian but does not have enough projectives, in particular, it does not support a projective model structure.   However, one can show that it has enough injectives and so it does support a injective
  model structure, see~\cite[2.3.13]{Hovey99} and \cite[8.6]{GreenleesShipley11}.
\end{ex}

Given a map of monoids $\theta\colon S \to R$ in $\C$ there is an adjoint triple
\[
\begin{tikzcd}
\mod{S} \arrow[rr, yshift=3.5mm, "\theta_\ast" description] \arrow[rr, yshift=-3.5mm, "\theta_!" description] & & \mod{R} \arrow[ll, "\theta^\ast" description]
\end{tikzcd}
\]
given by restriction of scalars $\theta^*$, extension of scalars $\theta_* = R \otimes_S -$ and coextension of scalars $\theta_! = \Hom_S(R,-)$. This triple of functors is not always Quillen as the following result shows. 

\begin{lem}[{\cite[3.8]{changeofgroups}}]\label{lem:restrictionleftQuillen}
Let $\theta\colon S \to R$ be a map of monoids in a monoidal model category $\C$. The extension of scalars functor $\theta_\ast$ is always left Quillen in the projective model structure. If the unit of $\C$ is cofibrant, then the restriction of scalars functor $\theta^\ast$ is left Quillen in the projective model structure if and only if $R$ is cofibrant as an $S$-module.
\end{lem}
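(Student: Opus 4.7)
The plan is to handle the two parts of the statement separately, using adjointness and the module structure of the projective model structures throughout.

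For the first claim that $\theta_\ast$ is left Quillen, I would argue via its right adjoint $\theta^\ast$. Since fibrations and trivial fibrations in the projective model structure on both $\mod{R}$ and $\mod{S}$ are detected by the forgetful functor to $\C$, and $\theta^\ast$ changes only the scalar action (not the underlying morphism in $\C$), it manifestly preserves fibrations and trivial fibrations. Hence $\theta_\ast$ is left Quillen with no hypothesis needed.

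For the biconditional, first I would record the key observation that when the unit $\1$ of $\C$ is cofibrant, the monoid $R$, being the image of $\1$ under the left Quillen functor $R\otimes -\colon \C \to \mod{R}$, is cofibrant as an $R$-module. The forward direction ($\Rightarrow$) is then immediate: if $\theta^\ast\colon \mod{R}\to \mod{S}$ is left Quillen it preserves cofibrant objects, so $\theta^\ast(R) = R$ is cofibrant as an $S$-module.

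For the converse ($\Leftarrow$), suppose $R$ is cofibrant as an $S$-module. The strategy is to show that $\theta^\ast$ sends a set of generating (trivial) cofibrations of $\mod{R}$ to (trivial) cofibrations in $\mod{S}$, and then invoke cocontinuity. Concretely, the generating (trivial) cofibrations of $\mod{R}$ have the form $R\otimes i$ where $i$ is a generating (trivial) cofibration of $\C$; under $\theta^\ast$ this is the same morphism but viewed as a map of $S$-modules whose structure comes via $\theta\colon S\to R$. Here the main technical input is the pushout-product axiom (or equivalently, the fact that $\mod{S}$ is a $\C$-enriched model category): tensoring with the cofibrant $S$-module $R$ carries (trivial) cofibrations in $\C$ to (trivial) cofibrations in $\mod{S}$. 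Since $\theta^\ast$ is also a left adjoint (to $\theta_!$) and therefore preserves colimits, preservation on generators upgrades to preservation on all (trivial) cofibrations, whence $\theta^\ast$ is left Quillen.

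The main obstacle is the converse direction: one has to set up the argument so that the pushout-product axiom applies in the correct category, being careful that the $S$-module structure on $R\otimes X$ is precisely the one induced by restriction along $\theta$, and that the generating (trivial) cofibrations one is working with are indeed the standard projective ones of the form $R\otimes i$. Once these identifications are in place, the rest is formal adjunction bookkeeping.
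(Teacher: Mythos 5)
The paper itself does not prove this lemma: it is quoted verbatim from \cite[3.8]{changeofgroups}, so there is no in-paper argument to compare against. Judged on its own merits, your proof is correct and complete, and it is essentially the standard argument for this result. The first claim (via $\theta^\ast$ preserving underlying fibrations/trivial fibrations) and the forward implication (via $\1$ cofibrant $\Rightarrow$ $R$ cofibrant as $R$-module, then apply $\theta^\ast$) are both right. For the converse, the two technical facts you invoke are exactly what is needed: that $\mod{S}(\C)$ with the projective model structure is a $\C$-model category in the sense of Schwede--Shipley, so the tensoring $\mod{S}\times\C\to\mod{S}$ satisfies the pushout-product axiom and hence the cofibration $\emptyset\to R$ in $\mod{S}$ tensored with a generating (trivial) cofibration $i$ of $\C$ yields the (trivial) cofibration $R\otimes i$ in $\mod{S}$; and that $\theta^\ast(R\otimes X)\cong (\theta^\ast R)\otimes X$ as $S$-modules, so $\theta^\ast$ sends the generating (trivial) cofibrations $R\otimes i$ of $\mod{R}$ to (trivial) cofibrations of $\mod{S}$. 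Combined with the fact that $\theta^\ast$ preserves colimits (it has the right adjoint $\theta_! = \Hom_S(R,-)$), the usual cofibrantly-generated reduction (e.g.\ \cite[2.1.20]{Hovey99}) gives that $\theta^\ast$ is left Quillen. No gaps.
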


\subsection{Diagrams of model categories}\label{sec:diagrams}
We recall the necessary background from~\cite{GreenleesShipley14b}. 

\begin{defn}
Let $\Dsf$ be a small category. A \emph{diagram of model categories} of shape $\Dsf$ is a collection of model categories $\C(d)$ for each $d \in \Dsf$ together with a left Quillen functor $\C(f)\colon \C(d) \to \C(d')$ for each morphism $f\colon  d \to d'$ in $\Dsf$, 
which are strictly compatible with composition. Equivalently, this is a functor from $\Dsf$ into the category of model categories and left Quillen functors. 
\end{defn}

\begin{defn}\label{defn:leftsections}
Let $\C$ be a diagram of model categories of shape $\Dsf$. The \emph{category of  sections} of $\C$ consists of an object $X_d \in \C(d)$ for each $d \in \Dsf$ together with maps $\C(f)(X_d) \to X_{d'}$ in $\C(d')$ for each morphism $f\colon d \to d'$ in $\Dsf$, which are strictly compatible with composition.
\end{defn}

We make a particular case of interest explicit.

\begin{ex}\label{ex-left section modules}
  Let $T \xrightarrow{\theta} S \xleftarrow{\phi} R$ be a diagram of monoids in a monoidal 
  model category. If we assume that $S$ is cofibrant as an $R$-module,
  then using the projective model structure in all three cases,   
  Lemma~\ref{lem:restrictionleftQuillen} ensures that
  \[
  \mod{T} \xrightarrow{\theta_\ast} \mod{S} \xrightarrow{\phi^\ast} \mod{R}
  \] 
  is a diagram of model categories.
  The category of  sections consists of the following data:
\begin{itemize}
\item a $T$-module $L$;
\item an $S$-module $M$;
\item an $R$-module $N$;
\item an $S$-module map $\theta_\ast L \to M$;
\item an $R$-module map $\phi^\ast M \to N$.
\end{itemize} 

\end{ex}

The category of  sections of $\C$ supports a `diagram injective' model structure under some combinatorial conditions on the indexing category. Recall that $\Dsf$ is called an \emph{inverse category} if, informally, the morphisms in $\Dsf^{\mathrm{op}}$ only `go in one direction'.  
We refer the reader to~\cite[5.1.1]{Hovey99} for the definition, and here instead give two main examples:
\[
\bullet \to \bullet \leftarrow \bullet \quad \mathrm{and} \quad \bullet \to \bullet \to \bullet.
\]

  We have the following result.

\begin{thm}[{\cite[3.1]{GreenleesShipley14b}}]
  Let $\C$ be a diagram of model categories of shape $\Dsf$, where $\Dsf$ is an inverse category. 
  There is a diagram injective model structure on the category of  sections of $\C$ 
  with weak equivalences and cofibrations determined objectwise. Moreover if each $\C(d)$ is 
  cellular and proper then so is the diagram injective model structure on $\C$.
\end{thm}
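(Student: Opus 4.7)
The plan is to adapt the standard Reedy-style construction of the injective model structure on inverse diagrams to the setting where the arrows of $\Dsf$ act by left Quillen functors rather than by identities. Fix a degree function on $\Dsf$ so that non-identity morphisms strictly increase degree. For each morphism $f\colon d \to d'$ let $\C(f)^R$ denote the right adjoint of $\C(f)$. Given a section $X$, define its \emph{matching object} at $d$ as
$$M_d X = \lim_{(f\colon d \to d') \in \partial(d\downarrow \Dsf)} \C(f)^R X_{d'},$$
computed in $\C(d)$, and observe that the section structure, read through the adjunctions, supplies a canonical comparison map $X_d \to M_d X$. Thus every morphism $X \to Y$ in sections carries a \emph{relative matching map} $X_d \to Y_d \times_{M_d Y} M_d X$ at each $d$. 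I would then declare weak equivalences and cofibrations to be objectwise, and fibrations to be those morphisms whose relative matching map at every $d$ is a fibration in $\C(d)$.

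Limits and colimits in the category of sections exist and are computed objectwise, since each $\C(d)$ is complete and cocomplete. The factorization and lifting axioms are established by induction on degree: having constructed the required data at all objects of degree $<\alpha$, one produces its component at an object of degree $\alpha$ by applying the corresponding factorization or lifting in $\C(\alpha)$ to the relative matching map there, and this extends uniquely to a map of sections. The main subtlety — and the step I expect to be the most delicate — is reconciling ``objectwise trivial fibration'' with ``relative matching map is a trivial fibration at each $d$''; this requires the matching construction to be homotopically meaningful, which depends on controlling the behaviour of the right adjoints $\C(f)^R$ on fibrant pieces of the diagram via the inductive hypothesis, together with a standard two-out-of-three argument applied to the matching tower.

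For the moreover statement, left properness is immediate since pushouts, cofibrations, and weak equivalences are all computed objectwise. Right properness follows from right properness of each $\C(d)$ once one observes that sectionwise fibrations project to fibrations in each $\C(d)$, and pullbacks are objectwise. Cellularity is obtained by exhibiting generating cofibrations and generating trivial cofibrations of the form $F_d(i)$, where $i$ ranges over the corresponding generators in $\C(d)$ and $F_d$ is the left adjoint to evaluation at $d$; smallness of domains, stability under transfinite composition, and the effective monomorphism and compact generation conditions then transfer from the analogous properties in each $\C(d)$.
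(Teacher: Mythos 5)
The paper does not prove this statement; it is cited verbatim from Greenlees--Shipley \cite[3.1]{GreenleesShipley14b}, whose argument is indeed the Reedy-style induction on a degree function with matching objects built from the right adjoints $\C(f)^R$, exactly as you propose. Two points in your write-up need fixing.

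First, a minor slip: with Hovey's conventions (which the paper points to), $\Dsf$ is inverse when non-identity morphisms strictly \emph{decrease} degree. Your induction ``having constructed at degree $<\alpha$, produce at degree $\alpha$'' requires $M_dX = \lim_{(f\colon d\to d')} \C(f)^R X_{d'}$ to depend only on lower-degree objects $d'$, and since the limit is over maps \emph{out of} $d$, this forces morphisms to lower degree. As written (``strictly increase degree''), the induction does not close.

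Second, and more seriously, your description of the generating sets for the cellularity claim is wrong. If $F_d$ is the left adjoint to evaluation at $d$, then a map of sections $X\to Y$ has the RLP against $\{F_d(i)\}$ if and only if each $X_d \to Y_d$ has the RLP against $i$; thus $\mathrm{RLP}(\{F_d(i_d)\}_{i_d\in I_d})$ is the class of \emph{objectwise} trivial fibrations. But the trivial fibrations of the diagram injective model structure are strictly smaller: they are the maps whose relative matching maps $X_d \to Y_d\times_{M_dY} M_dX$ are trivial fibrations. Already for $\Dsf = (a\to b)$ with $\C(a)=\C(b)=\Ch(k)$ and $\C(a\to b)=\mathrm{id}$, the map $(0,D^1,0)\to(0,0,0)$ is an objectwise trivial fibration but has relative matching map $0\to D^1$ at $a$, which is not even a fibration, and the objectwise cofibration $(0,S^0,0)\to(S^0,S^0,\mathrm{id})$ fails to lift against it. So $\{F_d(i_d)\}$ does not generate the cofibrations (and likewise $\{F_d(j_d)\}$ only detects objectwise fibrations, not the Reedy ones). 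The correct generating sets must encode the matching condition, e.g.\ via a left adjoint to the functor $X \mapsto (X_d \to M_dX)$ valued in the arrow category, as in Hirschhorn's treatment of cofibrant generation for Reedy model structures; the cellularity argument needs to be rerun with those generators.
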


\begin{nota}\label{nota:dimp}
  We will often consider diagrams of module categories as in 
  Example~\ref{ex-left section modules}. The category of  sections admits a \texttt{dimp} 
  (diagram injective, module projective) model structure and a \texttt{dimi} 
  (diagram injective, module injective) model structure.
\end{nota} 

\section{Tensor-triangulated categories and their enhancements}
\subsection{Balmer spectrum}

  We recall the necessary background from~\cite{Balmer05}.
  
\begin{defn}
  A \emph{tensor triangulated category} consists of a triangulated category $\T$ and a 
  symmetric monoidal tensor product $\otimes \colon \T \times \T \to \T$ which is exact in 
  each variable. We additionally assume that $\T$ is closed symmetric monoidal and 
  denote the internal hom by $\underline{\mathrm{Hom}}$.
\end{defn}

  A full subcategory of $\T$ is \emph{thick} if it is closed under suspensions, completing 
  triangles and taking retracts. It is \emph{localizing} if it is thick and closed under arbitrary sums.
  Finally, it is is an \emph{ideal} if it is closed under tensoring with 
  arbitrary objects of $\T$. For a set of objects $\K$ in $\T$, we
  write $\mathrm{Thick}(\K)$ for the smallest thick subcategory of
  $\T$ containing $\K$, and $\mathrm{Loc}(\K)$ for the smallest
  localizing subcategory of $\T$ containing $\K$. We write
  $\mathrm{Thick}_\otimes(\K)$  for the smallest thick subcategory of
  $\T$ containing $\K$ and closed  under tensoring with compact objects. We write
$\mathrm{Loc}_\otimes(\K)$ for the smallest localizing subcategory of
$\T$ containing $\K$ and closed under tensoring with arbitrary
objects. If $\1$ is a compact generator for $\T$ then $\thick(\K)=\tthick(\K)$ and
$\loc (\K)=\tloc(\K)$.
 
  To avoid set theoretical issues we will restrict to 
  essentially small tensor triangulated categories. Concretely, this means that given a  
  tensor triangulated category $\T$, we will restrict to the full subcategory 
  $\T^{\omega}$ of compact objects; that is, those objects $X$ of $\T$ for which the functor 
  $[X,-]$ preserves arbitrary sums. Recall that an object $X$ of $\T$ is said to be dualizable if the natural map $\underline{\Hom}(X,\1) \otimes Y \to \underline{\Hom}(X,Y)$ is an isomorphism for all $Y \in \T$.  In general $\T^{\omega}$ will not always be essentially 
  small; however, it is when $\T$ is \emph{rigidly-compactly generated}. This means that $\T$ has 
  a set of compact and dualizable objects $\G$ that generates the whole category in the sense that $\mathrm{Loc}(\G) = \T$, and the monoidal unit $\1$ is compact. It follows that the compact objects and the dualizable objects in $\T$ coincide, see~\cite[2.1.3(d)]{HoveyPalmieriStrickland97}.
  
  We list a few key examples of rigidly-compactly generated tensor 
  triangulated categories. 
  
\begin{exs}\leavevmode
  \begin{itemize}
   \item[(a)] The derived category $\Dsf(R)$ of a commutative ring is a tensor triangulated 
   category with respect to the (derived) tensor product of $R$-modules. 
   It is compactly generated by $R$, and the 
   full subcategory of compact objects coincides with that of perfect complexes, i.e., 
   those complexes that are quasi-isomorphic to a bounded complex of finitely generated 
   projectives.
   \item[(b)] The stable homotopy category $h\Sp$ is a tensor triangulated category with 
   respect to the (derived) smash product. 
   It is compactly generated by the sphere spectrum $S^0$, and 
   the full subcategory of compact objects coincides with that of finite spectra, that is, 
   spectra of the form $\Sigma^{n}A$ where $A$ is a homotopy retract of a finite based $CW$-complex and $n$ is an 
   integer.
   \item[(c)] For $G$ a compact Lie group, the equivariant stable homotopy category $h\Sp_G$ is a tensor triangulated 
   category with respect to the (derived) smash product. It is compactly generated by 
   the set of cosets $G/H_+$ as $H$ ranges over the closed subgroups
   of $G$. The compact objects are those of the form $S^{-V} \wedge
   \Sigma^{\infty}A$ for $V$ a $G$-representation and $A$ a homotopy
   retract of a based finite $G$-CW
   complex~\cite[I.4.7]{LewisMaySteinbergerMcClure}. One category of
   particular interest in this paper comes from considering the
   rational version of this category, that is, the category obtained
   by localizing with respect to the rational equivalences.
   \item[(d)] Let $G$ be a finite group and $k$ be a field with characteristic dividing the order of $G$. The stable module category $\mathrm{StMod}(kG)$ is a tensor 
   triangulated category with respect to the tensor product $- \otimes_k -$ with diagonal $G$-action. It is compactly 
   generated by the simple modules, and the full subcategory of compact objects coincides 
   with the class of finitely generated $kG$-modules.
  \end{itemize}
\end{exs}
 
\begin{defn}\label{defn:support}
  Let $\T$ be a tensor triangulated category.
  \begin{itemize}
  \item
	A \emph{prime ideal} is a proper thick ideal $\p$ with the property that 
	\[
	x \otimes y \in \p \Rightarrow x \in \p \; \text{or}\; y \in \p.
	\] 
  \item The \emph{Balmer spectrum} $\spcc (\T)$ is the set of all 
	prime ideals of $\T^{\omega}$. 
  \item We endow the Balmer spectrum with a \emph{Zariski topology} generated by the closed 
    sets 
	\[
	\supp(X) = \{\p \in \spcc (\T) \mid X \not\in \p\}
	\]
    where $X$ ranges over the compact objects of $\T$. In particular, the 
    \emph{specialization closure} of a prime $\p$ is given by 
	$\overline{\{\p\}} = \{\q \mid \q \subseteq \p\}.$
\end{itemize}
\end{defn}

We recall the following important result that we will use repeatedly throughout.

\begin{prop}[{\cite[2.6]{Balmer05}}]\label{prop:properties_support}
 Let $\T$ be a rigidly-compactly generated tensor triangulated category.
  The support of compact objects of $\T$ has the following properties:
	\begin{itemize}
	\item[(a)] $\supp(0)= \emptyset$ and $\supp(\mathbbm{1})=
          \spcc (\T)$;
	\item[(b)] $\supp(X \oplus Y)= \supp(X) \cup \supp(Y)$;
	\item[(c)] $\supp(\Sigma X)= \supp(X)$;
	\item[(d)] $\supp(Y) \subseteq \supp(X) \cup \supp(Z)$ for any distinguished triangle 
	$X\to Y\to Z\to\Sigma X$;
	\item[(e)] $\supp(X \otimes Y)= \supp(X) \cap \supp(Y)$. 
	\end{itemize}
  Moreover, two compact objects have equal support if and only if they generate the same 
  thick ideal. In particular, if an object has empty support it is equivalent to the zero object.
\end{prop}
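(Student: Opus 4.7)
The plan is to verify each property directly from the definition $\supp(X)=\{\p\in\spcc(\T)\mid X\notin\p\}$, using only that each prime $\p$ is a proper thick tensor ideal satisfying $X\otimes Y\in\p\Rightarrow X\in\p$ or $Y\in\p$. Items (a)--(d) do not use the prime condition. For (a), every thick subcategory contains the zero object so $\supp(0)=\emptyset$; conversely $\1\in\p$ would force $X\simeq\1\otimes X\in\p$ for all $X$, contradicting properness, so $\supp(\1)=\spcc(\T)$. Properties (b) and (c) record closure of $\p$ under retracts (so $X\oplus Y\in\p$ if and only if both $X,Y\in\p$) and under (de)suspension, while (d) is the contrapositive of closure under completing triangles: if $X,Z\in\p$ then $Y\in\p$.

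Property (e) is the only point at which the prime condition is genuinely needed. The inclusion $\supp(X\otimes Y)\subseteq\supp(X)\cap\supp(Y)$ is immediate from the ideal axiom, since $X\in\p$ or $Y\in\p$ forces $X\otimes Y\in\p$. The reverse inclusion is precisely the contrapositive of primeness: if $X\notin\p$ and $Y\notin\p$ then $X\otimes Y\notin\p$.

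For the ``moreover'' clause, I would appeal to Balmer's key structural lemma: for any radical thick tensor ideal $\mathcal{J}\subseteq\T^\omega$ and any compact $X$, one has $X\in\mathcal{J}$ if and only if $X\in\p$ for every prime $\p\supseteq\mathcal{J}$. Applied to $\mathcal{J}=\sqrt{\tthick(X)}$ this yields $\sqrt{\tthick(X)}=\bigcap\{\p\mid X\in\p\}$, and the right hand side depends only on $\supp(X)$. Hence $\supp(X)=\supp(Y)$ forces the same thick tensor ideal to be generated (up to radical); the converse is clear from (b)--(e) since $\supp$ is constant on $\tthick(X)$. The final sentence of the proposition is the special case $\mathcal{J}=0$: empty support means $X$ lies in every prime, hence in $\bigcap\p=0$, so $X\simeq 0$. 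The main obstacle is the structural lemma itself, whose proof requires a Zorn's-lemma argument to enlarge any thick tensor ideal avoiding every tensor power $X^{\otimes n}$ to a maximal such ideal, and then to verify that this maximal element is automatically prime.
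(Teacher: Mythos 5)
The paper gives no proof of its own for this proposition---it is stated as a direct citation of Balmer's foundational result~\cite[2.6, 2.9]{Balmer05}---so there is no internal argument to compare against. Your reconstruction is correct and follows the same route as Balmer's original: parts (a)--(d) are bookkeeping with the axioms of a thick subcategory, (e) is exactly where primeness enters (via its contrapositive), and the ``moreover'' clause rests on Balmer's structural lemma that a radical thick tensor ideal equals the intersection of the primes containing it, which is proved by the Zorn's-lemma argument you sketch.

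One small remark worth adding in the present context: you hedge by saying supports determine the generated thick ideal only ``up to radical,'' but the paper works throughout with rigidly-compactly generated categories, in which every object of $\T^\omega$ is dualizable. In the rigid setting every thick tensor ideal of compacts is automatically radical (if $a^{\otimes n}\in\J$ then $a\in\J$), so $\sqrt{\tthick(X)}=\tthick(X)$ and the ``moreover'' clause holds on the nose, as stated. Likewise the final sentence goes through without any nilpotence caveat: empty support means $X$ lies in the radical of the zero ideal, which in the rigid case is just~$\{0\}$.
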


  We will only consider Balmer spectra which are \emph{Noetherian} spaces, that is those 
  which satisfy the ascending chain condition for open subsets.
  By~\cite[7.14]{BalmerFavi11}, the Balmer spectrum is Noetherian if and only if for each 
  prime $\p$ there exists a compact object $K_\p$ in $\T$ such that 
  $ \supp(K_\p) = \overline{\{\p\}}$.  
  We will call $K_\p$ a \emph{Koszul object for $\p$}.
  
  The \emph{dimension} of a prime is the length of a maximal length chain of 
  primes down to a closed point (= minimal prime). 
  The dimension of the Balmer spectrum is the supremum of the
  dimensions of all primes. We note that such a chain always exists as the space is
  spectral~\cite{BuanKrauseSolberg07}.

\subsection{Sufficiently nice model categories}\label{sec:models}
Our methods require that our tensor-triangulated category is the
homotopy category of a suitable stable monoidal model category.
Many of the finer details will be omitted since they are not relevant
for the remainder of the paper, but we recall essentials from~\cite[\S 4]{BalchinGreenlees}.

Let $\C$ be a model category. The first requirement is that the
homotopy category $h\C$ is tensor-triangulated. If $\C$ is a stable,
symmetric monoidal model category this is true~\cite[4.3.2,~6.6.4,~\S7.1]{Hovey99}. We will moreover assume that the model structure on $\C$ satisfies the \emph{monoid axiom} which ensures that categories of modules over monoids in $\C$ support a projective model structure, see~\cite{SchwedeShipley00}. Next, we need to be able to perform left Bousfield localizations and cellularizations (i.e., right Bousfield localizations). In order to do this, we assume that $\C$ is proper and cellular. It then follows by~\cite[4.1.1,~5.1.1]{Hirschhorn03} that left Bousfield localizations at sets of maps, and cellularizations at sets of objects exist. 

In order to ensure that the homotopy category $h\C$ is rigidly-compactly generated, we assume that $\C$ is compactly generated in the sense of~\cite[1.2.3.4]{ToenVezzosi08}, has unit $\1$ cofibrant and cell-compact, and has generating cofibrations of the form
$S^n \otimes \mathcal{G} \to \Delta^{n+1}\otimes \mathcal{G}$
where $\mathcal{G}$ is a suitable set of cell-compact cofibrant objects which will become the compact generators of $h\C$.
The fact that these assumptions guarantee that $h\C$ is rigidly-compactly generated follows from~\cite[1.2.3.7,~1.2.3.8]{ToenVezzosi08}. These conditions also ensure that homological localizations exist~\cite[\S 6.A]{BalchinGreenlees}.

\begin{defn}
We say that a stable symmetric monoidal model category $\C$ is \emph{rigidly-compactly generated} if it satisfies the above conditions. 
\end{defn}

In summary we have the following.

\begin{thm}
Let $\C$ be a rigidly-compactly generated model category, then its homotopy category $h\C$ is a rigidly-compactly generated tensor-triangulated category.
\end{thm}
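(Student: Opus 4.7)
The plan is essentially a bookkeeping exercise: verify each clause of the definition of a rigidly-compactly generated tensor-triangulated category from the corresponding hypothesis placed on $\C$ in Section~\ref{sec:models}, invoking the cited results from Hovey and Toën--Vezzosi.

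First I would establish the tensor-triangulated structure on $h\C$. Since $\C$ is stable, \cite[7.1.6]{Hovey99} endows $h\C$ with a triangulated structure. Since $\C$ is symmetric monoidal and satisfies the monoid axiom, the derived tensor product exists and descends to a symmetric monoidal product on $h\C$, and it is exact in each variable by \cite[4.3.2,~6.6.4]{Hovey99}. Together these give $h\C$ the structure of a tensor-triangulated category in the sense of Definition~\ref{defn:support}.

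Next, I would verify the rigid-compact generation clauses. The unit $\1$ is assumed cofibrant and cell-compact in $\C$, so it represents a compact object in $h\C$. The generating cofibrations are of the form $S^n\otimes \G \to \Delta^{n+1}\otimes \G$ for a set $\G$ of cell-compact cofibrant objects; by \cite[1.2.3.7]{ToenVezzosi08} each element of $\G$ is compact in $h\C$ and the family $\G$ generates $h\C$ as a localizing subcategory, i.e.\ $\loc(\G) = h\C$. Rigidity of the generators, meaning that every element of $\G$ is dualizable in $h\C$, is exactly the content of \cite[1.2.3.8]{ToenVezzosi08} applied to the same set of generators. Together with the compactness of $\1$, this shows that $h\C$ is rigidly-compactly generated as required.

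No genuinely new argument is involved; the only subtlety to be careful about is matching the model-theoretic notions (cell-compactness, the very specific form of the generating cofibrations) with their homotopy-categorical counterparts (compactness and dualizability). That translation is precisely what is packaged in the Toën--Vezzosi references, which is why the hypotheses in Section~\ref{sec:models} were designed the way they were. The \emph{main obstacle}, to the extent there is one, is simply checking that the hypotheses listed in the definition of a rigidly-compactly generated model category are exactly what is needed as input for the Toën--Vezzosi machinery, rather than verifying any of the clauses from scratch.
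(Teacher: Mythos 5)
Your proposal follows exactly the same route as the paper: the paper offers no separate proof beyond its preceding paragraph, which simply assembles the hypotheses in the definition and cites \cite[4.3.2,~6.6.4,~\S7.1]{Hovey99} for the tensor-triangulated structure and \cite[1.2.3.7,~1.2.3.8]{ToenVezzosi08} for rigid-compact generation. One small imprecision worth noting: the monoid axiom is not what makes the tensor product derivable (that is the pushout-product axiom, part of being a monoidal model category); rather the paper invokes the monoid axiom so that module categories over monoids in $\C$ admit projective model structures, which is needed later but not for this statement.
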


As already alluded to in the previous section, we do not wish to consider all tensor-triangulated categories, but only those whose Balmer spectrum has a particularly tame form. This leads us to the final definition of this section.

\begin{defn}
A rigidly-compactly generated model category $\C$ is said to be a \emph{finite-dimensional Noetherian} model category if $\spcc(h\C)$ is a finite-dimensional Noetherian space.
\end{defn}

 We refer to Section~\ref{sec-example} for a discussion of some examples of finite-dimensional Noetherian model 
  categories.

\section{Localization, completion and support}\label{sec:functors}

\subsection{Localization, completion and support}
First we define localization, torsion and completion functors, and use them to define support. Additional details can be found in~\cite{BalchinGreenlees, Stevenson18}.

   Let $\C$ be a finite-dimensional Noetherian model category, and write $\G$ for a set of compact and dualizable generators. By the Noetherian
   condition, for each prime $\p$ we may choose a Koszul object
   $K_\p$ (i.e., 
   compact and with support $\overline{\{ \p\}}$). These are not
   canonical, but any two generate the same thick tensor ideal, so that
   constructions below will not depend on this choice. The theory of
   supports shows 
$$\p =\tthick (K_{\q}\mid \p \not \subseteq \q). $$

Given a collection of prime ideals $\CS$ we let $\K(\CS)$ denote a set of Koszul 
   objects for $\CS$, i.e., $   \K(\CS)= \{ K_\p \mid \p \in \CS \}. $
A collection of prime ideals $\CS$ is said to be \emph{a family}  if $\p
\in \CS$ and $\q \subseteq \p$ implies that $\q \in \CS$,
and \emph{cofamily}  if  $\p \in \CS$ and $\p \subseteq \q$ implies that $\q \in \CS$. 

\begin{rem}
We have adopted the terminology from equivariant topology for brevity.
The standard geometric terminology is that a family is `a specialization closed
set' and that a cofamily is `a generalization closed set'. 
It is helpful to observe that a  Zariski closed set is a family,  and a Zariski open
set is a cofamily.\end{rem}

In particular, we will consider the cones below and above a fixed prime 
  $\p$: 
  \[
  \downcl(\p)= \{ \q \mid \q \subseteq \p \} \quad \mathrm{and} \quad 
  \upcl(\p)= \{\q \mid \p \subseteq \q \}. 
  \]
 By definition $\downcl(\p)$ is a family  and
 $\upcl(\p)$ is a cofamily.  We note that $\downcl(\p)$ is the specialization closure of 
 $\{\p\}$.
 
 Given a family $V$, we may define associated torsion, localization and completion functors, 
 which are independent of the choice of Koszul objects $\K(V)$.
\begin{defn}
  Let $V$ be a family with complement $V^c$ the associated cofamily. 
  \begin{itemize}
   \item The \emph{torsion functor} $\Gamma_{V}$ is the
     cellularization at $\K(V) \otimes \G$. 
   \item The \emph{localization functor} $L_{V^c}$ is the nullification of $\K(V)$.
 \item The \emph{completion functor} $\Lambda_{V}$ is  the homological 
   localization at $\bigoplus_{\p \in V}K_\p$.
  \end{itemize}
\end{defn}

\begin{rem}\leavevmode
\begin{itemize}
\item[(a)] The functors $\Gamma_{\downcl(\p)}$, $\Lambda_{\downcl(\p)}$ and $L_{\upcl(\p)}$ play a particularly important role. We note that the definitions of these can be simplified as follows, by the theory of supports:
\begin{itemize}
\item $\Gamma_{\downcl(\p)}$ is the cellularization at $K_\p \otimes \G$;
\item $L_{\upcl(\p)}$ is the nullification of $K_\p$;
\item $\Lambda_{\upcl(\p)}$ is the homological localization at $K_\p$.
\end{itemize}
\item[(b)] The apparent asysmmetry between $\Gamma_V$ and $\Lambda_V$ is due to
the fact that Bousfield localization is defined in terms of an
enrichment in spaces (i.e., $\map(K_\p, X)$ is a space rather than an
object of $\C$), whereas that of a homological localization is defined
in terms of the tensor product, and $K_\p\tensor X$ is an object of
$\C$.
\item[(c)] We remark that \cite[6.7]{BalchinGreenlees} is therefore incorrect unless the category is compactly generated by its tensor unit, and should instead
refer to cellularization at $K_\p \otimes \G$.
\item[(d)] Since the homotopy category of a cellularization at a set of compact objects $\K$ is the localizing subcategory generated by $\K$ (see~\cite[2.5, 2.6]{GreenleesShipley13}), one sees that the homotopy category of $V$-torsion objects is the localizing tensor ideal generated by $\K(V)$. This puts us in the context of~\cite{Greenlees01b}. 
\end{itemize}
\end{rem}

Using these functors we can define a notion of support for non-compact objects following~\cite{BensonIyengarKrause08}. 
\begin{defn}
We define the \emph{support} of on object $M \in \C$
by $$\mathrm{supp}(M) = \{ \p \in \spcc (h\C) \mid \Gamma_{\downcl(\p)} L_{\upcl(\p)} M \not\simeq 0\}.$$
\end{defn}
Recall the notion of support for compact objects from Definition~\ref{defn:support}. We record the following key facts.

\begin{prop}[{\cite[7.17]{BalmerFavi11},~\cite[6.9(ii)]{Stevenson2013}}]
Let $\C$ be a finite-dimensional Noetherian model category.  The support defined above satisfies the following properties:
\begin{itemize}
\item[(a)] The two notions of support coincide for any compact objects of $\C$. 
\item[(b)] The support defined above satisfies the following properties:
	\begin{enumerate}
		\item For any set of objects $\{X_i\}_{i \in I}$ in $\C$ we have $\mathrm{supp}(\oplus_{i \in I} X_i) = \cup_{i \in I} \mathrm{supp}(X_i).$
		\item For all $X \in \C$ we have $\mathrm{supp}(\Sigma X) = \mathrm{supp}(X)$.
\item For any $X,Y \in \C$ there is an inclusion $\mathrm{supp}(X \otimes Y) \subseteq \mathrm{supp}(X) \cap \mathrm{supp}(Y)$. This is an equality if $X$ and $Y$ are compact objects.
		\item Given a distinguished triangle $X \to Y \to Z \to \Sigma X$ in $\C$ then there is an inclusion $\mathrm{supp}(Y) \subseteq \mathrm{supp}(X) \cup \mathrm{supp}(Z)$.
	\end{enumerate}
\item[(c)] An object has empty support if and only if it is zero.
\end{itemize}
\end{prop}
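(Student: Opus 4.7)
The plan is to address parts (a), (b), (c) in sequence. The key structural observation underlying all three is that for each prime $\p$ the functor $\Gamma_\p L_\p \colon \C \to \C$ is a triangulated and coproduct-preserving endofunctor, being the composite of a Bousfield nullification and a cellularization, each performed at a set of compact objects. In particular, the class of $M$ for which $\Gamma_\p L_\p M \simeq 0$ is a localizing subcategory of $\C$.

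For part (a), I would compare the two supports on a compact object $M$ by computing directly. Since $L_\p$ is nullification at the thick ideal $\p$, for compact $M$ Balmer's classification of thick ideals (valid in the Noetherian setting) gives $L_\p M \simeq 0$ if and only if $M \in \p$, equivalently $\p \notin \supp_{\mathrm{Bal}}(M)$. When $\p \in \supp_{\mathrm{Bal}}(M)$, nonvanishing of the further cellularization $\Gamma_\p L_\p M$ follows by detecting it with maps from the Koszul object $K_\p$: the resulting compact witness has Balmer support equal to $\overline{\{\p\}} \cap \{\q : \p \subseteq \q\} \cap \supp_{\mathrm{Bal}}(M) = \{\p\}$, and is therefore nonzero by Proposition~\ref{prop:properties_support}.

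For part (b), items (1), (2), and (4) follow immediately from the fact that $\Gamma_\p L_\p$ is exact and coproduct-preserving: a coproduct is zero iff each summand is, suspension commutes with $\Gamma_\p L_\p$, and in a triangle the middle term vanishes if the outer two do. Property (3) rests on the tensor compatibility of $\Gamma_\p L_\p$: one checks that $\Gamma_\p L_\p(X \otimes Y)$ vanishes whenever either $\Gamma_\p L_\p X$ or $\Gamma_\p L_\p Y$ does, by factoring $\Gamma_\p L_\p$ through tensoring with a local object and using symmetry in $X$ and $Y$. The equality on compact objects is obtained by combining part (a) with Proposition~\ref{prop:properties_support}(e).

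Part (c) is the main obstacle, as it is the detection principle forcing $M \simeq 0$ from the vanishing of every $\Gamma_\p L_\p M$. Here the finite-dimensional Noetherian hypothesis is essential. The plan is an induction on the dimension of $\spcc(h\C)$ using the fibre sequences $\Gamma_V M \to M \to L_V M$ attached to the specialization-closed subsets in the dimension filtration $\spcc(h\C)_{\leq -1} \subseteq \spcc(h\C)_{\leq 0} \subseteq \cdots$ from the introduction. The key input is that for each $s$, the subquotient functor controlling the stratum of primes of exact dimension $s$ splits as a direct sum of the functors $\Gamma_\p L_\p$ over those primes $\p$ (using the Noetherian condition and the fact that distinct primes of equal dimension are incomparable). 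Vanishing of all $\Gamma_\p L_\p M$ therefore forces each stratum to contribute trivially, so $\Gamma_V M \simeq 0$ at every stage, and taking $V = \spcc(h\C)$ yields $M \simeq 0$.
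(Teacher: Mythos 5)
Note first that the paper provides no proof of this statement; it is cited directly from \cite[7.17]{BalmerFavi11}, so there is no in-paper argument to compare against. Your sketch for parts (a) and (b) is essentially sound, modulo one slip in (a): the object $K_\p \otimes L_\p M$ is not compact, so it cannot serve as your ``compact witness''. The clean version is that $K_\p \otimes L_\p M \simeq L_\p(K_\p \otimes M)$ by smashingness of $L_\p$; since $K_\p \otimes M$ \emph{is} compact with $\p$ in its Balmer support, it does not lie in $\p$ and hence $L_\p(K_\p\otimes M)\neq 0$ by the Neeman--Thomason localization theorem; finally, $\Gamma_\p K_\p \simeq K_\p$ together with $\Gamma_\p$ being smashing gives $K_\p\otimes\Gamma_\p L_\p M \simeq K_\p\otimes L_\p M \neq 0$, which forces $\Gamma_\p L_\p M \neq 0$. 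Items (b)(1)--(4) do follow as you say from $\Gamma_\p L_\p$ being exact, smashing, and coproduct-preserving.

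The genuine gap is in (c). The ``key input'' you invoke --- that the subquotient at primes of exact dimension $s$ splits as $\bigoplus_{\p_s}\Gamma_{\p_s}L_{\p_s}$ --- is exactly the content of the paper's Proposition~\ref{prop-splittinf tors and compl} and the corollary following it. Those results appear \emph{after} the present Proposition and their proof \emph{uses} part (c): the splitting argument there passes through the step ``the support of $\Gamma_{\p_0}K_{\q_0}$ is empty, hence it is zero'', which is precisely the detection principle you are trying to establish. So your induction is circular as written. The circularity can be broken: for incomparable $\p,\q$ of dimension $s$, the compact object $K_\p\otimes K_\q$ has Balmer support $\overline{\{\p\}}\cap\overline{\{\q\}}$ lying strictly below dimension $s$, so by the classification of thick tensor ideals in the compact category, $\Gamma_\p\1\otimes\Gamma_\q\1$ is $V_{s-1}$-torsion; combined with the inductive hypothesis $\Gamma_{V_{s-1}}M=0$, this forces $\Gamma_\q\Gamma_\p M\simeq 0$, which is exactly what the splitting requires at this stage. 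This extra argument --- deriving the splitting from Balmer's support theory for \emph{compact} objects alone rather than from a version of (c) --- is the heart of the matter, and your sketch elides it.
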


  It follows from the definition of the torsion and localization functors, that for any object $X$ of $\C$, there is a natural 
  cofibre sequence
   \[
  \Gamma_{V}X \to X \to L_{V^c}X
  \]
  with
  \[
  \supp(\Gamma_{V}X)= \supp(X) \cap V \quad \mathrm{and} \quad 
  \supp(L_{V^c}X)= \supp(X) \, \backslash V.
  \]  

   \begin{ex}\label{eg:family dim less than}
 Let $V$ consist of the primes of dimension less than or equal to $n$. In
 this case we write $\Gamma_V = \Gamma_{\leq n}$, $L_{V^c} = L_{\geq n+1}$
 and $\Lambda_V = \Lambda_{\leq n}.$ The notation is chosen as such
 for the effect on the supports; for example $\supp(\Gamma_{\leq
   n}\1)$ is the collection of primes of dimension less than or equal
 to $n$ and $\supp(L_{\geq n+1}\1)$ is the collection of primes of
 dimension greater than or equal to $n+1$. 
 \end{ex}

\subsection{Properties of the functors}

The following proposition records key facts and properties which we will use throughout this paper.
\begin{prop}\leavevmode\label{prop-tate cohomology functors} Let $V$ be a family.
\begin{itemize}
		\item[(a)] The functors $L_{V^c}$ and $\Lambda_V$ preserve monoid objects as endofunctors on $\C$. 
		\item[(b)]	The functors $\Gamma_V$ and $L_{V^c}$
                  are smashing (i.e., $\Gamma_V X \simeq \Gamma_V \1
                  \otimes X$ and $L_{V^c}X \simeq L_{V^c}\1 \otimes X$), but $\Lambda_V$ is usually not smashing.
		\item[(c)] If $R$ denotes fibrant replacement in $\C$, then 
		$\underline{\Hom}(\Gamma_V \1,RX)\simeq \Lambda_V X$.
		\item[(d)] The natural maps $\Gamma_V X \to \Gamma_V\Lambda_V X$ and $\Lambda_V\Gamma_V X \to \Lambda_V X$ are equivalences for all $X$. We call this the \emph{MGM equivalence}.
	\end{itemize}
\end{prop}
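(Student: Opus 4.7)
My plan is to establish the four parts in the order (b), (a), (c), (d), since the smashing property of (b) will power the arguments for the rest. The key technical tools are the support calculus of Proposition~\ref{prop:properties_support} and the identification noted after the definition of $\Gamma_\p$ that the homotopy category of a cellularization at a compact set coincides with the localizing subcategory it generates. For (b), $\Gamma_V$ is cellularization at the compact set $\K(V)\otimes\G$, so its torsion objects form $\loc(\K(V)\otimes\G)=\tloc(\K(V))$ (using that $\G$ generates $\T$). Being generated by compact objects, $L_{V^c}$ is smashing, and hence so is its fibre $\Gamma_V$; smashing the cofibre sequence $\Gamma_V\1\to\1\to L_{V^c}\1$ with any $X$ recovers $\Gamma_V X\to X\to L_{V^c}X$. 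The non-smashing nature of $\Lambda_V$ is witnessed by $p$-completion in $\Dsf(\Z_{(p)})$: were $\Lambda_p$ smashing, $\Lambda_p\Q\simeq\bZp\otimes\Q$ would be nonzero, but $K_{(p)}\otimes\Q\simeq 0$ forces $\Lambda_p\Q\simeq 0$. For (a), smashing-ness of $L_{V^c}$ gives immediate monoid preservation via $L_{V^c}R=L_{V^c}\1\otimes R$; for $\Lambda_V$, the class of $\bigoplus_\p K_\p$-equivalences is closed under tensoring with arbitrary objects (since $(K_\p\otimes f)\otimes Y$ is an equivalence whenever $K_\p\otimes f$ is), which is the standard criterion for a homological localization to lift to monoids.

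For (c), I would verify that $\Hom(\Gamma_V\1, RX)$ has both defining properties of $\Lambda_V X$. Locality: if $A$ is $\Lambda_V$-acyclic then $K_\p\otimes A\simeq 0$ for all $\p\in V$, whence $\Gamma_V\1\otimes A\simeq 0$ (since the subcategory $\{B:B\otimes A\simeq 0\}$ is a localizing tensor ideal containing $\K(V)$), so $\Map(A,\Hom(\Gamma_V\1, RX))\simeq \Map(\Gamma_V\1\otimes A, RX)\simeq \ast$. For the equivalence claim, the fibre sequence $\Hom(L_{V^c}\1, RX)\to X\to \Hom(\Gamma_V\1, RX)$ reduces matters to showing $\Hom(L_{V^c}\1, RX)$ is $\Lambda_V$-acyclic; using dualizability of $K_\p$ this identifies, upon smashing with $K_\p$, with $\Hom(DK_\p\otimes L_{V^c}\1, RX)$, and $\supp(DK_\p\otimes L_{V^c}\1)\subseteq \overline{\{\p\}}\cap V^c = \emptyset$ forces the vanishing.

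Part (d) then follows formally. For the first MGM equivalence, the completion map $X\to \Lambda_V X$ is by construction a $\bigoplus_\p K_\p$-equivalence, so remains an equivalence after smashing with $\Gamma_V\1$, which by (b) is precisely $\Gamma_V X\to\Gamma_V\Lambda_V X$. For the second, applying $\Lambda_V$ to the cofibre sequence $\Gamma_V X\to X\to L_{V^c}X$ reduces matters to showing $\Lambda_V L_{V^c}X\simeq 0$; since $L_{V^c}$ is smashing, $K_\p\otimes L_{V^c}X\simeq K_\p\otimes L_{V^c}\1\otimes X$, and $K_\p\otimes L_{V^c}\1$ has empty support (by $\supp(K_\p)\subseteq V$ and $\supp(L_{V^c}\1)\subseteq V^c$) and hence vanishes, making $L_{V^c}X$ a $\Lambda_V$-acyclic. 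The main obstacle I anticipate is (c): the remark preceding this proposition flags an asymmetry between cellularization (enriched in spaces) and homological localization (enriched in $\C$), so identifying $\Hom(\Gamma_V\1, RX)$ with the completion requires careful bookkeeping of fibrant replacement and of the passage between these two enrichments.
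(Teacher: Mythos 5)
Your arguments for parts (b), (c), and (d) are correct and are substantially more self-contained than the paper's treatment, which for those parts amounts to a list of references. Your (c) in particular is a nice reconstruction: you verify locality and the $\K(V)$-equivalence of $X \to \Hom(\Gamma_V\1, RX)$ directly, with the $\Lambda_V$-acyclicity of the fibre $\Hom(L_{V^c}\1, RX)$ falling out of dualizability of $K_\p$ and disjointness of supports $\overline{\{\p\}} \cap V^c = \emptyset$. Likewise your (d) (smash the $\K(V)$-equivalence $X\to\Lambda_V X$ with $\Gamma_V\1$; show $L_{V^c}X$ is $\Lambda_V$-acyclic by a support argument) is the standard route and is valid. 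For (b) you are correct that nullification at a set of compact objects is a finite, hence smashing, localization, and that $\Gamma_V$ is then smashing as its fibre; the non-example for $\Lambda_V$ is fine.

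The gap is in (a), for $\Lambda_V$. Closure of the $\bigoplus_\p K_\p$-local equivalences under tensoring is what makes $\Lambda_V$ a \emph{monoidal} Bousfield localization, i.e.\ makes $\Lambda_V\C$ a monoidal model category; but this is not, on its own, what lets you conclude that the localization of a strict monoid in $\C$ is again a strict monoid. For that one needs the monoid axiom of Schwede--Shipley to hold in the localized model structure $\Lambda_V\C$, which is exactly the route the paper takes: White's theorem reduces monoid preservation to the monoid axiom for the localized structure, and the latter is supplied by Barnes's result that homological localizations inherit the monoid axiom. Your criterion is necessary but not sufficient without this extra input. Your smashing argument for $L_{V^c}$ does identify the monoid structure at the level of homotopy categories (via the idempotent monoid $L_{V^c}\1$), but the paper needs point-set monoids so that the module model categories of Section~\ref{sec:tatesquare} can be formed, and again the monoid-axiom argument (viewing $L_{V^c}$ as the $L_{V^c}\1$-homological localization) is the right technical device.
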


\begin{proof}
For (a), by~\cite[3.2]{White18} it is sufficient to prove that $L_{V^c}\C$ and $\Lambda_V\C$ satisfy the monoid axiom, since both $L_{V^c}$ and $\Lambda_V$ are monoidal Bousfield localizations~\cite[5.6,~6.3]{BalchinGreenlees}. Recall that $\C$ satisfies the monoid axiom by our standing assumptions. Therefore, $\Lambda_V\C$ also satisfies the monoid axiom since it is a homological localization, see for instance~\cite[3.8]{Barnes09b}. To show that $L_{V^c}\C$ satisfies the monoid axiom, we note that we can view it as a homological localization with regard to the object $L_{V^c}\1$, and so part (a) follows.
 		Part (b) is well known; see for instance~\cite[3.3.3,~3.3.5]{HoveyPalmieriStrickland97} and~\cite[5.5]{BalchinGreenlees}. Part (c) follows from the fact that every $\K(V)$-equivalence is a $\Gamma_V\1$-equivalence. The first equivalence of part (d) is~\cite[2.3]{Greenlees01b}, and the other follows similarly.
\end{proof}

Certain composites are immediate from the geometric descriptions.
\begin{prop}\label{prop:composite}
Suppose that $V' \subseteq V$ is an inclusion of families and
$U'\supseteq U$ an  inclusion of cofamilies. 
 For all objects $X$ of $\C$ we have natural equivalences
\begin{itemize}
\item[(a)]  $L_UL_{U'}X\xlongleftarrow{\sim}  L_U X \xlongrightarrow{\sim} L_{U'}L_U X$
\item[(b)]  $\Gamma_V\Gamma_{V'} X \xlongrightarrow{\sim} \Gamma_{V'} X \xlongleftarrow{\sim}
\Gamma_{V'}\Gamma_VX$
\item[(c)] $\Lambda_{V'}\Lambda_VX \xlongleftarrow{\sim} \Lambda_{V'} X \xlongrightarrow{\sim} \Lambda_V\Lambda_{V'} X.$ 
\end{itemize}
\end{prop}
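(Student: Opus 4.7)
The plan is to reduce all three parts to basic support calculations using the cofibre sequence $\Gamma_V X \to X \to L_{V^c} X$ together with the key properties from Proposition~\ref{prop-tate cohomology functors}: that $\Gamma_V$ is smashing and that $\Lambda_V X \simeq \Hom(\Gamma_V \1, RX)$. For part (a), I use two observations. First, since $V' \subseteq V$, any $L_U$-local object automatically satisfies the (weaker) $L_{U'}$-locality condition on $\map(K_\p,-)$ for $\p \in V'$, so $L_{U'} L_U X \simeq L_U X$. Second, applying $L_U$ to the cofibre sequence $\Gamma_{V'} X \to X \to L_{U'} X$ annihilates the fibre, because $\Gamma_{V'} X \in \loc(\K(V') \otimes \G) \subseteq \loc(\K(V) \otimes \G)$ and everything in the larger localizing subcategory is killed by $L_U = L_{V^c}$. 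Hence $X \to L_{U'} X$ is an $L_U$-equivalence, giving $L_U X \simeq L_U L_{U'} X$.

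Part (b) is completely dual. The inclusion $\loc(\K(V') \otimes \G) \subseteq \loc(\K(V) \otimes \G)$ says every $\Gamma_{V'}$-torsion object is already $\Gamma_V$-torsion, and since $\Gamma_V$ is a colocalization onto that subcategory we get $\Gamma_V \Gamma_{V'} X \simeq \Gamma_{V'} X$. For the reverse equivalence I apply $\Gamma_{V'}$ to $\Gamma_V X \to X \to L_{V^c} X$; the object $L_{V^c} X$ is $L_{V^c}$-local, so by the first observation of part (a) it is also nullified by $\K(V')$, which forces $\Gamma_{V'} L_{V^c} X \simeq 0$ and therefore $\Gamma_{V'} \Gamma_V X \simeq \Gamma_{V'} X$.

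Part (c) I would deduce formally from (b) using the tensor--Hom adjunction. Combining the smashing formula $\Gamma_V(-) \simeq \Gamma_V\1 \otimes -$ with $\Lambda_V(-) \simeq \Hom(\Gamma_V\1, R(-))$ and iterating gives
\[
\Lambda_{V'}\Lambda_V X \simeq \Hom(\Gamma_{V'}\1 \otimes \Gamma_V\1,X) \simeq \Hom(\Gamma_V\Gamma_{V'}\1,X) \simeq \Hom(\Gamma_{V'}\1,X) \simeq \Lambda_{V'}X,
\]
where the third equivalence is part (b) applied to the unit $\1$. The equivalence $\Lambda_V\Lambda_{V'}X \simeq \Lambda_{V'}X$ follows by the symmetric calculation (using $\Gamma_{V'}\Gamma_V\1 \simeq \Gamma_{V'}\1$ from the other half of (b)).

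The whole proposition is essentially formal bookkeeping: all three statements are forced by the support geometry of Section~\ref{sec:functors} together with Proposition~\ref{prop-tate cohomology functors}, and there is no serious obstacle. The only mild subtlety is that a Koszul object $K_\p$ is not canonical, but any two choices generate the same thick tensor ideal, so the localizing subcategories and nullifications used in the arguments above are independent of these choices.
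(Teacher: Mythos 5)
The paper offers no proof of this proposition, introducing it with the remark that ``certain composites are immediate from the geometric descriptions''; you have supplied the verification, and it is correct. Your argument is exactly the bookkeeping the authors are implicitly invoking: the nestings $\K(V')\subseteq\K(V)$ and $U\subseteq U'$ give the locality/colocality inclusions, the cofibre sequence $\Gamma_V X\to X\to L_{V^c}X$ together with the smashing property handles the cross-terms, and the tensor--Hom adjunction $\Lambda_W(-) \simeq \Hom(\Gamma_W\1, R(-))$ transports part (b) to part (c). One small point of precision worth tightening: when you say $L_{V^c}X$ is ``nullified by $\K(V')$,'' what you mean is that $L_{V^c}X$ is $\K(V')$-null (i.e.\ $\map(K_{\p},L_{V^c}X)\simeq *$ for $\p\in V'$, which follows from $V'\subseteq V$), and then the cofibre sequence $\Gamma_{V'}Y\to Y\to L_{(V')^c}Y$ applied to $Y=L_{V^c}X$ forces $\Gamma_{V'}L_{V^c}X\simeq 0$; alternatively one can use the smashing property directly, writing $\Gamma_{V'}L_{V^c}X\simeq \Gamma_{V'}\1\otimes L_{V^c}\1\otimes X$ and observing $\Gamma_{V'}\1\otimes L_{V^c}\1\simeq 0$. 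With that minor clarification the proof is complete and matches the intent of the paper.
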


Extreme cases also simplify.
\begin{lem}\label{lem:max and min}
Let $\C$ be a finite-dimensional Noetherian model category.
\begin{itemize}
	\item[(a)] If the Balmer spectrum of $h\C$ is irreducible 
	(i.e., there is a single generic point $\gen$) then the completion $\Lambda_{\downcl(\gen)}$ is 
	isomorphic to the identity.
	\item[(b)] If $\mathfrak{m}$ is a closed point of the Balmer spectrum, then there is a weak equivalence 
	\[\Lambda_{\downcl(\mathfrak{m})}\1 \xrightarrow{\sim} L_{\upcl(\mathfrak{m})} \Lambda_{\downcl(\mathfrak{m})} \1.\]
\end{itemize}
\end{lem}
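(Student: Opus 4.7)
The plan is to reduce both parts to the cofibre sequence $\Gamma_V X \to X \to L_{V^c} X$ and the MGM equivalence from Proposition~\ref{prop-tate cohomology functors}.

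For part (a), since $\gen$ is the unique generic point, $\downcl(\gen) = \spcc(h\C)$, so $\Lambda_\gen = \Lambda_{\spcc(h\C)}$. I would first apply the cofibre sequence with $V = \spcc(h\C)$ so that $V^c = \emptyset$: the support identity gives $\supp(L_{V^c} X) = \supp(X) \setminus V = \emptyset$, and since an object with empty support is zero we deduce $L_\emptyset X \simeq 0$, forcing $\Gamma_{\spcc(h\C)} X \xrightarrow{\sim} X$. Combining with the MGM equivalence then gives $X \simeq \Gamma_{\spcc(h\C)} X \xrightarrow{\sim} \Gamma_{\spcc(h\C)} \Lambda_{\spcc(h\C)} X \simeq \Lambda_{\spcc(h\C)} X$, so $\Lambda_\gen$ is isomorphic to the identity.

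For part (b), I would apply the cofibre sequence to $Y = \Lambda_\mathfrak{m}\1$ with the family $V = \upcl(\mathfrak{m})^c = \{\q \mid \mathfrak{m} \not\subseteq \q\}$; a small check (if $\q \subseteq \p$ and $\mathfrak{m} \subseteq \q$ then $\mathfrak{m} \subseteq \p$) shows this subset is indeed downward closed, and $V^c = \upcl(\mathfrak{m})$ so that $L_{V^c} = L_\mathfrak{m}$. It therefore suffices to verify $\Gamma_V \Lambda_\mathfrak{m}\1 \simeq 0$, and by the definition of $\Gamma_V$ as a cellularization this reduces to showing $[K_\p \otimes G, \Lambda_\mathfrak{m}\1] = 0$ for every $\p \in V$ and every generator $G \in \G$. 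Rewriting $\Lambda_\mathfrak{m}\1 \simeq \Hom(\Gamma_\mathfrak{m}\1, \1)$ via Proposition~\ref{prop-tate cohomology functors}(c) (with fibrant replacement suppressed) and invoking tensor--hom adjunction, this mapping group becomes $[K_\p \otimes G \otimes \Gamma_\mathfrak{m}\1, \1]$.

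The key step is then the vanishing of $K_\p \otimes \Gamma_\mathfrak{m}\1$. Since $\mathfrak{m}$ is closed, $\overline{\{\mathfrak{m}\}} = \{\mathfrak{m}\}$, so Proposition~\ref{prop:properties_support}(e) gives $\supp(K_\p \otimes K_\mathfrak{m}) = \overline{\{\p\}} \cap \{\mathfrak{m}\}$. The assumption $\p \in V$ forces $\mathfrak{m} \notin \overline{\{\p\}}$, so this intersection is empty and $K_\p \otimes K_\mathfrak{m} \simeq 0$. Since $\Gamma_\mathfrak{m}\1$ lies in the localizing tensor ideal generated by $K_\mathfrak{m}$, the object $K_\p \otimes \Gamma_\mathfrak{m}\1$ lies in the localizing tensor ideal generated by $K_\p \otimes K_\mathfrak{m} \simeq 0$ and hence vanishes. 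The main delicate point is the bookkeeping of families versus cofamilies and the direction of inclusion on primes; once the correct family $V$ is identified the remaining argument is an essentially formal support computation.
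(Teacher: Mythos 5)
The paper does not give a proof of this lemma; it is stated without comment, so there is nothing to compare against directly. Your argument is correct and fills the gap cleanly.

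For (a), the chain of reasoning is sound: taking $V = \spcc(h\C)$ forces $L_{V^c}Y \simeq 0$ for every $Y$ (empty support implies zero), hence $\Gamma_V Y \xrightarrow{\sim} Y$ for every $Y$; applying this to both $X$ and $\Lambda_V X$, together with the MGM equivalence $\Gamma_V X \xrightarrow{\sim}\Gamma_V\Lambda_V X$, gives a natural zig-zag of weak equivalences between $X$ and $\Lambda_\gen X$ (and in fact the naturality square for the unit $X \to \Lambda_V X$ shows the unit itself is an equivalence). A slightly more direct route, which you might prefer, is to note that $\supp(K_\gen) = \overline{\{\gen\}} = \spcc(h\C) = \supp(\1)$, so by Proposition~\ref{prop:properties_support} the objects $K_\gen$ and $\1$ generate the same thick tensor ideal; consequently $K_\gen \otimes X \simeq 0$ forces $X \simeq 0$, so $\Lambda_\gen$, being the homological localization at $K_\gen$, has no nontrivial acyclics and is the identity.

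For (b), the identification $V = \upcl(\m)^c$ as a family with $L_{V^c} = L_\m$, the reduction to $\Gamma_V\Lambda_\m\1 \simeq 0$ via the cofibre sequence, and the computation $[K_\p\otimes G,\Lambda_\m\1]\cong[K_\p\otimes G\otimes\Gamma_\m\1,\1]=0$ using Proposition~\ref{prop-tate cohomology functors}(c) and the internal hom adjunction are all correct. The decisive observation that $\p\in V$ means $\m\not\subseteq\p$, i.e.\ $\m\notin\overline{\{\p\}}$, so that $\supp(K_\p\otimes K_\m)=\overline{\{\p\}}\cap\{\m\}=\emptyset$ and hence $K_\p\otimes\Gamma_\m\1\in\loc_\otimes(K_\p\otimes K_\m)=0$, is exactly the point (and is where closedness of $\m$ is used). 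This is a clean and natural proof.
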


\subsection{Splittings}
Recall from Example~\ref{eg:family dim less than} that we write $\Gamma_{\leq 0}$ and $\Lambda_{\leq 0}$ for the torsion and completion functor associated to the family of primes of dimension  zero. We use subscripts to indicate the dimension of primes. For example, $\p_0$ indicates a prime of dimension zero, and $\bigoplus_{\p_0}$ is shorthand for the sum over all primes of dimension zero. 
We note that part (a) of the following result also appears in~\cite[3.7]{Stevenson}.

\begin{prop}\label{prop-splittinf tors and compl}
For all $X \in \C$, there are equivalences \begin{itemize}
\item[(a)] $\Gamma_{\leq 0}X \xleftarrow{\sim} \bigoplus_{\p_0}\Gamma_{\downcl(\p_0)}X,$
\item[(b)] $\Lambda_{\leq 0}X \xrightarrow{\sim} \prod_{\p_0}\Lambda_{\downcl(\p_0)}X.$
\end{itemize}
\end{prop}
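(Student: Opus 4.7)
The plan is to prove part (a) first by constructing a natural comparison map and checking it is a weak equivalence on the compact generators of the cellularized subcategory $\Gamma_{\leq 0}\C$; part (b) will then follow formally from (a) using Proposition~\ref{prop-tate cohomology functors}(c).

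For (a), since $\Gamma_{\leq 0}$ is cellularization at $\K(V) \otimes \G$ for $V$ the family of dimension-zero primes, and $K_{\p_0} \otimes \G \subseteq \K(V) \otimes \G$ for each $\p_0 \in V$, the cellularization map $\Gamma_{\p_0}X \to X$ factors through $\Gamma_{\leq 0}X$ for every closed point $\p_0$. Summing gives the natural comparison map $\bigoplus_{\p_0}\Gamma_{\p_0}X \to \Gamma_{\leq 0}X$; both sides lie in $\Gamma_{\leq 0}\C$, so to check it is a weak equivalence it suffices to show it induces an isomorphism on $[K_{\p_0} \otimes G, -]$ for each closed point $\p_0$ and each $G \in \G$.

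The essential input is orthogonality between Koszul objects at distinct closed points: if $\p_0 \neq \q_0$ are both of dimension zero, they are minimal and hence incomparable primes, so
\[
\supp(K_{\p_0} \otimes K_{\q_0}) = \{\p_0\} \cap \{\q_0\} = \emptyset
\]
and thus $K_{\p_0} \otimes K_{\q_0} \simeq 0$ by Proposition~\ref{prop:properties_support}. Since each $G \in \G$ is dualizable, this upgrades to $[K_{\p_0} \otimes G, K_{\q_0} \otimes G'] = 0$ for all $G, G' \in \G$. As $\Gamma_{\q_0}X \in \loc(K_{\q_0} \otimes \G)$ and $[K_{\p_0} \otimes G, -]$ is homological and commutes with coproducts, the vanishing propagates to $[K_{\p_0} \otimes G, \Gamma_{\q_0}X] = 0$ whenever $\p_0 \neq \q_0$. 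Compactness of $K_{\p_0} \otimes G$ allows us to pull the coproduct outside, so only the term $\q_0 = \p_0$ survives, and one identifies both $[K_{\p_0} \otimes G, \bigoplus_{\q_0}\Gamma_{\q_0}X]$ and $[K_{\p_0} \otimes G, \Gamma_{\leq 0}X]$ with $[K_{\p_0} \otimes G, X]$ using the defining property of cellularization. This proves (a).

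For (b), apply the internal $\Hom(-, RX)$ to the equivalence $\bigoplus_{\p_0}\Gamma_{\p_0}\1 \simeq \Gamma_{\leq 0}\1$ obtained from (a) and invoke Proposition~\ref{prop-tate cohomology functors}(c): since $\Hom$ sends coproducts in its first variable to products and $\Hom(\Gamma_{\p_0}\1, RX) \simeq \Lambda_{\p_0}X$, this yields $\Lambda_{\leq 0}X \simeq \prod_{\p_0}\Lambda_{\p_0}X$. The main obstacle is the Koszul orthogonality step in (a), where one must carefully lift the vanishing of $K_{\p_0} \otimes K_{\q_0}$ on generators to vanishing of the Hom functor on the whole localizing subcategory $\loc(K_{\q_0} \otimes \G)$; everything else is formal manipulation of cellularization, support, and the $\Hom$ formula for completion.
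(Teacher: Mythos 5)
Your proof is correct and follows the same high-level strategy as the paper --- prove (a) by a comparison on cells and then deduce (b) formally by applying $\Hom(-,RX)$ and the identification $\Lambda_V X \simeq \Hom(\Gamma_V\1, RX)$ from Proposition~\ref{prop-tate cohomology functors}(c) --- but the key orthogonality step is proved by a genuinely different argument. Both proofs must show $[K_{\q_0}\otimes G, \Gamma_{\p_0}X]=0$ for distinct closed points $\p_0\neq\q_0$; the paper does this via the MGM equivalence, rewriting this group as $[K_{\q_0}\otimes G, \Lambda_{\q_0}\Gamma_{\p_0}X]$ (valid since $K_{\q_0}$ is $\q_0$-torsion) and then arguing that $\Lambda_{\q_0}\Gamma_{\p_0}X\simeq 0$ because $\Gamma_{\p_0}K_{\q_0}$ has empty support. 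You instead stay entirely at the compact level: since $\p_0$ and $\q_0$ are minimal primes, $\supp(K_{\p_0})\cap\supp(K_{\q_0})=\{\p_0\}\cap\{\q_0\}=\emptyset$, so $K_{\p_0}\otimes K_{\q_0}\simeq 0$ (hence also $DK_{\p_0}\otimes K_{\q_0}\simeq 0$), and dualizability plus compactness propagate the Hom vanishing from the generators $K_{\q_0}\otimes\G$ through the localizing subcategory $\loc(K_{\q_0}\otimes\G)$ to $\Gamma_{\q_0}X$. Your route is arguably more elementary, since it uses only the support calculus for compact objects and dualizability without ever invoking $\Lambda$ or the MGM equivalence, whereas the paper's route illustrates the interplay of $\Gamma$ and $\Lambda$ that recurs throughout. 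One small presentational point: in the step upgrading $K_{\p_0}\otimes K_{\q_0}\simeq 0$ to the Hom vanishing $[K_{\p_0}\otimes G, K_{\q_0}\otimes G']=0$, the dualizability that does the work is that of $K_{\p_0}$ (compact $=$ dualizable here), since one moves it across the Hom and uses $\supp(DK_{\p_0})=\supp(K_{\p_0})$; attributing it to $G$ alone understates what is used, though of course $G$ is also dualizable, so nothing is actually wrong.
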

\begin{proof}
We prove part (a) and note that part (b) follows from applying Proposition~\ref{prop-tate cohomology functors}(c). Write $V_0$ for the family of zero dimensional primes. We must show that $\bigoplus_{\p_0}\Gamma_{\downcl(\p_0)}X$ has the universal property of the $(\K(V_0) \otimes \G)$-cellularization. In other words, we must show that $\bigoplus_{\p_0}\Gamma_{\downcl(\p_0)}X$ is in $\mathrm{Loc}_\otimes(K_{\q_0} \mid \mathrm{dim}\q_0 = 0)$ and that the natural map $\bigoplus_{\p_0}\Gamma_{\downcl(\p_0)}X \to X$ is a $(K_{\q_0} \otimes \G)$-cellular equivalence for each $\q_0$.
The first condition is clear from the fact that $\Gamma_{\downcl(\p_0)}X \in \mathrm{Loc}_\otimes(K_{\p_0})$ by definition. By compactness of $K_{\q_0} \otimes G$, the natural map $\bigoplus_{\p_0}\Gamma_{\downcl(\p_0)}X \to X$ is a $K_{\q_0} \otimes \G$-cellular equivalence if and only if $$\bigoplus_{\p_0}[K_{\q_0} \otimes G,\Gamma_{\downcl(\p_0)}X] \to [K_{\q_0} \otimes G,X]$$ is an equivalence for all zero-dimensional primes $\q_0$ and $G \in \G$. 

If $\q_0 \neq \p_0$ we claim that $[K_{\q_0} \otimes G, \Gamma_{\downcl(\p_0)}X] = 0$. Since $\Gamma_{\downcl(\q_0)}K_{\q_0} \to K_{\q_0}$ is an equivalence, $[K_{\q_0} \otimes G, \Gamma_{\downcl(\p_0)}X] = [K_{\q_0} \otimes G, \Lambda_{\downcl(\q_0)}\Gamma_{\downcl(\p_0)}X]$. The map $\Gamma_{\downcl(\p_0)}X \to 0$ is a $K_{\q_0}$-equivalence since the support of $\Gamma_{\downcl(\p_0)}K_{\q_0}$ is empty, and hence $\Lambda_{\downcl(\q_0)}\Gamma_{\downcl(\p_0)}X \simeq 0$. Therefore, \[\bigoplus_{\p_0}[K_{\q_0} \otimes G,\Gamma_{\downcl(\p_0)}X] \to [K_{\q_0} \otimes G,X]\] is an equivalence if and only if $[K_{\q_0} \otimes G,\Gamma_{\downcl(\q_0)}X] \to [K_{\q_0} \otimes G,X]$ is an equivalence, which is true by the definition of $\Gamma_{\downcl(\q_0)}$. 
\end{proof}

It is worth making explicit some consequences for the torsion part. 
\begin{cor}\leavevmode
\begin{itemize}
\item[(a)]  If the support of  $X$
consists of primes of dimension $i$  then
$$X\simeq \bigoplus_{\p_i}\Gamma_{\downcl(\p_i)}X \simeq \bigoplus_{\p_i}L_{\upcl(\p_i)}X.$$
\item[(b)]  If the support of  $X$
consists of primes of dimension $\geq i$  then 
there is an equivalence
$$\Gamma_{\leq i} X \xleftarrow{\sim}
\bigoplus_{\p_i}\Gamma_{\downcl(\p_i)}X.$$
\item[(c)]  If the support of  $X$
consists of primes of dimension $\leq i$  then 
there is an equivalence
$$L_{\geq i} X \xleftarrow{\sim} \bigoplus_{\p_i}L_{\upcl(\p_i)}X.$$ 
\end{itemize}
\end{cor}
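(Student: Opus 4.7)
The plan is to prove part (b) first, directly adapting the argument of Proposition~\ref{prop-splittinf tors and compl}(a) to higher dimension, and then to obtain (a) and (c) as formal consequences via support arguments and Proposition~\ref{prop:composite}.

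For (b), the strategy is to verify that $\bigoplus_{\p_i}\Gamma_{\p_i}X$ satisfies the universal property of the cellularization of $X$ at $\K(V_{\leq i}) \otimes \G$, where $V_{\leq i}$ denotes the family of primes of dimension at most $i$. The containment in $\loc_\tensor(K_\q : \dim \q \leq i)$ is immediate since each $\Gamma_{\p_i}X$ lies in $\loc_\tensor(K_{\p_i})$. For the cellular equivalence condition, one checks that $\bigoplus_{\p_i}[K_\q \otimes G, \Gamma_{\p_i}X] \to [K_\q \otimes G, X]$ is an isomorphism for all $\q$ with $\dim \q \leq i$ and $G \in \G$, by case analysis on $\dim \q$. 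When $\dim \q < i$, the hypothesis $\supp(X) \subseteq \{\dim \geq i\}$ forces $\Gamma_\q X = 0$ by the support calculus; the MGM equivalence then gives $\Lambda_\q X \simeq \Lambda_\q\Gamma_\q X = 0$, and combining with the torsion property of $K_\q \otimes G$ and the adjunction from Proposition~\ref{prop-tate cohomology functors}(c) shows both sides of the map vanish. When $\dim \q = i$, the incomparability of distinct dimension $i$ primes means $\Gamma_\q\Gamma_{\p_i}X$ has empty support whenever $\p_i \neq \q$, killing the cross-terms, while the $\p_i = \q$ summand recovers $[K_\q \otimes G, X]$ by the defining property of the cellularization $\Gamma_\q$.

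Part (a) follows from (b) as follows: the stronger hypothesis $\supp(X) \subseteq \{\dim = i\}$ implies that the cofibre $L_{\geq i+1}X$ of $\Gamma_{\leq i}X \to X$ has empty support and hence is zero, so $X \simeq \Gamma_{\leq i}X \simeq \bigoplus_{\p_i}\Gamma_{\p_i}X$ by (b). For the second equivalence, apply $L_{\p_i}$ to this decomposition: incomparability of distinct dimension $i$ primes makes $L_{\p_i}\Gamma_{\p_j}X$ vanish for $\p_j \neq \p_i$ (empty support), and $L_{\p_i}\Gamma_{\p_i}X \simeq \Gamma_{\p_i}X$ since the fibre $\Gamma_{\upcl(\p_i)^c}\Gamma_{\p_i}X$ also has empty support; combining, $L_{\p_i}X \simeq \Gamma_{\p_i}X$.

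For (c), under $\supp(X) \subseteq \{\dim \leq i\}$ the object $L_{\geq i}X$ has support concentrated in dimension exactly $i$, so (a) gives $L_{\geq i}X \simeq \bigoplus_{\p_i} L_{\p_i}(L_{\geq i}X)$; the inclusion of cofamilies $\upcl(\p_i) \subseteq \{\dim \geq i\}$ together with Proposition~\ref{prop:composite}(a) then identifies $L_{\p_i}(L_{\geq i}X) \simeq L_{\p_i}X$. The main obstacle is the case analysis for (b); once that is handled, (a) and (c) reduce to support-based bookkeeping using Propositions~\ref{prop-tate cohomology functors} and~\ref{prop:composite}.
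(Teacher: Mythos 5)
Your proposal is correct, and since the paper records this corollary without proof (as a noted consequence of Proposition~\ref{prop-splittinf tors and compl}), your argument fills in exactly the expected route. Part~(b) is the key: you mimic the universal-property argument used for the dimension-zero proposition, using the hypothesis $\supp(X)\subseteq\{\dim\geq i\}$ precisely where it is needed, namely to force the cross-terms and lower-dimensional contributions to vanish. The one place you elide slightly is the verification that the left-hand side $\bigoplus_{\p_i}[K_\q\otimes G,\Gamma_{\p_i}X]$ vanishes when $\dim\q<i$: you justify only the right-hand side explicitly. To close this, note $\supp(\Gamma_{\p_i}X)\subseteq\supp(X)\cap\downcl(\p_i)\subseteq\{\p_i\}$, so $\supp(\Gamma_\q\Gamma_{\p_i}X)\subseteq\downcl(\q)\cap\{\p_i\}=\emptyset$ since $\dim\q<i$; then $\Lambda_\q\Gamma_{\p_i}X\simeq\Lambda_\q\Gamma_\q\Gamma_{\p_i}X\simeq 0$ and the $\q$-torsion property of $K_\q$ finishes it, just as in the $\dim\q=i$ case. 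Your deductions of (a) and (c) from (b) are sound: the cofibre sequence plus smashing gives (a), and Proposition~\ref{prop:composite}(a) applied to the inclusion of cofamilies $\upcl(\p_i)\subseteq\{\dim\geq i\}$ gives (c). One could equally prove (a) directly and deduce (b) by applying it to $\Gamma_{\leq i}X$ (whose support lies in dimension exactly $i$) and then using $\Gamma_{\p_i}\Gamma_{\leq i}X\simeq\Gamma_{\p_i}X$ via Proposition~\ref{prop:composite}(b); both orderings are equally short.
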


\section{Examples}\label{sec-example}
We now illustrate the theory described in the previous sections with several examples. We will suppress notation for underlying (co)fibrant replacements in the construction of the functors $\Gamma$, $L$ and $\Lambda$, in order to not unnecessarily overburden the notation. For instance, we will often write $\Lambda X = \underline{\Hom}(\Gamma\1, X)$ where the target $X$ should be implicitly interpreted as fibrantly replaced as in Proposition~\ref{prop-tate cohomology functors}. 
\begin{nota}\label{nota:dimension}
	We emphasize that we label primes by their \emph{dimension}
        and write this as a subscript, see for instance
        Example~\ref{eg:family dim less than}. Therefore $\p_i$
        denotes a Balmer prime of dimension $i$. We also draw
        Balmer spectra with dimension increasing up the page, so that 
closed points are at the bottom. Arrows indicate inclusion.
\end{nota}

\subsection{Derived commutative algebra} \label{sec:derived commutative algebra}
Let $R$ be a Noetherian commutative ring. We write $\mod{R}$ for the category of unbounded chain complexes of $R$-modules. There is a an order reversing homeomorphism 
which is given by 
\[
 \mathrm{Spec}(R) \to \spcc(\Dsf(R)) \qquad 
 \wp \mapsto \{M \in \Dsf(R)^{\omega} \mid M_\wp \simeq 0\}.
\]
If $R$ is a finite-dimensional commutative Noetherian ring, then $\mod{R}$ is a finite-dimensional Noetherian model category when equipped with the projective model structure.

Writing $\wp$ in place of the corresponding Balmer prime, 
the functors $\Gamma_{\downcl(\wp)}$, $\Lambda_{\downcl(\wp)}$ and $L_{\upcl(\wp)}$ correspond to the usual functors of derived torsion, derived completion and localization in algebra. There are explicit models for these functors as follows. Suppose that $\wp =(x_1, \ldots, x_n)$. We define the stable Koszul complex by 
\[
K_\infty(\wp) = (R \to R[1/x_1]) \otimes_R \cdots \otimes_R(R \to R[1/x_n])
\]
and the \v{C}ech complex $\check{C}_{\wp}R $ is the suspension of the kernel of the natural map $K_\infty(\wp) \to R$. Up to equivalence this does not depend on the chosen generators of $\wp$. 
Then we have the following identifications for an $R$-module $X$:
\[
\boxed{
\Gamma_{\downcl(\wp)} X \simeq K_\infty(\wp) \otimes_R X \qquad \Lambda_{\downcl(\wp)} X \simeq
\Hom_R(K_\infty(\wp),X)\qquad L_{\upcl(\wp)} X\simeq X_\wp
}
\] 
As usual $X_\wp$ is the localization \emph{at} the ideal $\wp$ so it
inverts the multiplicative set $R\,\backslash \wp$. On the other hand,
$L_{(\downcl \wp)^c}X$ is the localization \emph{away} from $\wp$ so can be calculated as $\check{C}_{\wp}R \otimes_R X$. 
For more details see for instance~\cite{DwyerGreenlees02, GreenleesMay95b}.

For example, take $R=\Z$, $\wp=(p)$: we may take $K_{(p)} = \Z/p$. Thus
$\Gamma_{\downcl(p)}\Z \simeq \Sigma^{-1}\Z/p^\infty$, $L_{\upcl(p)}\Z \simeq \Z_{(p)}$ and
$\Lambda_{\downcl(p)} \Z \simeq \Z_p$, the $p$-adic integers.

\subsection{Chromatic homotopy theory} 
Let $\Sp_{(p)}$ denote the category of $p$-local spectra equipped with the stable model
structure.  Let $$P(n) = \{X \in h \Sp_{(p)}^{\omega}\mid K(n)_*(X) = 0\}$$
where $K(0)$ is rational homology, $K(\infty)$ is mod $p$ homology
and for finite $n\geq 1$, $K(n)$ is the $n$th mod $p$ Morava
$K$-theory. By the K\"unneth theorem the $P(n)$ are prime, and 
Hopkins--Smith~\cite{HopkinsSmith98} showed that the Balmer
spectrum is linear:
\[
P(0) \supset P(1) \supset P(2) \supset \cdots \supset P(n) \supset \cdots\supset
P(\infty). 
\]
Note that this Balmer spectrum is not finite-dimensional (and worse still, non-Noetherian). As such, we use the smashing localization $L_n=L_{K(0)\vee \cdots \vee K(n)}$ to obtain a finite-dimensional Noetherian model category with Balmer spectrum
\[
P(0) \supset P(1) \supset P(2) \supset \cdots \supset
P(n). 
\]
%\[\begin{tikzcd}
%P(0) \\ P(1) \arrow[u] \\ \vdots \arrow[u] \\ P(n-1) \arrow[u] \\ P(n) \arrow[u]
%\end{tikzcd}\]
so that the prime $P(i)$ is of dimension $n-i$ in the $L_n$-local category.
%A finite spectrum $X$ is said to have type $i$ if $K(i)_*(X) \not = 0$
%and $K(i-1)_*(X)=0$, or equivalently if $\supp(X)=
%\overline{\{P(i)\}}$. From the Balmer spectrum, any finite spectrum of type $i$ is a Koszul object for $P(i)$.
% If $\p= P(i)$ and $F(i)$ is a finite spectrum of type $i$, then we have the following identifications:
%\[
%\boxed{L_{\p}X= L_{K(0)\vee \cdots \vee K(i)}X \qquad \Lambda_\p X= L_{F(i)}X \qquad \Lambda_{\p} L_{\ge i-1}X = L_{K(n-i)}X.}
%\]

\subsection{Rational equivariant spectra} \label{sec:rational equivariant spectra}
Let $G$ be a compact Lie group. We write $\Sp_G$ for the category of rational $G$-equivariant spectra. The second author~\cite{Greenlees19} showed that the Balmer spectrum of rational $G$-spectra is in bijection with the set of conjugacy classes of closed subgroups of $G$, with inclusion corresponding to cotoral inclusion; recall that $K \hookrightarrow H$ is \emph{cotoral} if $K$ is normal in $H$ and the quotient $H/K$ is a torus. The bijection is given by sending a conjugacy class $(H)$ to $$\p_H = \{X \mid \Phi^HX \simeq_1 \ast\}.$$
In general the Balmer
spectrum of rational $G$-spectra is not Noetherian. However, when $G$ is an $r$-torus the Balmer spectrum is
$r$-dimensional and Noetherian, so the topology is determined by the poset structure. 

For example, if $\bbT$ is the circle group, then the Balmer spectrum is given by
\[\begin{tikzcd}
 & & \p_{\bbT} & & \\ & & & & \\
\cdots \arrow[uurr] & \p_{C_4} \arrow[uur] & \p_{C_3} \arrow[uu] & \p_{C_2} \arrow[uul] & \p_{C_1}\arrow[uull]
\end{tikzcd}\]
where the arrows indicate cotoral inclusions.

Note that the support of a rational $G$-spectrum $X$ coincides with
its geometric isotropy $\mathcal{I}_g(X)= \{H \mid \Phi^H X \not\simeq
0 \}$. Any family $\F$ of subgroups of $G$ defines a family of
primes $\{\wp_H \mid H \in \F\}$, that by abuse of notation we still denote
by $\F$. We write $E\F$ for the universal $G$-space
characterized by its fixed points $$(E\F)^H \simeq  \begin{cases}
\ast & \mathrm{if} \; H \in \F \\
\emptyset & \mathrm{if}\; H \not\in \F
\end{cases}$$ and $\widetilde{E}\F$ for the cofibre of the nonzero map $E\F_+ \to S^0$. Then we have the following identifications
\[
\boxed{\Gamma_\F X \simeq E\F_+ \wedge X \qquad \Lambda_\F X \simeq F(E\F_+, X) \qquad 
L_{\F^c} M \simeq \widetilde{E}\F \wedge X.}
\]

\section{The standard model}\label{sec:tatesquare}

Throughout we fix a finite-dimensional Noetherian model category $\C$ and a family of primes $V$ in $\spcc(h\C)$. This gives a set of Koszul objects $\K(V)$ and associated functors $\Gamma_V, L_{V^c}, \Lambda_V$ as defined in Section~\ref{sec:functors}. When no 
confusion is likely to arise we will drop the reference to the family $V$.

\begin{rem}
 The machinery developed in this paper applies to any rigidly-compactly generated model category $\C$ equipped 
 with functors $\Gamma, L$ and $\Lambda$ satisfying the properties listed in 
 Section~\ref{sec:functors}. For example, these functors can be defined 
 whenever $\C$ is part of a local duality 
 context~\cite{BarthelHeardValenzuela18} or a Tate context~\cite{Greenlees01b}.
 We have decided to present this material from the perspective of the Balmer spectrum,
 as this approach better suits our long-term goal of constructing a 
 torsion model for a tensor-triangulated category via the dimension filtration~\cite{torsion2}. 
 \end{rem}

\begin{defn}\label{defn:T-diagram}
We write $T^\lrcorner$ for the diagram 
$$\begin{tikzcd}
& L\1 \arrow[d, "\theta"] \\ \Lambda\1 \arrow[r, "\varphi"'] & L\Lambda\1
\end{tikzcd}$$
which we shall refer to as the \emph{Tate cospan}. 
\end{defn}

\begin{lem}[{\cite[2.3]{Greenlees01b}}]\label{lem:Tate}
The homotopy pullback of $T^\lrcorner$ is weakly equivalent to $\1$.
Accordingly, for any $X$, the square $$\begin{tikzcd}
	X \arrow[r] \arrow[d] & L\1 \otimes X \arrow[d] \\
	\Lambda\1 \otimes X \arrow[r] & L\Lambda\1 \otimes X 
	\end{tikzcd}$$ is a homotopy pullback.
\end{lem}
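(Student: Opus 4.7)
The plan is to derive the lemma from two basic structural facts already available in the paper: the defining cofibre sequence
\[
\Gamma\1 \longrightarrow \1 \longrightarrow L\1
\]
(coming from the construction of $\Gamma$ and $L$ in Section~\ref{sec:functors}) and the MGM equivalence $\Gamma\1 \xrightarrow{\sim} \Gamma\Lambda\1$ of Proposition~\ref{prop-tate cohomology functors}(d). Applying $\Lambda$ to the above cofibre sequence, together with the unit maps, yields the commutative ladder
\[\begin{tikzcd}
\Gamma \1 \arrow[d, "\sim"']\arrow[r]& \1 \arrow[d]\arrow[r]& L\1 \arrow[d] \\
\Gamma \Lambda \1 \arrow[r]& \Lambda \1 \arrow[r] & L\Lambda\1
\end{tikzcd}\]
in which both rows are cofibre sequences and the left-hand vertical map is an equivalence by MGM.

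The key observation, which gives the first assertion, is that in any stable model category a map of cofibre sequences in which the left-hand vertical is a weak equivalence has its right-hand square both a homotopy pushout and a homotopy pullback. Concretely, the homotopy pushout of $\Lambda\1 \leftarrow \1 \to L\1$ is computed as the cofibre of $\1 \to \Lambda\1 \oplus L\1$, which coincides with the cofibre of $\Gamma\Lambda\1 \to \Lambda\1$ thanks to the MGM identification $\Gamma\1\simeq\Gamma\Lambda\1$, and this is exactly $L\Lambda\1$. Since $\C$ is stable, this homotopy pushout square is also a homotopy pullback square, which shows that $\1$ is the homotopy pullback of the Tate cospan $T^{\lrcorner}$.

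For the second claim, derived tensoring with $X$ preserves homotopy pushouts because it is a left adjoint (at the level of homotopy categories), so the square obtained by applying $-\otimes X$ remains a homotopy pushout; stability then upgrades it to a homotopy pullback. The only subtlety to verify is that the derived tensor product is indeed being computed correctly, i.e.\ that one of the two factors is cofibrant in the relevant sense; this is ensured by the standing hypotheses on $\C$ (rigidly-compactly generated, unit cofibrant, monoid axiom), which allow us to replace $X$ or the Tate cospan by suitable (co)fibrant models without changing the homotopy type.

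The main obstacle is essentially bookkeeping: checking that the equivalence $\Gamma\1\simeq\Gamma\Lambda\1$ really is compatible with the evident maps into $\Lambda\1$, so that the $3\times 2$ ladder genuinely commutes and the pushout identification above is justified. This amounts to the naturality built into the construction of $\Gamma$ and $\Lambda$, and once it is recorded, the rest of the argument is a formal stability computation.
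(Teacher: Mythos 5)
Your proof is correct and takes essentially the same route as the paper: both rest on the MGM equivalence $\Gamma\1\simeq\Gamma\Lambda\1$, which identifies the fibres of the two horizontal maps in the Tate square, and then stability converts this fibre comparison into the pullback statement. The paper states this in one line; your version is the same argument spelled out (noting that the square is simultaneously a homotopy pushout and pullback, and that exactness of $-\otimes X$ transports the conclusion to the second square), with only a minor looseness in phrasing the second step as a consequence of ``being a left adjoint at the level of homotopy categories'' rather than of $-\otimes X$ being exact.
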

\begin{proof} 
The fibres of $\1 \to L\1$ and $\Lambda\1 \to L\Lambda\1$ are both equivalent to $\Gamma\1$ and hence $\1$ is the homotopy pullback. 
\end{proof}

By Proposition~\ref{prop-tate cohomology functors} we know that the objects appearing in the Tate cospan are monoid objects in $\C$.  As such we can construct module categories over them.

\begin{defn}
  We define the \emph{pre-standard} category $\cpstan$ to be the category of 
  sections of the diagram category
  $$\begin{tikzcd}
& \mod{L\1} \arrow[d,"\theta_*"] \\ \mod{\Lambda\1} \arrow[r,"\phi_*"'] & \mod{L\Lambda\1}
\end{tikzcd}$$
where the arrows are the extensions of scalars coming from the ring maps $\theta\colon L\1 \to L\Lambda\1$ and $\phi\colon \Lambda\1 \to L\Lambda\1$. 
We equip this category with the \texttt{dimp} model structure where the weak equivalences and cofibrations are determined objectwise in the projective model structure on modules, see Notation~\ref{nota:dimp}.
\end{defn}

Recall from Definition~\ref{defn:leftsections} that the category $\cpstan$ has objects which are quintuples $(V,N,W,f,g)$ such that:
\begin{itemize}
\item $V \in \mod{L\1}$;
\item $N \in \mod{\Lambda\1}$;
\item $W \in \mod{L\Lambda\1}$;
\item $f\colon L\Lambda\1 \otimes_{L\1} V \to W$ in $\mod{L \Lambda \1}$;
\item $g\colon L\Lambda\1 \otimes_{\Lambda\1} N \to W$ in $\mod{L \Lambda \1}$.
\end{itemize}
We will usually draw such an object as a diagram 
\begin{equation}\label{eq-obj}
\begin{tikzcd} & V \arrow[d, dashed, "f"] \\ N \arrow[r, dashed, "g"'] & W \end{tikzcd}
\end{equation} 
where the dashed arrow indicates the existence of a map after extension of scalars. 

We define a functor $(-)_\stan \colon \C \to \cpstan$ by tensoring with the 
Tate cospan:
\[
X_\stan = \left(
\begin{tikzcd}
 & L\1 \otimes X \arrow[d, dashed]  \\
\Lambda\1 \otimes X \arrow[r, dashed] & L\Lambda\1 \otimes X
\end{tikzcd} 
\right).
\]
This functor admits a right adjoint which is given by taking the pullback. More precisely, consider an object in the pre-standard category as in Equation~(\ref{eq-obj}).
By adjunction, the data of $L\Lambda\1$-module maps $f\colon
L\Lambda\1 \otimes_{L\1} V \to W$ and $g\colon L\Lambda\1
\otimes_{\Lambda\1} N \to W$ is equivalent to the data of maps
$f^\#\colon V \to \theta^* W$ and $g^\#\colon N \to \phi^* W$ of
$L\1$-modules and $\Lambda\1$-modules respectively. Therefore, by
applying the forgetful functors, we can view the cospan $$N
\xrightarrow{g^\#} W \xleftarrow{f^\#} V$$ as a diagram in $\C$. Taking the 
pullback of this diagram gives a functor $\lim \colon \cpstan \to \C$.

\begin{thm}\label{thm:adelic}
	The adjunction \[\begin{tikzcd}\C \arrow[rr, yshift=1mm, "(-)_\stan"] & & \cell{\G_\mathrm{stan}}\cpstan\arrow[ll, yshift=-1mm, "\mathrm{lim}"]\end{tikzcd}\] is a Quillen equivalence, where $\G$ is a set of compact generators for $\C$.
\end{thm}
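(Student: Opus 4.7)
The strategy is threefold: show that $(-)_\stan \dashv \lim$ is a Quillen adjunction, show that its derived unit is always a weak equivalence, and then apply a cellularization principle to obtain the Quillen equivalence after cellularization at $\G_\stan$. For the first step, the \texttt{dimp} model structure on $\cpstan$ has cofibrations and weak equivalences determined objectwise in the projective model structures on modules. Each of the three vertex-components of $(-)_\stan$ is an extension of scalars along one of the monoid maps $\1 \to L\1$, $\1 \to \Lambda\1$, or $\1 \to L\Lambda\1$ (which are monoids by Proposition~\ref{prop-tate cohomology functors}(a)), and is therefore left Quillen by Lemma~\ref{lem:restrictionleftQuillen}. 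Hence $(-)_\stan$ preserves cofibrations and acyclic cofibrations objectwise and is left Quillen.

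\textbf{Derived unit.} For cofibrant $X \in \C$, let $\widetilde{X_\stan}$ be a fibrant replacement of $X_\stan$ in the \texttt{dimp} structure. Since fibrant replacement here is objectwise, $\lim\widetilde{X_\stan}$ computes the ordinary pullback of a diagram of fibrant objects in $\C$, i.e.\ the homotopy pullback of the Tate cospan tensored with $X$. By Lemma~\ref{lem:Tate} this homotopy pullback is canonically equivalent to $X$, and one verifies that the derived unit is precisely this comparison map. Hence the derived unit is a weak equivalence on all of $\C$.

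\textbf{Cellularization and main obstacle.} Since $\G$ generates $\C$ we have $\cell{\G}\C = \C$. The cellularization principle of Greenlees--Shipley (cf.~\cite[2.6]{GreenleesShipley13}) promotes a Quillen adjunction $F \dashv U$ between stable, proper, cellular model categories to a Quillen equivalence $\cell{\K}\C \simeq_Q \cell{F\K}\D$ whenever the derived unit is a weak equivalence on each $K \in \K$ and each $F(K)$ is compact in $h\D$. Applied with $\K = \G$, the first hypothesis is exactly the previous step. For the second, each of $L\1 \otimes G$, $\Lambda\1 \otimes G$, and $L\Lambda\1 \otimes G$ is compact in its respective module category as the image of the compact object $G$ under a left adjoint whose right adjoint (restriction of scalars) preserves filtered colimits, and compactness of $G_\stan$ in the section category then follows since mapping objects in sections over the finite inverse category $\bullet \to \bullet \leftarrow \bullet$ are finite limits of objectwise mapping objects. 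The main technical obstacle lies in this compactness check for $G_\stan$ and in verifying that the \texttt{dimp} model structure meets the properness and cellularity hypotheses — the latter being furnished by~\cite[3.1]{GreenleesShipley14b} together with the assumption that $\C$ is cellular and proper.
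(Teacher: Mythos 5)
Your proof is correct and follows the same route as the paper: you verify the Quillen adjunction, use Lemma~\ref{lem:Tate} to get that the derived unit is a weak equivalence, check compactness of $G_\stan$, and apply the Cellularization Principle of Greenlees--Shipley. The only cosmetic difference is that you spell out the compactness of $G_\stan$ directly, whereas the paper delegates this to \cite[4.1]{GreenleesShipley14b}.
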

\begin{proof}
	The compact generators $\G$ of $\C$ get sent to compact
        objects of $\cpstan$ as in~\cite[4.1]{GreenleesShipley14b}. Since the derived unit is a weak equivalence by Lemma~\ref{lem:Tate}, the Cellularization Principle~\cite[2.7]{GreenleesShipley13} applies to give the result.
\end{proof}

The following theorem is the one-step case of the result of~\cite[9.5]{BalchinGreenlees}.
\begin{thm}\label{bifibrant in standard}
A bifibrant object $(V,N,W,f,g) \in \cpstan$ is bifibrant in $\cell{\G_\mathrm{stan}}\cpstan$ if and only if $f\colon L\Lambda\1 \otimes_{L\1} V \to W$ and $g\colon L\Lambda\1 \otimes_{\Lambda\1} N \to W$ are weak equivalences. As such, $\cell{\G_\mathrm{stan}}\cpstan$ is the strict homotopy limit of the diagram category in the sense of~\cite{bergner}.
\end{thm}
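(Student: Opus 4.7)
\medskip

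The plan is to exploit the Quillen equivalence $(-)_\stan \dashv \lim$ from Theorem~\ref{thm:adelic} together with the Cellularization Principle to characterize the bifibrant objects of $\cell{\G_\stan}\cpstan$ as precisely those objectwise weakly equivalent (in $\cpstan$) to $X_\stan$ for some $X \in \C$. The key structural input is that $\G_\stan$-cell complexes in $\cpstan$ all take the form $X_\stan$: since each of $L\1 \otimes -$, $\Lambda\1 \otimes -$ and $L\Lambda\1 \otimes -$ commutes with colimits, the homotopy colimit of any diagram of $G_\stan$-cells equals $X_\stan$ where $X$ is the corresponding homotopy colimit in $\C$.

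For the forward direction, suppose $Y$ is bifibrant in $\cell{\G_\stan}\cpstan$. Then $Y$ is objectwise weakly equivalent in $\cpstan$ to $X_\stan$ for some $X \in \C$ (cell-cofibrancy). For $X_\stan$, the structure maps $f,g$ are the canonical isomorphisms $L\Lambda\1 \otimes_{L\1}(L\1 \otimes X) \cong L\Lambda\1 \otimes X \cong L\Lambda\1 \otimes_{\Lambda\1}(\Lambda\1 \otimes X)$; by naturality across the objectwise equivalence, $f$ and $g$ on $Y$ are weak equivalences as well.

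For the reverse direction, assume $Y = (V,N,W,f,g)$ is bifibrant in $\cpstan$ with $f,g$ weak equivalences, and set $X := \lim Y$. I aim to show the natural map $X_\stan \to Y$ is a componentwise weak equivalence. The $L\1$-component $L\1 \otimes X \to V$ is obtained by tensoring the defining pullback $X = \hopullback(V \to W \leftarrow N)$ with $L\1$: smashingness of $L$ (Proposition~\ref{prop-tate cohomology functors}(b)) makes this preserve pullbacks, and the identifications $L\1 \otimes V \simeq V$ and $L\1 \otimes W \simeq W$ (both being $L$-local as modules over $L$-local rings) and $L\1 \otimes N \simeq L\Lambda\1 \otimes_{\Lambda\1} N \xrightarrow{g} W$ collapse the pullback to $V$. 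For the remaining two components, I use that in the stable category the defining pullback of $X$ and the Tate pullback of $X$ (Lemma~\ref{lem:Tate}) present $X$ as the same object in two ways; the induced map of cofiber sequences
\[
\begin{tikzcd}
X \ar[r] \ar[d, equal] & L\1 \otimes X \oplus \Lambda\1 \otimes X \ar[r] \ar[d] & L\Lambda\1 \otimes X \ar[d] \\
X \ar[r] & V \oplus N \ar[r] & W
\end{tikzcd}
\]
has equivalences on the outer columns (the left by construction, the right to be established), whereupon two-out-of-three gives the middle column and in particular the $\Lambda\1$-component $\Lambda\1 \otimes X \to N$. The $L\Lambda\1$-component $L\Lambda\1 \otimes X \to W$ is obtained by the parallel tensoring argument, using $L\Lambda\1 \otimes V \simeq L\Lambda\1 \otimes_{L\1} V \simeq W$ via $f$ together with a second 2-out-of-3 step.

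The main technical obstacle is that $\Lambda\1$ is not smashing, so the naive identifications $\Lambda\1 \otimes M \simeq M$ for $\Lambda\1$-modules (which would be the analogue of the clean $L$-local case) fail; this is what forces the detour through cofiber-sequence 2-out-of-3 rather than a direct tensor computation. Once the three components are established, $Y$ is objectwise weakly equivalent to the cell complex $X_\stan$, hence cell-cofibrant and therefore bifibrant in $\cell{\G_\stan}\cpstan$. The concluding identification with the strict homotopy limit then follows tautologically: bifibrant objects have been characterized as triples $(V, N, W)$ equipped with weak equivalences $L\Lambda\1 \otimes_{L\1} V \xrightarrow{\sim} W \xleftarrow{\sim} L\Lambda\1 \otimes_{\Lambda\1} N$, matching Bergner's description of the strict homotopy limit of the cospan $\mod{L\1} \xrightarrow{\theta_*} \mod{L\Lambda\1} \xleftarrow{\phi_*} \mod{\Lambda\1}$.
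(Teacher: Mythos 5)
Your overall strategy is sound and, as far as I can tell, essentially correct. The paper itself does not prove this statement directly (it cites \cite[9.5]{BalchinGreenlees}), so your argument is a reasonable direct proof, built on the Quillen equivalence of Theorem~\ref{thm:adelic}. The forward direction is clean: bifibrant objects of $\cell{\G_\stan}\cpstan$ are precisely the objects of the localizing subcategory $\loc(\G_\stan)$ of $h\cpstan$, which is the essential image of the derived $(-)_\stan$, so any such $Y$ is objectwise equivalent to some $X_\stan$, whose structure maps are isomorphisms; naturality transports the equivalence property of $f,g$ across the zig-zag.

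Two spots in the reverse direction are under-argued and should be filled in.  First, for the $L\Lambda\1$-component: the phrase ``parallel tensoring argument \dots together with a second 2-out-of-3 step'' hides the real work. Tensoring $X = \hopullback(N \to W \leftarrow V)$ with $L\Lambda\1$ gives a pullback whose leg $L\Lambda\1 \otimes N \to L\Lambda\1 \otimes W$ equals $L\Lambda\1 \otimes g^\#$; you need to check this is a weak equivalence, which follows by factoring $g^\#$ as $N \xrightarrow{\eta} L\Lambda\1 \otimes_{\Lambda\1} N \xrightarrow{g} W$ and noting that $L\Lambda\1 \otimes \eta$ becomes invertible since $L\Lambda\1 \otimes L\1 \simeq L\Lambda\1$.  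Then the counit $L\Lambda\1\otimes X \to W$ is the composite of the resulting pullback projection $L\Lambda\1 \otimes X \xrightarrow{\sim} L\Lambda\1\otimes V$ with the identification $L\Lambda\1\otimes V \simeq L\Lambda\1\otimes_{L\1} V \xrightarrow{f} W$, hence a weak equivalence. (An alternative, arguably cleaner, route: use compatibility of the counit with the structure map, giving the commuting square with top row the isomorphism $L\Lambda\1 \otimes_{L\1}(L\1\otimes X) \cong L\Lambda\1\otimes X$, left vertical $L\Lambda\1\otimes_{L\1}(\mathrm{counit}_{L\1})$, bottom $f$, and right $\mathrm{counit}_{L\Lambda\1}$; then 2-out-of-3 finishes.)  Second, the comparison of the two cofiber sequences needs a short naturality argument to justify that the left column is indeed the identity on $X$; this amounts to checking that the map from the Tate pullback square to the cospan $(V,N,W)$ furnished by the counit components is a map of pullback squares restricting to $\mathrm{id}_X$ at the initial vertex. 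Neither of these is a fatal gap, but both should be spelled out.
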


As a warm up for our approach to the torsion model in Section~\ref{sec:extended torsion model}, we observe that one can improve upon the above theorem as we now describe.

\begin{defn}
The category $\cpstane$ of \emph{extended pre-standard} objects is the full subcategory of $\cpstan$ consisting of objects $(V,N,W,f,g)$ for which the map $f \colon L \Lambda \1 \otimes_{L \1} V \to W$ is an isomorphism. Diagrammatically, the objects are those of the form
\[
\begin{tikzcd} & V \arrow[d, dashed, "\bullet" description] \\ N \arrow[r, dashed, "g"'] & W \end{tikzcd}
\]
where the dot on the arrow indicates that the map is an isomorphism after extension of scalars. By adjunction an object of $\cpstane$ is equivalently a $\Lambda \1$-module map $\beta \colon N \to L \Lambda \1 \otimes_{L \1} V$. 
\end{defn}

There is an evident inclusion $\cpstane \hookrightarrow \cpstan$. This admits a right adjoint $\Gamma_e \colon \cpstan \to \cpstane$ which is defined by $$\Gamma_e \left(\begin{tikzcd} & V \arrow[d, dashed, "f"] \\ N \arrow[r, dashed, "g"'] & W \end{tikzcd}\right) = \begin{tikzcd} & V \arrow[d, dashed, "\bullet" description] \\ N' \arrow[r] & L \Lambda \1 \otimes_{L \1} V \end{tikzcd}$$ 
where $N'$ is the pullback
\[\begin{tikzcd}
N' \arrow[r] \arrow[d] & L\Lambda\1 \otimes_{\Lambda\1} V \arrow[d, "f"] \\
N \arrow[r, "g^\#"'] & W
\end{tikzcd}
 \]
in the category of $\Lambda\1$-modules.

There exists a $\texttt{dimp}$ model structure on $\cpstane$, and a simple application of the Cellularization Principle~\cite{GreenleesShipley13} shows that $\cell{\G_\mathrm{stan}}\cpstan \simeq_Q \cell{\G_\mathrm{stan}}\cpstane$. 

\begin{defn}
The \emph{standard model} for $\C$ is the model category  $\C_s = \cell{\G_\mathrm{stan}}\cpstane$.
\end{defn}

Combining this with the previous results of this section we obtain the following.

\begin{thm}
There is a zig-zag of Quillen equivalences
\[\C \simeq_Q \C_s. \]
Moreover an object in the homotopy category of $\C_s$ is determined by the data of a $\Lambda \1$-module map $\beta\colon N \to L\Lambda\1 \otimes_{L\1} V$ such that $L\Lambda\1 \otimes_{\Lambda\1}\beta$ is an equivalence, where $N$ is a $\Lambda\1$-module and $V$ is an $L\1$-module.
\end{thm}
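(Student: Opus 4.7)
The plan is to assemble the two Quillen equivalences already prepared in this section and then read off the description of bifibrant objects. First I would invoke Theorem~\ref{thm:adelic} to get the Quillen equivalence
\[
\C \simeq_Q \cell{\G_\mathrm{stan}}\cpstan
\]
via $(-)_\stan \dashv \lim$. Next I would combine this with the Quillen equivalence $\cell{\G_\mathrm{stan}}\cpstan \simeq_Q \cell{\G_\mathrm{stan}}\cpstane = \C_s$ obtained just before the theorem statement by applying the Cellularization Principle to the inclusion--$\Gamma_e$ adjunction between the extended and full pre-standard categories. Composing these two equivalences produces the desired zig-zag.

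For the second assertion I would identify the bifibrant objects of $\C_s$. By the definition of $\cpstane$, every object $(V,N,W,f,g)$ of $\C_s$ already satisfies the condition that $f\colon L\Lambda\1 \otimes_{L\1} V \to W$ is an isomorphism, so we may replace $W$ by $L\Lambda\1 \otimes_{L\1} V$ throughout. The characterization of bifibrant objects of $\cell{\G_\mathrm{stan}}\cpstan$ in Theorem~\ref{bifibrant in standard} then pulls back along the inclusion $\cpstane \hookrightarrow \cpstan$ to say that a bifibrant object of $\C_s$ is precisely one for which the remaining map $g\colon L\Lambda\1 \otimes_{\Lambda\1} N \to L\Lambda\1 \otimes_{L\1} V$ is a weak equivalence. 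Under the $(\theta_\ast,\theta^\ast)$ adjunction this $g$ corresponds to a map $\beta \colon N \to L\Lambda\1 \otimes_{L\1} V$ of $\Lambda\1$-modules, and the weak equivalence condition on $g$ becomes exactly the requirement that $L\Lambda\1 \otimes_{\Lambda\1} \beta$ be an equivalence, as stated.

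I expect no real obstacle since the work has been done: the two Quillen equivalences are already in hand, and the bifibrant identification is a direct restriction of Theorem~\ref{bifibrant in standard} along the fully faithful inclusion $\cpstane \hookrightarrow \cpstan$. The only point requiring care is to check that the Cellularization Principle genuinely applies on the nose here, i.e.\ that the cellular generators $\G_{\mathrm{stan}}$ lie in the image of $\cpstane$ (which is clear since each generator $G_\mathrm{stan}$ is built from the Tate cospan tensored with $G \in \G$, so the top horizontal map is an identity) and that the derived unit and counit of the $(\iota,\Gamma_e)$ adjunction are weak equivalences on cellular objects. Once this is verified, the theorem follows by formal manipulation.
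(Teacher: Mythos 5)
Your proof is correct and follows the same approach as the paper, which states this theorem as an immediate consequence of Theorem~\ref{thm:adelic}, Theorem~\ref{bifibrant in standard}, and the Cellularization Principle equivalence $\cell{\G_\mathrm{stan}}\cpstan \simeq_Q \cell{\G_\mathrm{stan}}\cpstane$ established just before the statement. One small notational slip: the adjunction you use to pass from $g$ to $\beta$ is the $(\phi_*,\phi^*)$ adjunction for $\phi\colon \Lambda\1\to L\Lambda\1$, not $(\theta_*,\theta^*)$.
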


\section{The pre-torsion model}
We now move towards the torsion model. We note that the setup remains the same as in Section~\ref{sec:tatesquare}. Implicitly we have fixed a family of 
Balmer primes $V$ and a set of Koszul objects $\K=\K(V)$. 

\subsection{Enriching the splicing maps}
In algebra, any torsion module is naturally a module over the adic completion. This is the crucial difference between the adelic and Zariski models. 
The following lemma gives a version for our context. 

\begin{lem}\label{lem:torsion over complete ring}
There is a Quillen equivalence $\cell{(\K \otimes \G)}\C \simeq_Q \cell{(\Lambda \1 \otimes \K \otimes \G)}\mod{\Lambda \1}(\C).$ 
\end{lem}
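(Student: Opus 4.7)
The plan is to deduce the lemma directly from the Cellularization Principle of Greenlees--Shipley~\cite[2.7]{GreenleesShipley13} applied to the extension/restriction of scalars Quillen adjunction
\[
\Lambda \1 \otimes - \colon \C \rightleftarrows \mod{\Lambda \1}(\C) \colon U.
\]
This is a Quillen adjunction by Lemma~\ref{lem:restrictionleftQuillen}, and since $U$ preserves filtered colimits, the extension of scalars preserves compact objects, so $\Lambda \1 \otimes \K \otimes \G$ is indeed a set of compact (cofibrantly replaced) generators in $\mod{\Lambda \1}(\C)$. Hence, provided the derived unit of the adjunction is a weak equivalence on each object of $\K \otimes \G$, the Cellularization Principle immediately yields the desired Quillen equivalence of cellularizations.

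The key step is therefore to show that for every $K \in \K$ and $G \in \G$, the unit map
\[
K \otimes G \longrightarrow \Lambda \1 \otimes K \otimes G
\]
is a weak equivalence in $\C$. Since $\C$ is stable and $K \otimes G$ is cofibrant, it suffices to show that the fibre $F \otimes K \otimes G$ vanishes, where $F := \mathrm{fib}(\1 \to \Lambda \1)$.

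To compute $F$, I would appeal to Lemma~\ref{lem:Tate}: the Tate cospan gives a homotopy pullback
\[
\begin{tikzcd}
\1 \arrow[r] \arrow[d] & L \1 \arrow[d] \\
\Lambda \1 \arrow[r] & L \Lambda \1
\end{tikzcd}
\]
from which the vertical fibres agree, producing a weak equivalence $F \simeq \mathrm{fib}(L \1 \to L \Lambda \1)$. Both $L \1$ and $L \Lambda \1$ are $L$-local, so $F$ is $L$-local. Because $L$ is smashing (Proposition~\ref{prop-tate cohomology functors}(b)), this gives $F \simeq L \1 \otimes F$, and therefore
\[
F \otimes K \simeq L \1 \otimes K \otimes F \simeq L K \otimes F.
\]
Since $L = L_{V^c}$ is the nullification at $\K$, we have $L K \simeq 0$ for any $K \in \K$, so $F \otimes K \simeq 0$ and hence $F \otimes K \otimes G \simeq 0$ as required.

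I do not expect a serious obstacle: the adjunction and compactness conditions are standard, and the required vanishing of the fibre on Koszul objects is essentially a transparent consequence of the Tate pullback together with the fact that $L$ kills $\K$. The most delicate checking is ensuring the Cellularization Principle applies in the stated form — in particular that the generators $\K \otimes \G$ may be taken compact and cofibrant inside $\C$, and that their images $\Lambda \1 \otimes \K \otimes \G$ serve as the correct cellularization set on the module side — but this is formal given our standing hypotheses in Section~\ref{sec:models}.
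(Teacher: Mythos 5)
Your proof is correct and follows the paper's overall strategy exactly: apply the Cellularization Principle~\cite[2.7]{GreenleesShipley13} to the extension/restriction-of-scalars adjunction $\Lambda\1 \otimes - \dashv \theta^*$, check that the images of the generators are compact on the module side, and verify that the derived unit is an equivalence on $\K \otimes \G$. The only genuine divergence is in how you establish the last point. The paper argues on the ``torsion'' side: since each $K$ is torsion and $\Gamma$ is smashing, $K \otimes G \simeq \Gamma\1 \otimes K \otimes G$, and then the MGM equivalence $\Gamma\1 \xrightarrow{\sim} \Gamma\Lambda\1 = \Gamma\1 \otimes \Lambda\1$ gives the result directly. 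You instead argue on the ``local'' side: using the Tate pullback square you identify $F = \mathrm{fib}(\1 \to \Lambda\1)$ with $\mathrm{fib}(L\1 \to L\Lambda\1)$, hence $F$ is $L$-local, and because $L$ is smashing and nullifies $\K$, $F \otimes K \simeq F \otimes LK \simeq 0$. These are two faces of the same coin (both the MGM equivalence and the Tate square go back to~\cite[2.3]{Greenlees01b}), but your route makes the role of nullification explicit rather than relying on the reader to unwind the MGM statement; the paper's version is shorter.

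One small imprecision: to get compactness of $\Lambda\1 \otimes K \otimes G$ in the homotopy category of $\Lambda\1$-modules, what you actually need is that the restriction of scalars $\theta^*$ preserves arbitrary coproducts, so that $[\Lambda\1 \otimes K \otimes G, -]_{\Lambda\1} \cong [K \otimes G, \theta^*(-)]$ commutes with sums; phrasing this as ``$U$ preserves filtered colimits'' conflates the model-categorical accessibility notion with the triangulated notion of compactness that the Cellularization Principle requires. The conclusion is the same, but the precise statement is the one the paper uses.
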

\begin{proof}
There is a ring map $\theta\colon \1 \to \Lambda\1$ which induces a Quillen adjunction $$\Lambda\1 \otimes -: \C \rightleftarrows \mod{\Lambda\1}: \theta^*.$$ We apply the Cellularization Principle~\cite[2.7]{GreenleesShipley13} at the set of compact objects $\K \otimes \G$ in $\C$. The objects $\Lambda\1 \otimes K \otimes G$ are compact as $\Lambda\1$-modules for all $K \in \K$ and $G \in \G$ since the restriction of scalars functor preserves sums. The derived unit $K \otimes G \to \Lambda\1 \otimes K \otimes G$ is an equivalence for all $K \in \K$ and $G \in \G$ by the MGM equivalence,  since each $K$ is torsion and $\Gamma$ is smashing. Therefore the stated Quillen equivalence holds.
\end{proof}

 In light of the previous
 lemma, when considering torsion modules we can work over
 $\Lambda\1$. 

\begin{defn}\label{def:pt}
We define the \emph{pre-torsion} category $\cptors$  to be the category of  sections of the diagram category
$$
\begin{tikzcd}
	\mod{L\1} \arrow[d, "\theta_*"'] &  \\ 
	\mod{L\Lambda \1} \arrow[r, "\phi^*"'] &  \mod{\Lambda \1}.
\end{tikzcd}
$$
The vertical map $\theta_*$ is the extension of scalars $L \Lambda\1 \otimes_{L\1} -$ along the ring map $\theta\colon L\1 \to L\Lambda\1$, while the horizontal map is restriction of scalars along the ring map $\phi\colon\Lambda\1 \to L\Lambda\1$.
\end{defn}

\begin{rem}
  The categories $\cpstan$ and $\cptors$ are defined as the categories 
  of sections over two very similar diagrams. 
  The crucial difference lies in the map 
  $\varphi\colon\Lambda\1\to L\Lambda\1$: in the first case we extend 
  scalars along $\varphi$ whereas in the second we restrict scalars along 
  $\varphi$. 
\end{rem}

By Definition~\ref{defn:leftsections} an object in $\cptors$ is a quintuple $(V,W,T,f,g)$ where:
\begin{itemize}
\item  $V \in \mod{L\1}$;
\item $W \in \mod{L\Lambda\1}$;
\item $T \in \mod{\Lambda\1}$;
\item $f\colon \theta_\ast V \to W$ in $\mod{L \Lambda \1}$;
\item $g\colon \phi^*W \to T$ in $\mod{\Lambda \1}$.
\end{itemize}
The morphisms in $\cptors$ are triples $(\alpha\colon V \to V', \beta\colon W \to W', \gamma\colon T \to T')$ such that the evident squares commute. 

As in the pre-standard model, we will often denote this object by
\begin{center}
	\begin{tikzcd}
	V \arrow[d, dashed, "f"'] & \\ W \arrow[r, "g"'] & T
	\end{tikzcd}
\end{center}
where we have not made the horizontal morphism dashed as we will rarely make any reference to the restriction of scalars.

In order to equip $\cptors$ with a model structure, we require both $\theta_\ast$ and $\phi^\ast$ to both be Quillen of the same handedness. In general, $\theta_\ast$ will not be a right adjoint, and so we view both the functors as left adjoints.

Recall from Lemma~\ref{lem:restrictionleftQuillen} that in the projective model structure on modules, $\theta_\ast$ is always left Quillen, whereas $\phi^\ast$ is left Quillen if and only if $L\Lambda\1$ is cofibrant as an $\Lambda\1$-module. We now describe how to pick models for $L\1$, $\Lambda\1$ and $L\Lambda\1$ with this property. This will allow us to endow $\cptors$ with the \texttt{dimp} model structure from Notation~\ref{nota:dimp}. There are two approaches depending on whether the commutativity of the rings is important. 

\subsection{Non-commutative case}\label{sec:nc}
We may choose a model for $L\1$ which is cofibrant as a ring, and therefore as an object of $\C$ by~\cite[4.1(3)]{SchwedeShipley00}. Tensoring the map $\1 \to L\1$ with $\Lambda\1$ gives a ring map $\Lambda\1 \to L\1 \otimes \Lambda\1$ which is a cofibration of $\Lambda\1$-modules. As $\Lambda\1$ is a cofibrant $\Lambda\1$-module, this shows that $L\1 \otimes \Lambda\1$ is a cofibrant $\Lambda\1$-module. Since $L$ is smashing, $L\1 \otimes \Lambda\1$ is a model for $L\Lambda\1$, and since it is a cofibrant $\Lambda\1$-module, the restriction of scalars functor $\phi^*\colon \mod{L\Lambda\1} \to \mod{\Lambda\1}$ is left Quillen as desired.

We emphasise that since we took cofibrant replacements as rings/algebras, the resulting objects need
not admit a commutative multiplication. However, we give an alternative approach in the next subsection. 

\subsection{Commutative case}\label{sec:convenient model structure}
In many cases of interest, the rings at play in the torsion model are
actually \emph{commutative}. For example, if $\bbT$ is the circle group, and $\F$ denotes the family of finite subgroups of $\mathbb{T}$, then $\widetilde{E}\F$ is a (genuinely) commutative ring $\mathbb{T}$-spectrum~\cite{Greenlees20b}. If $L\1$ and $\Lambda\1$ are commutative, and the category supports a \emph{convenient} model structure, then we can preserve commutativity in the torsion model.

\begin{defn}\label{defn:convenientmodelstructure}
	Let $\C$ be a monoidal model category and suppose that there exists another monoidal model structure $\widetilde{\C}$ on the same underlying category as $\C$, which has the same weak equivalences and for which the identity functor $\widetilde{\C} \to \C$ is left Quillen. We say that $(\C, \widetilde{\C})$ is \emph{convenient} if the forgetful functor $\calg{S}(\widetilde{\C}) \to \mod{S}(\C)$ preserves cofibrant objects, for all commutative monoids $S \in \C$. 
\end{defn}

\begin{exs}
The following pairs of model structures are all convenient:
\begin{enumerate}
\item (projective, projective) on chain complexes over a field of characteristic zero, see for instance~\cite[\S 5.1]{White17}.
\item (stable, positive stable) model structure on spectra is \emph{not} convenient. However, Shipley~\cite[4.1]{Shipley04} constructs a flat and positive flat model structure on spectra so that (flat, positive flat) is convenient. We note that there is a left Quillen equivalence from the stable model structure to the flat model structure.
\item The previous example also works for equivariant spectra. More
  precisely, the pair of model structures (flat, positive flat)  on
  equivariant spectra constructed in~\cite[4.2.15]{BrunDundasStolz}
  is convenient.
\end{enumerate}
\end{exs}

In order to ensure that the restriction of scalars functor $\phi^*$ is left Quillen, we first take a model of $L\1$ which is cofibrant as a commutative ring in the model structure right lifted from $\widetilde{\C}$. If $(\C, \widetilde{\C})$ is convenient then this model of $L\1$ is also cofibrant as an object of $\C$. Therefore the ring map $\Lambda\1 \to L\1 \otimes \Lambda\1$ is a cofibration of $\Lambda\1$-modules, and as such the restriction of scalars along this map is left Quillen.

%One sees that in both the non-commutative and commutative cases we take $L\1 \otimes \Lambda\1$ as our model for $L\Lambda\1$, once we have chosen an appropriately cofibrant model of $L\1$. From now our notation will refect this chosen model of $L\Lambda\1$, so that in particular the definition of the pre-torsion category (Definition~\ref{def:pt}) will have a $L\1 \otimes \Lambda\1$ in the second vertex instead of $L\Lambda\1$.  \stodo{If we are truly going to go with this notation, then this should come right at the start of the section! I am more in favour of the agnostic approach and sticking with $L \Lambda \1$ for readability purposes.}

\subsection{Constructing the pre-torsion model} We now define an adjunction between $\C$ and $\cptors$ and show that after a suitable cellularization this gives a Quillen equivalence. This is the first step towards the torsion model constructed in the next section.

%The previous two subsections described how to pick suitable models for $L\1$, $L\Lambda\1$ and $\Lambda\1$. There will be other choices of models which also are suitable, and so henceforth our notation will remain model-agnostic. \jtodo{needs explaining better} 

We define a functor $(-)_\mathrm{tors}\colon \C \to \cptors$ by \[X_\mathrm{tors} = \left(\begin{tikzcd}
L\1 \otimes X \arrow[d, dashed] & \\
L\Lambda\1 \otimes X \arrow[r] & \Sigma \Gamma\1 \otimes \Lambda \1 \otimes X
\end{tikzcd} \right)\]
where the vertical map is induced by the identity and the horizontal map is induced by 
the triangle $\Lambda \1 \to L\Lambda \1 \to \Sigma \Gamma\Lambda\1$.

\begin{rem}\label{rem:abuse}
In light of Lemma~\ref{lem:torsion over complete ring} and the MGM equivalence $\Gamma \simeq \Gamma\Lambda$, any torsion module is equivalent to a $\Lambda\1$-module. As such, the object $\Sigma \Gamma \1 \otimes \Lambda \1 \otimes X$ is equivalent to $\Sigma \Gamma X$.
\end{rem}

The functor $(-)_\mathrm{tors}\colon \C \to \cptors$ has a right adjoint $\mathsf{R}\colon \cptors \to \C$ defined by sending the object \[\begin{tikzcd}V \arrow[d, dashed, "f"'] & \\ W \arrow[r, "g"'] & T\end{tikzcd}\] to the pullback of the cospan \[\begin{tikzcd}
 & V \arrow[d, "f^\#"] \\
 \mathrm{fib}(g\colon W \to T) \arrow[r] & W.
\end{tikzcd}\]
Note that $f\colon L\Lambda\1 \otimes_{L\1} V \to W$ is adjunct of a map $f^\#\colon V \to W$ of $L\1$-modules. Therefore, applying the forgetful functors we view the cospan as a diagram in $\C$ and take the pullback in $\C$. 

\begin{rem}\label{rem-derived-equivalence}
Equivalently, the functor $\mathsf{R}$ is the fibre of the composite $g f^\#$. 
Using this description one sees that the derived unit $X \to \mathsf{R}(X_\tors)$ is an equivalence for all $X \in \C$ since the fibre of the canonical map $L\1 \otimes X \to \Sigma \Gamma \1 \otimes \Lambda \1 \otimes X$ is $X$ by the MGM equivalence. 
\end{rem}

\begin{lem}\label{lem:1torssmall}
The objects in $\G_\mathrm{tors}$ are compact in the homotopy category of $\cptors$. 
\end{lem}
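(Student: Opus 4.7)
The plan is to reduce compactness of $G_\mathrm{tors}$ in $h\cptors$ to compactness of $G$ in $h\C$ via the adjunction $(-)_\mathrm{tors} \dashv \mathsf{R}$. First I would verify that $(-)_\mathrm{tors}$ is left Quillen with respect to the \texttt{dimp} model structure: since cofibrations in $\cptors$ are determined objectwise in the projective module structures at each vertex, it suffices to check that the three component functors $L\1 \otimes -$, $L\Lambda\1 \otimes -$, and $\Sigma\Gamma\1 \otimes \Lambda\1 \otimes -$ send cofibrations of $\C$ to projective cofibrations of modules. The first two are extensions of scalars and hence left Quillen by Lemma~\ref{lem:restrictionleftQuillen}, while the third follows from the same reasoning once one uses a cofibrant model of $\Gamma\1$. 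With the adjunction left derived, for any bifibrant $X \in \cptors$ we obtain a natural isomorphism
\[
[G_\mathrm{tors}, X]_{h\cptors} \cong [G, \mathsf{R}X]_{h\C}.
\]

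Next I would argue that $\mathsf{R}$ preserves arbitrary coproducts up to weak equivalence. Coproducts in $\cptors$ are computed componentwise, and by construction $\mathsf{R}X$ is the homotopy pullback in $\C$ of the cospan $V \to W \leftarrow \mathrm{fib}(W \to T)$, a finite homotopy limit. Since $\C$ is stable, homotopy pullbacks agree with homotopy pushouts up to suspension, and homotopy pushouts commute with arbitrary coproducts; consequently
\[
\mathsf{R}\!\left(\coprod_i X_i\right) \simeq \coprod_i \mathsf{R}(X_i).
\]

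Combining the two observations with compactness of $G$ in $h\C$ yields
\[
\left[G_\mathrm{tors}, \coprod_i X_i\right]_{h\cptors} \cong \left[G, \coprod_i \mathsf{R}X_i\right]_{h\C} \cong \coprod_i [G, \mathsf{R}X_i]_{h\C} \cong \coprod_i [G_\mathrm{tors}, X_i]_{h\cptors},
\]
which proves compactness. The main obstacle is the commutation of $\mathsf{R}$ with arbitrary coproducts, and this is precisely where stability of $\C$ is essential; the left Quillen verification is routine once the cofibrant replacements chosen in Section~\ref{sec:nc} (or Section~\ref{sec:convenient model structure}) are in place.
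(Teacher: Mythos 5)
Your proof is correct, but it takes a genuinely different route from the paper's. The paper introduces the left adjoints $a$ and $b$ to the evaluation functors $\cptors \to \mod{L\1}$ and $\cptors \to \mod{\Lambda\1}$, observes that they are left Quillen (cofibrations and weak equivalences being objectwise) and that their derived functors preserve compacts because derived evaluation preserves sums, and then exhibits $G_\mathrm{tors}$ as the cofibre of a map $b(\Lambda\1 \otimes G) \to a(L\1 \otimes G)$. Compactness follows since compacts are closed under cones. Your approach instead works directly with the adjunction $((-)_\mathrm{tors}, \mathsf{R})$: you show the derived right adjoint $\mathbb{R}\mathsf{R}$ preserves coproducts and then appeal to compactness of $G$ in $h\C$. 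The crucial fact you use is that $\mathbb{R}\mathsf{R}$ is (up to natural equivalence) a finite homotopy limit built from the components, which preserves coproducts because $\C$ is stable (finite homotopy limits become finite homotopy colimits up to shift); plus the observation that coproducts and weak equivalences in $\cptors$ are objectwise. Both proofs ultimately trade on the same two facts, objectwise coproducts and stability, but the paper packages them as an explicit cell decomposition while you package them as preservation of coproducts by the right adjoint; the paper's version is a little more self-contained because it does not require identifying the derived $\mathsf{R}$ with a homotopy pullback, which in your argument is a small point one should check against the \texttt{dimp} fibrancy conditions (the correct statement is that a square in a stable category is a homotopy pullback if and only if it is a homotopy pushout, rather than that they ``agree up to suspension''). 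Your approach has the mild advantage of not needing to introduce the auxiliary functors $a$ and $b$, but the paper's is slightly more elementary in that it reduces directly to closure of compacts under triangles.
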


\begin{proof}
The evaluation functors $\cptors \to \mod{L\1}$ and $\cptors \to \mod{\Lambda\1}$ have left adjoints defined by 
\[a(V)=\left(\begin{tikzcd} V \arrow[d, dashed] & \\ L\Lambda\1 \otimes_{L\1} V \arrow[r] & L\Lambda\1 \otimes_{L\1} V \end{tikzcd}\right)
\mbox{ and } b(T)=\left(\begin{tikzcd} 0 \arrow[d, dashed] & \\ 0 \arrow[r] & T \end{tikzcd}\right)\] respectively. It is clear from the definitions of $a$ and $b$ that they preserve cofibrations and weak equivalences since these are defined objectwise. Therefore $a$ and $b$ are left Quillen. Note that the derived functors of $a$ and $b$ preserve compact objects since the derived evaluation preserves sums. Since cofibre sequences are vertexwise, for all $G \in \G$ we have a cofibre sequence
$$b(\Lambda \1 \otimes G)\to a(L\1\otimes G)\to G_\mathrm{tors}$$ 
induced by the triangle $\Lambda \1 \to L\Lambda \1\to \Sigma \Gamma\Lambda\1$ on the 
right bottom vertex. This shows that $G_\mathrm{tors}$ is compact in the homotopy category of $\cptors$.
\end{proof}

We can now prove the main theorem of this section. 
\begin{thm}\label{thm:pretorsion} 
	The adjunction $(-)_\mathrm{tors}: \C \rightleftarrows \cell{\G_\mathrm{tors}}\cptors:\mathsf{R}$ is a Quillen equivalence.
\end{thm}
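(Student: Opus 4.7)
The plan is to follow the template of Theorem~\ref{thm:adelic} and apply the Cellularization Principle of Greenlees--Shipley~\cite[2.7]{GreenleesShipley13} to the adjunction $(-)_\mathrm{tors} \dashv \mathsf{R}$ at the set $\G$ of compact generators of $\C$. This requires three ingredients: that the adjunction is Quillen, that the images $G_\mathrm{tors}$ are compact in $h\cptors$, and that the derived unit $G \to \mathsf{R}(G_\mathrm{tors})$ is a weak equivalence for each $G \in \G$.

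First I would check that $(-)_\mathrm{tors} \dashv \mathsf{R}$ is a Quillen adjunction. Since weak equivalences and cofibrations in the \texttt{dimp} structure on $\cptors$ are detected objectwise in the projective model structures on the three module categories, this reduces to checking that tensoring with the chosen cofibrant models of $L\1$, $L\Lambda\1$ and $\Sigma \Gamma\1 \otimes \Lambda\1$ defines left Quillen functors into $\mod{L\1}$, $\mod{L\Lambda\1}$ and $\mod{\Lambda\1}$ respectively; this is standard, using the cofibrant replacements arranged in Section~\ref{sec:nc} (or Section~\ref{sec:convenient model structure}). Compactness of $G_\mathrm{tors}$ in $h\cptors$ is precisely Lemma~\ref{lem:1torssmall}.

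The heart of the argument is the derived unit. Unwinding the definition of $\mathsf{R}$, the object $\mathsf{R}(G_\mathrm{tors})$ is the homotopy pullback in $\C$ of
\[
L\1 \otimes G \longrightarrow L\Lambda\1 \otimes G \longleftarrow \mathrm{fib}\bigl(L\Lambda\1 \otimes G \to \Sigma\Gamma\1 \otimes \Lambda\1 \otimes G\bigr).
\]
The map $L\Lambda\1 \to \Sigma\Gamma\1 \otimes \Lambda\1$ used in the construction of $(-)_\mathrm{tors}$ is the connecting map of the cofibre sequence obtained by applying $- \otimes \Lambda\1$ to $\Gamma\1 \to \1 \to L\1$ and using the MGM identification $\Gamma\1 \otimes \Lambda\1 \simeq \Gamma\Lambda\1 \simeq \Gamma\1$. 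Tensoring further with $G$, the fibre in question is therefore $\Lambda\1 \otimes G$. Consequently, $\mathsf{R}(G_\mathrm{tors})$ is weakly equivalent to the homotopy pullback of the Tate cospan $L\1 \otimes G \to L\Lambda\1 \otimes G \leftarrow \Lambda\1 \otimes G$, which by Lemma~\ref{lem:Tate} is $G$. A naturality check then identifies this equivalence with the derived unit.

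The main subtlety will be the correct identification of the connecting map $L\Lambda\1 \otimes G \to \Sigma\Gamma\1 \otimes \Lambda\1 \otimes G$ with the boundary map from the Tate cofibre sequence so that its fibre is genuinely $\Lambda\1 \otimes G$ (not merely abstractly equivalent to it) and so that the resulting square is the Tate pullback square of Lemma~\ref{lem:Tate}; this is where Remark~\ref{rem:abuse} and the MGM equivalence from Proposition~\ref{prop-tate cohomology functors}(d) must be used coherently. Once these three conditions are in place, the Cellularization Principle immediately yields the Quillen equivalence between $\C$ and $\cell{\G_\mathrm{tors}}\cptors$.
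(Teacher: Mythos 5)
Your proposal follows essentially the same route as the paper: verify the Quillen adjunction objectwise (using that extension of scalars is left Quillen and that $\Gamma\1$ is cofibrant), invoke Lemma~\ref{lem:1torssmall} for compactness, identify the derived unit with the Tate pullback of Lemma~\ref{lem:Tate}, and conclude via the Cellularization Principle. The only difference is that you spell out the identification of $\mathrm{fib}(L\Lambda\1\otimes G \to \Sigma\Gamma\1\otimes\Lambda\1\otimes G)$ with $\Lambda\1\otimes G$ more explicitly than the paper's terse ``easily seen,'' which is a helpful elaboration rather than a deviation.
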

\begin{proof}
We first verify that this is a Quillen adjunction and then apply the Cellularization Principle~\cite{GreenleesShipley13} at the set $\G$ of compact generators of $\C$. Recall that $\cptors$ is equipped with the \texttt{dimp} model structure, so we must check that $(-)_\mathrm{tors}$ sends cofibrations (resp., acyclic cofibrations) in $\C$ to objectwise cofibrations (resp., acyclic cofibrations). It is clear for the $L\1$ and $L\Lambda\1$ coordinates since extension of scalars is left Quillen. In the final coordinate $(-)_\mathrm{tors}$ is given by tensoring with $\Sigma\Gamma\1 \otimes \Lambda\1 \otimes -$. Therefore we must show that $\Sigma\Gamma\1 \otimes \Lambda\1 \otimes -$ sends cofibrations (resp., acyclic cofibrations) in $\C$ to cofibrations (resp., acyclic cofibrations) of $\Lambda\1$-modules. As extension of scalars along $\1 \to \Lambda\1$ is left Quillen, it suffices to show that $\Gamma \1 \otimes -$ is left Quillen as an endofunctor of $\C$. This is true since $\Gamma\1$ is cofibrant in $\C$ by definition of the functor $\Gamma$. Therefore, $(-)_\mathrm{tors}: \C \rightleftarrows \cptors : \mathsf{R}$ is a Quillen adjunction.

By Lemma~\ref{lem:1torssmall}, $\G_\mathrm{tors}$ is a set of compact objects of the homotopy category of $\cptors$. The derived unit of the Quillen adjunction \[(-)_\mathrm{tors}: \C \rightleftarrows \cptors : \mathsf{R}\] is a weak equivalence by 
Remark~\ref{rem-derived-equivalence}. Therefore the Cellularization Principle shows that the stated Quillen equivalence holds.
\end{proof}

\subsection{Relation to the pre-standard model}\label{sec:adelic vs torsion}
The adjunction $(-)_\mathrm{tors}: \C \rightleftarrows \cptors : \mathsf{R}$ factors through the pre-standard category $\cpstan$ as follows. We define a functor $\mathrm{hcof}\colon \cpstan \to \cptors$ by $$\mathrm{hcof} \left(\begin{tikzcd} & V \arrow[d, dashed] \\ N \arrow[r, dashed] & W \end{tikzcd}\right) = \begin{tikzcd} V \arrow[d, dashed] & \\ W \arrow[r] 
	& \mathrm{cof}(N \to W)\end{tikzcd}$$ by taking the horizontal cofibre in the category of $\Lambda\1$-modules. Note that by adjunction, the map $L\Lambda\1 \otimes_{\Lambda\1} N \to W$ defines a map of $\Lambda\1$-modules $N \to W$, and it is this map whose horizontal cofibre we take.
Similarly, one defines a functor $\mathrm{hfib}\colon \cptors \to \cpstan$ by taking the horizontal fibre as $\Lambda\1$-modules. One notes that the left and right adjoint of the composite adjunction
	\[
	\begin{tikzcd}
		\C \arrow[r, yshift=1mm, "(-)_\stan"] & \cpstan \arrow[r, "\mathrm{hcof}", yshift=1mm] \arrow[l, yshift=-1mm, "\mathrm{lim}"] & \cptors. \arrow[l, yshift=-1mm, "\mathrm{hfib}"]
	\end{tikzcd}\] 
are $(-)_\mathrm{tors}=\mathrm{(hcof)}\circ (-)_\stan$ and
$\mathsf{R} =\mathrm{(lim)}\circ \mathrm{(hfib)}$.

It is also easy to see that these functors give a Quillen equivalence $\mathrm{hcof}\colon \cpstan \rightleftarrows \cptors:\mathrm{hfib}$ and hence a Quillen equivalence \[\mathrm{hcof}\colon \cell{ \G_\stan}\cpstan \rightleftarrows \cell{\G_\mathrm{tors}}\cptors:\mathrm{hfib}\] relating the models for $\C$ described in Theorems~\ref{thm:adelic} and~\ref{thm:pretorsion}.

\section{The torsion model}\label{sec:extended torsion model}
In this section we provide a Quillen equivalence between the model
category $\cell{\G_\mathrm{tors}}\cptors$ considered in the previous
section and a cellularization of a certain subcategory $\cptorse$. The
cellularization  of $\cptorse$ is the \emph{torsion model} $\ctors$. This description will allow us to understand the homotopy category of the torsion model.

\begin{defn} 
The category  $\cptorse$ of \emph{extended pre-torsion} objects is the full subcategory of $\cptors$ consisting of objects $(V,W,T,f,g)$ for which the map $f\colon L\Lambda\1 \otimes_{L\1} V \to W$ is an isomorphism. Diagramatically, the objects are those of the form
\[\begin{tikzcd}
V \arrow[d, dashed, "\bullet" description] & \\ W \arrow[r, "g"']
& T 
\end{tikzcd}\]
where the dot on the arrow indicates that the map is an isomorphism after extension of scalars. For brevity, and in line with \cite{Greenlees99}, we will often say that an object of the extended pre-torsion model is a $\Lambda\1$-module map $\beta\colon L\Lambda \1 \otimes_{L\1} V \to T$. In this notation a map is a commutative square 
\[
\begin{tikzcd}
L\Lambda\1 \otimes_{L\1} V \arrow[d,"L \Lambda\1\otimes_{L\1} \varphi"'] 
\arrow[r,"\beta"] & T \arrow[d, "\psi"] \\
L \Lambda\1 \otimes_{L\1} W \arrow[r,"\gamma"'] & U.
\end{tikzcd}
\]
\end{defn}

There is an adjunction 
\[ \begin{tikzcd}\cptors \arrow[r, "\Gamma_e"', yshift=-1mm] & \cptorse \arrow[l, "i"', yshift=1mm]\end{tikzcd}\]
where $i$ is the evident inclusion and $\Gamma_e$ is defined by 
$$\Gamma_e \left(\begin{tikzcd} V \arrow[d, dashed, "f"'] &  \\ W \arrow[r, "g"'] & T \end{tikzcd}\right) = \begin{tikzcd} V \arrow[d, dashed, "\bullet" description] &  \\ L \Lambda \1 \otimes_{L \1} V \arrow[r, "gf"'] & T. \end{tikzcd}$$
We note that $\cptorse$ has all limits and colimits. Colimits are calculated in $\cptors$ 
whereas limits are obtained by applying the functor $\Gamma_e$ to the limits in $\cptors$.
By~\cite[2.1]{Haraguchi15}, we may endow the category $\cptorse$ with the $\texttt{dimp}$ model structure, see Notation~\ref{nota:dimp}. 

\begin{lem}\label{lem-extended-torsion-model}
There is a Quillen equivalence
\[\begin{tikzcd}\cell{\G_\mathrm{tors}}\cptors \arrow[r, "\Gamma_e"', yshift=-1mm] & \cell{\G_\mathrm{tors}}\cptorse. \arrow[l, "i"', yshift=1mm]\end{tikzcd}\]
\end{lem}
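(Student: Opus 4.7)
The strategy is to apply the Cellularization Principle \cite[2.7]{GreenleesShipley13} to the adjunction $(i, \Gamma_e)$, where $i$ is left adjoint and $\Gamma_e$ is right adjoint. The proof decomposes into three checks: that $(i,\Gamma_e)$ is already a Quillen adjunction on the uncellularized categories, that the cellularization sets on the two sides agree, and that the derived unit is an equivalence on the cells.

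First I would observe that $i$ is left Quillen. Both $\cptors$ and $\cptorse$ carry the \texttt{dimp} model structure in which weak equivalences and cofibrations are detected objectwise in the projective model structure on modules. Since $i$ is a fully faithful inclusion that does not alter the underlying triple $(V,W,T)$ or the structure maps of an object, it preserves cofibrations and acyclic cofibrations tautologically. Hence $(i,\Gamma_e)$ is a Quillen adjunction.

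Next I would check that the cellularization sets match under $i$. Each generator $G_\mathrm{tors} = (L\1 \otimes G,\, L\Lambda\1 \otimes G,\, \Sigma\Gamma\1 \otimes \Lambda\1 \otimes G,\, f,\, g)$ has first structure map $f$ equal to the canonical isomorphism $L\Lambda\1 \otimes_{L\1}(L\1 \otimes G) \xrightarrow{\sim} L\Lambda\1 \otimes G$, so $G_\mathrm{tors}$ already lies in the subcategory $\cptorse$. Consequently $i(\mathcal{G}_\mathrm{tors}) = \mathcal{G}_\mathrm{tors}$, so both cellularizations are taken at the same set. Compactness of $\mathcal{G}_\mathrm{tors}$ in $\mathrm{h}\cptors$ is Lemma~\ref{lem:1torssmall}, and since $i$ is a left adjoint it preserves sums, which transfers compactness to $\mathrm{h}\cptorse$.

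Finally I would verify the derived-unit condition. For any $X \in \cptorse$ the inclusion $iX$ has $f = \mathrm{id}$, so applying $\Gamma_e$ returns $X$ itself and the unit $X \to \Gamma_e i X$ is the identity. In particular the unit is a weak equivalence on every element of $\mathcal{G}_\mathrm{tors}$, and~\cite[2.7]{GreenleesShipley13} immediately delivers the Quillen equivalence as stated. There is no real obstacle in this argument: the lemma is essentially the observation that $\Gamma_e$ is a strict retraction onto a subcategory that already contains the cells, so the cellularized adjunction becomes an equivalence for free.
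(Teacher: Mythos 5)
Your proof follows the same overall strategy as the paper: apply the Cellularization Principle to $(i, \Gamma_e)$, observe that $i$ is left Quillen because cofibrations and weak equivalences are detected objectwise, note that $\G_\mathrm{tors}$ already lies in $\cptorse$ so $i(\G_\mathrm{tors}) = \G_\mathrm{tors}$, establish compactness, and check the derived unit on the cells. This is exactly what the paper does.

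One step, however, is not quite right. You claim compactness of $\G_\mathrm{tors}$ in $h\cptorse$ by arguing that ``$i$ is a left adjoint, so it preserves sums, which transfers compactness.'' Preservation of coproducts by $Li$ is not enough to carry compactness backwards from $h\cptors$ to $h\cptorse$: for that you would also need $Li$ to be fully faithful at the level of homotopy categories, so that $[G, \bigoplus X_j]_{h\cptorse}$ is identified with $[iG, \bigoplus iX_j]_{h\cptors}$ — and establishing that fully faithfulness amounts to a version of the very statement under proof. The paper's route is simpler and avoids the circularity: the proof of Lemma~\ref{lem:1torssmall} applies verbatim inside $\cptorse$, because the left Quillen functors $a$ and $b$ constructed there already take values in $\cptorse$ (the relevant structure map $f$ is an isomorphism both for $a(V)$ and for $b(T)$), so the same cofibre sequence $b(\Lambda\1\otimes G) \to a(L\1 \otimes G) \to G_\mathrm{tors}$ witnesses compactness of $G_\mathrm{tors}$ directly in $h\cptorse$.

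A smaller point: you observe the strict unit $X \to \Gamma_e i X$ is the identity, but the Cellularization Principle requires the \emph{derived} unit, namely $X \to \Gamma_e R(iX)$ for a fibrant replacement $R$ in $\cptors$. This is still a weak equivalence (the fibrant replacement is an objectwise acyclic cofibration, and $\Gamma_e$ acts as the identity in the $V$ and $T$ slots and applies extension of scalars — a left Quillen functor — to the $V$-component in the $W$ slot), so the conclusion stands, and the paper also dismisses this as ``clear''; but the distinction is worth keeping in mind.
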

\begin{proof}
Since the weak equivalences and cofibrations are determined objectwise in the diagram injective model structure, it is clear that the inclusion $i$ preserves them. Therefore, the adjunction (without the cellularizations) is a Quillen adjunction. Since $i$ sends $\G_\mathrm{tors}$ to $\G_\mathrm{tors}$, the Quillen adjunction passes to the cellularizations.
By Lemma~\ref{lem:1torssmall}, $\G_\mathrm{tors}$ is a set of compact objects in $\cptors$, and it follows similarly that the objects of $\G_\mathrm{tors}$ are also compact in $\cptorse$. Therefore by the Cellularization Principle~\cite{GreenleesShipley13} it is enough to show that the derived unit is a weak equivalence on $\G_\mathrm{tors}$ which is clear.
\end{proof}

\begin{defn}
  The \emph{torsion model} of $\C$ is the model category 
  \[
  \C_t=\cell{\G_\mathrm{tors}}\cptorse.
  \]
\end{defn}

Combining the results of Theorem~\ref{thm:pretorsion} and Lemma~\ref{lem-extended-torsion-model} gives the following statement.
\begin{thm}\label{thm:torsionmodel}
There is a zig-zag of Quillen equivalences $\C \simeq_Q \C_t$.
\end{thm}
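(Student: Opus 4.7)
The statement is essentially the composition of two earlier results, so the plan is simply to assemble them. First, I would invoke Theorem~\ref{thm:pretorsion}, which establishes the Quillen equivalence
\[
(-)_\mathrm{tors} : \C \rightleftarrows \cell{\G_\mathrm{tors}}\cptors : \mathsf{R}
\]
between $\C$ and the cellularized pre-torsion category. This is the substantive step: the derived unit is an equivalence by the Tate pullback square (Lemma~\ref{lem:Tate}), and the Cellularization Principle then upgrades the objectwise Quillen adjunction to a Quillen equivalence on cellularizations at $\G_\mathrm{tors}$.

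Next, I would invoke Lemma~\ref{lem-extended-torsion-model}, which furnishes the second Quillen equivalence
\[
\Gamma_e : \cell{\G_\mathrm{tors}}\cptors \rightleftarrows \cell{\G_\mathrm{tors}}\cptorse : i,
\]
where the right-hand side is precisely $\C_t$ by definition.

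Concatenating these two Quillen equivalences produces the required zig-zag $\C \simeq_Q \C_t$. There is no obstacle here beyond checking the composition is formally legitimate, since the ambient model structures and cellularizing sets $\G_\mathrm{tors}$ match up on the common vertex $\cell{\G_\mathrm{tors}}\cptors$. In particular, no new calculation with the Tate cospan or with the functors $\Gamma$, $L$, $\Lambda$ is needed at this stage; all the work was done in Theorem~\ref{thm:pretorsion} (verifying the Quillen adjunction $((-)_\mathrm{tors}, \mathsf{R})$ via the left Quillen property of extension of scalars and cofibrancy of $\Gamma\1$) and in Lemma~\ref{lem-extended-torsion-model} (verifying that the inclusion $i$ preserves cofibrations and weak equivalences objectwise and that the derived unit is an equivalence on $\G_\mathrm{tors}$). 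The proof is therefore a one-line citation of these two results composed in sequence.
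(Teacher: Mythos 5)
Your proof is correct and matches the paper's own argument exactly: the paper also proves this by combining Theorem~\ref{thm:pretorsion} with Lemma~\ref{lem-extended-torsion-model} in sequence, with the definition $\C_t = \cell{\G_\mathrm{tors}}\cptorse$ making the conclusion immediate. Nothing further is needed.
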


We now turn our attention to identifying the homotopy category of the torsion model $\C_t$. Throughout we will use the following observation.

\begin{rem} 
If $\L$ is a set of compact objects, then the homotopy category of $\cell{\L}\C$ can be identified with the localizing subcategory of $h\C$ generated by $\L$, see~\cite[2.5, 2.6]{GreenleesShipley13}.
\end{rem} 

\begin{lem}\label{cor:gamma and K}
An object $X \in \C$ is $(\Gamma\1\otimes \G)$-cellularly acyclic  if and only if it is $(\K \otimes \G)$-cellularly acyclic. Therefore, the cellularizations of $\C$ at the sets $(\Gamma \1 \otimes \G)$ and $(\K \otimes \G)$ are equal as model categories.

\end{lem}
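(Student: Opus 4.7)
The plan is to translate cellular acyclicity into vanishing of internal hom objects and then exploit the torsion property of the Koszul objects together with a standard localizing-subcategory argument. Throughout I write $\underline{\Hom}$ for the internal hom in $\C$.

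First, since $\G$ is a set of compact generators and $\C$ is stable, for any object $A$ one has
\[
\map(A\otimes G,X)\simeq \map(G,\underline{\Hom}(A,X))
\]
by the tensor-hom adjunction, so $X$ is $(A\otimes\G)$-cellularly acyclic if and only if $\underline{\Hom}(A,X)\simeq 0$. Applying this in our two cases, $X$ is $(\Gamma\1\otimes\G)$-acyclic if and only if $\underline{\Hom}(\Gamma\1,X)\simeq 0$, i.e., $\Lambda X\simeq 0$ by Proposition~\ref{prop-tate cohomology functors}(c); and $X$ is $(\K\otimes\G)$-acyclic if and only if $\underline{\Hom}(K,X)\simeq 0$ for every $K\in\K$.

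For the forward implication, suppose $\Lambda X\simeq 0$. Every Koszul object $K\in\K$ is torsion, so by smashingness of $\Gamma$ we have $K\simeq \Gamma\1\otimes K$. Hence
\[
\underline{\Hom}(K,X)\simeq\underline{\Hom}(\Gamma\1\otimes K,X)\simeq\underline{\Hom}(K,\underline{\Hom}(\Gamma\1,X))\simeq\underline{\Hom}(K,\Lambda X)\simeq 0,
\]
so $X$ is $(\K\otimes\G)$-cellularly acyclic.

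For the converse, suppose $\underline{\Hom}(K,X)\simeq 0$ for every $K\in\K$. Then for each $K\in\K$ and $G\in\G$,
\[
\underline{\Hom}(K\otimes G,X)\simeq\underline{\Hom}(G,\underline{\Hom}(K,X))\simeq 0.
\]
Now the full subcategory of $h\C$ consisting of objects $Y$ with $\underline{\Hom}(Y,X)\simeq 0$ is localizing: it is closed under shifts, triangles, retracts, and arbitrary sums (the last since $\underline{\Hom}$ takes coproducts in the first variable to products, and a product of zero objects is zero). This subcategory therefore contains $\K\otimes\G$ and hence $\mathrm{Loc}(\K\otimes\G)$. Since $\Gamma\1$ lies in $\mathrm{Loc}(\K\otimes\G)$ (the homotopy category of the cellularization at a set of compact cells equals the localizing subcategory they generate, as recalled from \cite{GreenleesShipley13}), we conclude $\underline{\Hom}(\Gamma\1,X)\simeq 0$, i.e., $X$ is $(\Gamma\1\otimes\G)$-cellularly acyclic.

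Finally, since the two cellularizations share the same cofibrations (namely those of $\C$) and, by the equivalence of acyclics just proven, the same classes of cellular equivalences, they coincide as model categories. The main care-point is the localizing-subcategory step in the converse direction; everything else is a direct manipulation of the adjunction together with the torsion property $K\simeq\Gamma\1\otimes K$.
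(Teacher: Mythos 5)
Your proof is correct and takes essentially the same approach as the paper: you use $\Lambda X \simeq \underline{\Hom}(\Gamma\1, X)$ together with $K \simeq \Gamma\1 \otimes K$ for one direction, and the localizing-subcategory argument (which the paper elides as ``clear'' using that $\Gamma\1$ is torsion) for the other. The only difference is cosmetic: you phrase things via internal homs where the paper uses homotopy classes of maps, and you spell out the localizing-subcategory step that the paper leaves implicit.
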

\begin{proof}
Since $\Gamma\1$ is torsion the reverse implication is clear. Conversely suppose that $[\Gamma\1 \otimes G, X] \simeq 0$ for all $G \in \G$. This means that $[G, \underline{\Hom}(\Gamma \1, X)]=0$ for all $G \in \G$ and so $\Hom(\Gamma \1, X) \simeq 0$ in $\C$. Then $[K \otimes G, \underline{\Hom}(\Gamma\1, X)] \simeq 0$ for each $K \in \K$, and since $\Gamma K \simeq K$ we conclude that 
$[K \otimes G, X]=0$ as required. The second claim then follows since the cellularly acyclic objects determine the weak equivalences in the cellularization. \end{proof}

\begin{thm}\label{thm:bifibrant}
A bifibrant object $(\beta\colon L\Lambda \1 \otimes_{L\1} V \to T)$ of $\cptorse$ is bifibrant in $\C_t$ if and only if the natural map 
$\Gamma T \to T$ is a weak equivalence in $\C$.
\end{thm}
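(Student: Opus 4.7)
The strategy is to combine the Quillen equivalence $\C \simeq_Q \C_t$ from Theorem~\ref{thm:torsionmodel} with a direct homotopy computation of the derived counit. Write $X = \mathsf{R} i \beta$; the defining pullback presents $X$ as fitting into a fibre sequence
\[ \Sigma^{-1} T \to X \to V \]
in $\C$. Applied to $X$, the left adjoint $\Gamma_e((-)_\mathrm{tors})$ produces an object whose $V$-component is $L\1 \otimes X$ and whose $T$-component is $\Sigma \Gamma\1 \otimes \Lambda\1 \otimes X \simeq \Sigma \Gamma X$, the last by the MGM equivalence of Proposition~\ref{prop-tate cohomology functors}(d).

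For the forward direction, suppose $\beta$ is bifibrant in $\C_t$. The Quillen equivalence forces the derived counit $\epsilon\colon \Gamma_e(X_\mathrm{tors}) \to \beta$ to be a weak equivalence in $\C_t$, and on a bifibrant target this coincides with a levelwise weak equivalence in $\cptorse$. Restricting to the $T$-component gives $\Sigma \Gamma X \xrightarrow{\sim} T$; applying $\Gamma$ and using the idempotence $\Gamma \Sigma \Gamma X \simeq \Sigma \Gamma X$ yields $\Gamma T \simeq T$.

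For the reverse direction, assume $\Gamma T \simeq T$. It suffices to check that $\epsilon$ is a levelwise weak equivalence in $\cptorse$: the source $\Gamma_e(X_\mathrm{tors})$ is $\G_\mathrm{tors}$-cellular since it lies in the image of the left adjoint from $\C$, and cellularity is preserved by weak equivalences between bifibrant objects, so $\beta$ would be cellular and hence bifibrant in $\C_t$. For the $V$-component, apply $L\1 \otimes -$ to the fibre sequence: since $L \Gamma \simeq 0$ (from the defining fibre of $\Gamma$) the hypothesis $\Gamma T \simeq T$ forces $LT \simeq 0$, whence $L\1 \otimes X \xrightarrow{\sim} L\1 \otimes V \simeq V$, the last step using that any $L\1$-module is $L$-local when $L$ is smashing. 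For the $T$-component, apply $\Gamma$ to the same fibre sequence: $\Gamma V \simeq 0$ because $V$ is $L$-local, so $\Sigma \Gamma X \xrightarrow{\sim} \Gamma T \simeq T$.

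The main technical obstacle is bookkeeping across the intermediate model categories: one must confirm that the abstract derived counit is indeed represented by $\Gamma_e(X_\mathrm{tors}) \to \beta$ after threading through both the equivalence $\C \simeq_Q \cell{\G_\mathrm{tors}}\cptors$ of Theorem~\ref{thm:pretorsion} and the further equivalence of Lemma~\ref{lem-extended-torsion-model}. This relies on $\beta$ being fibrant (so no fibrant replacement is needed when applying $\mathsf{R} i$) and on the smashingness of $L$ and $\Gamma$ to keep the homotopy of $X$ tractable via the fibre sequence above.
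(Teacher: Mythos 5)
Your approach is correct in spirit but genuinely different from the paper's. The paper never invokes the Quillen equivalence $\C \simeq_Q \C_t$ for this theorem; instead it works internally to $\cptorse$, replacing the cells $\G_{\mathrm{tors}}$ by the simpler set $\L = \{(0 \to \Lambda\1 \otimes K \otimes G),\ (L\Lambda\1 \otimes G \to 0)\}$ via the cofibre triangle $(0 \to \Sigma\Gamma\1\otimes\Lambda\1\otimes G) \to G_{\mathrm{tors}} \to (L\Lambda\1\otimes G \to 0)$ together with Lemma~\ref{cor:gamma and K}, then verifies the $\L$-cells are compact and reads off the cellular objects as $\mathrm{Loc}(\L)$, which are visibly those with $\Gamma T \simeq T$. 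You instead compute the derived counit $\Gamma_e((QX)_{\mathrm{tors}}) \to \beta$ coordinate-by-coordinate using the fibre sequence $\Sigma^{-1}T \to X \to V$. Your route is legitimate and arguably more conceptual, but it imports several technicalities that the paper's cell-swapping argument deliberately avoids: you must justify that the strict pullback computing $\mathsf{R}i\beta$ is a homotopy pullback (this rests on the \texttt{dimp} fibrancy of $\beta$, not just levelwise fibrancy), that tensoring with $L\1$, $L\Lambda\1$, and $\Sigma\Gamma\1\otimes\Lambda\1$ preserves the weak equivalence $QX \to X$ for the not-necessarily-cofibrant $X$, and that the $T$-component of the counit genuinely is the boundary map $\Sigma\Gamma X \to T$ coming from the rotated triangle $X \to V \to T$ — a diagram chase you wave at rather than carry out. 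A small terminological slip: $\Gamma_e$ is the \emph{right} adjoint of the inclusion $i\colon \cptorse \hookrightarrow \cptors$, so $\Gamma_e\circ(-)_{\mathrm{tors}}$ is not literally a single left Quillen functor; what saves you is that $(QX)_{\mathrm{tors}}$ already lies in the image of $i$, so $\Gamma_e$ acts trivially on it. None of these are fatal, but the paper's route gets the result with none of this bookkeeping, at the cost of the slightly ad hoc cell substitution. Your forward direction could also be shortened: once $\beta \in \mathrm{Loc}(\G_{\mathrm{tors}})$ (which is what cellular means), the condition $\Gamma T \simeq T$ holds because it holds on the generators and is preserved under the localizing-subcategory operations — no counit needed.
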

\begin{proof}
Let $\L=\L(\K,\G)$ be the set of objects of $\cptorse$ of the form 
\[
(0 \to \Lambda \1 \otimes K \otimes G)\quad \mathrm{and}\quad (L\Lambda \1 \otimes G \to 0)
\]
where $K$ ranges over $\K$ and $G$ ranges over the generators $\G$. 
Firstly, we show that $\C_t =\cell{\mathscr{\L}}\cptorse$ as model categories. 
Note that there is a triangle
\[
(0 \to \Sigma \Gamma \1 \otimes \Lambda \1 \otimes G) \to (L\Lambda\1 \otimes G \to \Sigma\Gamma \1 \otimes \Lambda \1 \otimes G) \to (L\Lambda\1 \otimes G \to 0)
\]
and combining this with Lemma~\ref{cor:gamma and K} shows that an
object of $\cptorse$ is $\G_\mathrm{tors}$-cellularly acyclic if and only
if it is $\L$-cellularly acyclic. Therefore the
$\G_\mathrm{tors}$-cellularization and the $\L$-cellularization give the
same model category.

We now claim that the objects in $\L$ are compact. Since $\Lambda \1 \otimes K \otimes G$ is compact as a $\Lambda 1$-module and sums in $\cptorse$ are computed levelwise, we immediately see that $(0 \to \Lambda \1 \otimes K \otimes G) \in \L$ is compact. To see that $(L\Lambda\1 \otimes G \to 0)$ is compact, we show that any map in $\cptorse$
\[
\begin{tikzcd}
L\Lambda\1 \otimes_{L\1}(L\1\otimes G) \arrow[d, "L\Lambda \1 \otimes \varphi"'] \arrow[r ]& 0 \arrow[d] \\
L \Lambda\1 \otimes_{L{\1}}( \bigoplus_{i} V_i)  \arrow[r] & \bigoplus_{i}T_i
\end{tikzcd}
\]
factors through a finite stage. This is clear since the image of $\varphi$ is nonzero only for finitely many indices $i$ as $L\1 \otimes G$ is a compact 
$L\1$-module.

Since the objects in $\L$ are compact, the bifibrant objects in $\cell{\L}\cptorse$ are exactly the objects of the localizing subcategory $\mathrm{Loc}(\L).$ From this it is clear that $(\beta\colon L\Lambda \1 \otimes_{L\1} V \to T) \in \cptorse$ is bifibrant in the cellularization if and only if $\Gamma T \to T$ is an equivalence.
\end{proof}

\begin{rem}\label{rem-hmtpy-cat-e-tors-model}  By Theorem \ref{thm:bifibrant}, an object in the homotopy category of the torsion model 
  $\C_t$ is represented by the following data:
  \begin{itemize}
  \item[(i)] a $L\1$-module $V$;
  \item[(ii)] a $\Lambda \1$-module $T$ which is torsion in the sense that the 
  natural map  $\Gamma T \to T$ is a weak equivalence;
  \item[(iii)] a $\Lambda\1$-module map 
  $\beta \colon L\Lambda\1 \otimes_{L\1} V \to T$.
  \end{itemize}

Since extension of scalars along $\1\to \Lambda \1$ does not affect
cellularization by Lemma~\ref{lem:torsion over complete ring}, we may assume $T$ is a $\Lambda \1$-module and can replace (ii) and (iii) 
  with 
  \begin{itemize}
  \item[(ii')] an object $T \in \C$ such that $\Gamma T \to T$ is a weak 
  equivalence;
  \item[(iii')] a map $\beta' \colon V \to T$ in $\C$.
  \end{itemize}

We call (i), (ii), (iii)  \emph{adelic data} and (i), (ii)', (iii)' 
\emph{Zariski data}.  The message of this paper is that one should use
the adelic data because they are closer to algebra than the Zariski
data, and we think of the adelic data as an enrichment of the Zariski
data. For example with rational $SO(2)$-spectra, in the Zariski formulation we are working with
modules over the $SO(2)$-equivariant rational sphere which has a very
complicated algebraic structure. However, in the adelic formulation we
work over $DE\F_+$, which is formal and $\pi_*^\bbT(DE\F_+)=\cO_\F$ is
a product of polynomial rings. See Section~\ref{sec:Tspectra} for more details.
\end{rem}

\begin{ex}
  Let us consider the torsion model for $\mod{\mathbb{Z}}$ with the
  family of closed points, i.e., the family of primes corresponding to the algebraic ideals $(p)$ for $p>0$ prime. Recall that a maximal 
  prime in $\mathrm{Spec}(\mathbb{Z})$ becomes minimal in the Balmer spectrum. The set $\{\mathbb{Z}/p \mid \text{$p$ prime}\}$ is a set of Koszul objects for this family of primes.
  Using Proposition~\ref{prop-splittinf tors and compl} and 
  Section~\ref{sec:derived commutative algebra} we calculate:
  \[
  L\1= \mathbb{Q} \qquad 
  \Gamma\1 =\bigoplus_{p \;\mathrm{prime}} \Sigma^{-1}\mathbb{Z}/p^\infty \qquad 
  \Lambda\1= \prod_{p \; \mathrm{prime}} \mathbb{Z}_p.
  \]
  Furthermore by Proposition~\ref{prop-splittinf tors and compl}, a complex $T \in \mod{\mathbb{Z}}$ satisfies 
  $\Gamma T \simeq T$ if and only if it decomposes as $T \simeq \bigoplus_p T_p$ where 
  $T_p$ is derived $p$-torsion.
  Therefore the data in the 
  Zariski torsion model for $\mod{\mathbb{Z}}$ consists of: 
  \begin{itemize}
  \item a rational chain complex $V \in \mod{\mathbb{Q}}$;
  \item a derived $p$-torsion complex $T_p$ for each prime 
  number $p$;
  \item a chain map $\beta \colon V \to \bigoplus_p T_p$. 
  \end{itemize}
  The adelic torsion model instead consists of:
  \begin{itemize}
  \item a rational chain complex $V \in \mod{\mathbb{Q}}$;
  \item a derived $p$-torsion complex $T_p$ for each prime 
  number $p$ but now seen as a module over $\Z_p$;
  \item a map 
  $(\prod_p \Z_p) \otimes V \to \bigoplus_p T_p$ of $(\prod_p \Z_p)$-modules.
  \end{itemize}
\end{ex}

\begin{rem}
Finally let us record the precise relationship between the standard model and the torsion model. Combining the results of Section~\ref{sec:adelic vs torsion} with Lemma~\ref{lem-extended-torsion-model}, one sees that there is a zig-zag of Quillen equivalences
\[
\begin{tikzcd}
\C_s \arrow[r, hookrightarrow, yshift=1mm] & \cell{\G_\mathrm{stan}}\cpstan \arrow[r, yshift=1mm, "\mathrm{hcof}"]  \arrow[l, yshift=-1mm, "\Gamma_e"] & \cell{\G_\mathrm{tors}}\cptors \arrow[r, yshift=-1mm, "\Gamma_e"'] \arrow[l, yshift=-1mm, "\mathrm{hfib}"] & \C_t \arrow[l, yshift=1mm, hookrightarrow] 
\end{tikzcd}
 \]
 between the standard model and torsion model for $\C$.
\end{rem}

\section{Cellular skeleton theorems}\label{sec:cellular skeleton thm}
The cellularization of the model structure present in the torsion model forces one of the objects to be derived torsion, see Theorem~\ref{thm:bifibrant}. If one is in a suitably algebraic situation, one can internalize this cellularization and prove a Quillen equivalence to a category of differential objects in an abelian category where this object is now torsion at the abelian level. We call such a Quillen equivalence a \emph{cellular skeleton theorem}, and in this section we prove such a result. We then apply this to simplify the torsion model for the derived category of a regular local ring of Krull dimension 1. In  Section~\ref{sec:Tspectra} the cellular skeleton theorem will provide the final step in proving the algebraic torsion model for rational $\mathbb{T}$-spectra.

\subsection{Torsion functors}
In this section we fix a commutative ring $R$ and a multiplicatively closed set $\cE$ of $R$.
\begin{defn}
The $\cE$-torsion functor $\Gamma_{\cE}\colon \mod{R} \to \mod{R}$ is defined by sending an $R$-module $N$ to the kernel of the map $N \to \cE^{-1}N$. If the natural map $\Gamma_{\cE}N \to N$ is an isomorphism, we say that $N$ is \emph{$\cE$-torsion.} We write $\mod{R}^{\text{$\cE$-tors}}$ for the full subcategory of $\mod{R}$ consisting of the $\cE$-torsion modules.
\end{defn}

\begin{rem}
The $\cE$-torsion functor is easily expressed in terms of the torsion functors for principal ideals $$\Gamma_{\cE} N=\mathrm{colim}_{a\in \cE}\Gamma_aN.$$
\end{rem}

  We will be working under the following additional assumption.

\begin{hyp}\label{hyp-torsion preserves injective}
   The torsion functor $\Gamma_{\cE} \colon \mod{R} \to \mod{R}$ preserves injective modules.
\end{hyp}

 If $R$ is Noetherian then this hypothesis holds as a consequence of the Baer criterion, see~\cite[2.1.4]{Brodmann-Sharp}.
One immediately sees that $\mod{R}^{\text{$\cE$-tors}}$ is an abelian 
 category with enough injectives. This hypothesis ensures that the injective 
 dimension of a torsion module is the same whether we think of it in
  the category of $\cE$-torsion modules or in the category of $R$-modules.

\subsection{The cellular skeleton theorem}
Suppose we have a diagram of commutative rings $S \to \cE^{-1}R \leftarrow R$ where $\cE$ is a multiplicative set in $R$. Furthermore, we assume that the $\cE$-torsion functor preserves injective modules as in Hypothesis~\ref{hyp-torsion preserves injective}.

Consider the sections of the diagram category 
\[\begin{tikzcd}
\mod{S} \arrow[d, "\bullet" description] & \\ \mod{\cE^{-1}R} \arrow[r]
& \mod{R} 
\end{tikzcd}\]
where the vertical is extension of scalars along the ring map $S \to \cE^{-1}R$ and the horizontal is restriction of scalars along the ring map $R \to \cE^{-1}R$.  We denote the sections of this category by $\widehat{\A}_t$.  An object of $\widehat{\A}_t$  is given by the data of an $R$-module map $\cE^{-1}R \otimes_S V \to N$ where $V$ is an $S$-module and $N$ is an $R$-module.

We are interested in situations when $\widehat{\A}_t$ is cellularized in such a way that the map $\Gamma_{\cE} N \to N$ is forced to be a quasi-isomorphism, so that in particular, the homology of $N$ is $\cE$-torsion. The goal of this section is to prove that one can strictify this at the abelian level by replacing $N$ with a $\cE$-torsion module. 

\begin{defn}
We define the category $\A_t$ to have objects $R$-module maps $\cE^{-1}R \otimes_S V \to T$ where $V$ is an $S$-module and $T$ is a $\cE$-torsion $R$-module. Morphisms in $\A_t$ are commutative squares
\[
\begin{tikzcd}
 \cE^{-1} R \otimes_S V \arrow[r] \arrow[d, "1 \otimes \varphi"'] & 
 T \arrow[d, "\psi"] \\
 \cE^{-1} R \otimes_S W \arrow[r] & U.
\end{tikzcd}
\]
The category $\A_t$ is abelian with kernels and cokernels defined levelwise.
\end{defn}

\begin{lem}\label{lem:abelian}
\leavevmode
\begin{itemize}
\item[(a)] The abelian category $\A_t$ has enough injectives.
\item[(b)] The category $d\A_t$ of differential objects in $\A_t$
  supports a \texttt{dimi} model structure in which the weak
  equivalences are the objectwise quasi-isomorphisms and the
  cofibrations are the objectwise monomorphisms. 
\item[(c)] Suppose that $S$ is a field. The injective dimension of $\A_t$ is equal to $\mathrm{injdim}(\mod{R}) + 1.$
\end{itemize}
\end{lem}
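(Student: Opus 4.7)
For part (a), I will build explicit injectives by producing right adjoints to the two coordinate projections from $\A_t$. The projection $\Pi_V\colon \A_t \to \mod{S}$ sending $(V, T, \beta) \mapsto V$ has right adjoint $\Pi_V^R(K) = (K, 0, 0)$, essentially immediate from the definition of morphisms since the compatibility condition is vacuous when the target has zero $T$-component. The projection $\Pi_T\colon \A_t \to \mod{R}^{\text{$\cT$-tors}}$ sending $(V, T, \beta) \mapsto T$ has right adjoint $\Pi_T^R(J) = (\Hom_R(\cE^{-1}R, J), J, \mathrm{eval})$; the key verification uses the tensor-Hom adjunction $\cE^{-1}R \otimes_S - \dashv \Hom_R(\cE^{-1}R, -)$ to show that the compatibility square forces the $V$-component of a morphism into $\Pi_T^R(J)$ to be the adjunct of the composite $\cE^{-1}R \otimes_S V \xrightarrow{\beta} T \to J$, so the morphism is determined by its $T$-component. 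Both projections being exact, both right adjoints preserve injectives. Given $(V, T, \beta) \in \A_t$, pick injective embeddings $V \hookrightarrow K$ in $\mod{S}$ and $T \hookrightarrow J$ in $\mod{R}^{\text{$\cT$-tors}}$ (the latter via Hypothesis~\ref{hyp-torsion preserves injective}); the induced map $(V, T, \beta) \to \Pi_V^R(K) \oplus \Pi_T^R(J)$ is componentwise a monomorphism, hence a monomorphism into an injective.

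For part (b), I will show $\A_t$ is a Grothendieck abelian category. It is abelian by assumption; colimits are computed componentwise so filtered colimits are exact (using that $\cT$ commutes with filtered colimits, so $\mod{R}^{\text{$\cT$-tors}}$ inherits exact filtered colimits from $\mod{R}$); a generator is obtained by combining $\Pi_V^R$ and $\Pi_T^R$ applied to generators of the two components. Standard results then endow the category $d\A_t$ of unbounded differential objects with the injective model structure whose weak equivalences are the quasi-isomorphisms and whose cofibrations are the monomorphisms. Since monomorphisms and short exact sequences in $\A_t$ are detected componentwise, this is precisely the announced \texttt{dimi} structure.

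For part (c), the upper bound uses the short exact sequence
\[ 0 \to (0, T, 0) \to (V, T, \beta) \to (V, 0, 0) \to 0. \]
When $S$ is a field every $S$-module is injective, so $(V, 0, 0) = \Pi_V^R(V)$ is injective in $\A_t$ and it suffices to bound the injective dimension of $(0, T, 0)$. Fix an injective resolution $0 \to T \to J^0 \to \cdots \to J^d \to 0$ of length $d := \mathrm{injdim}(\mod{R})$ in $\mod{R}^{\text{$\cT$-tors}}$; Hypothesis~\ref{hyp-torsion preserves injective} identifies the injectives of $\mod{R}^{\text{$\cT$-tors}}$ with the $\cT$-torsion injectives of $\mod{R}$, whence this length bound. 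I then build an injective resolution of $(0, T, 0)$ iteratively: at stage $k$ embed the current cokernel $X^k$ into $\Pi_V^R(V^k) \oplus \Pi_T^R(J^k)$, where $V^k$ is its $V$-component (injective since $S$ is a field). The $T$-components follow the chosen resolution of $T$ and hence die by stage $d + 1$, at which point the remaining cokernel has the form $(V^{d+1}, 0, 0)$, which is already injective. This yields a resolution of length at most $d + 1$.

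For the lower bound, compute $\Ext^{\ast}_{\A_t}((V, 0, 0), (0, T, 0))$ via the resolution just built. The identities $\Hom_{\A_t}((V, 0, 0), (K, 0, 0)) = \Hom_S(V, K)$ and $\Hom_{\A_t}((V, 0, 0), \Pi_T^R(J)) = 0$ (the second again by the tensor-Hom adjunction, since the compatibility forces the adjunct map to vanish) mean only the $\Pi_V^R$-summands contribute to the Hom-complex. Tracking the $V$-components, which at stage $k$ are $\Hom_R(\cE^{-1}R, J^{k-1})$, produces the dimension shift
\[ \Ext^{n+1}_{\A_t}((V, 0, 0), (0, T, 0)) \cong \Ext^n_R(\cE^{-1}R \otimes_S V, T). \]
The main obstacle is the final nonvanishing step: we must choose $T \in \mod{R}^{\text{$\cT$-tors}}$ and $V \in \mod{S}$ so that $\Ext^d_R(\cE^{-1}R \otimes_S V, T) \neq 0$, which requires that the top injective dimension of $\mod{R}$ is realized against some $\cE$-inverted module by a $\cT$-torsion target. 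Granting this realizability, we obtain $\mathrm{injdim}(\A_t) \geq d + 1$, matching the upper bound.
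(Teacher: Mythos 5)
Your parts (a) and (c) run essentially parallel to the paper's own proof. The functors you call $\Pi_V^R$ and $\Pi_T^R$ are precisely the paper's $e^t$ and $f^t$, the adjunction verifications are the same, and your embedding $(V,T,\beta)\hookrightarrow \Pi_V^R(K)\oplus\Pi_T^R(J)$ matches (and is in fact slightly more careful, as the paper writes $e^t(V)$ where one ought first to embed $V$ into an injective $S$-module). In (c) you first reduce to $(0,T,0)$ via the short exact sequence $0\to(0,T,0)\to(V,T,\beta)\to(V,0,0)\to 0$, whereas the paper writes down the resolution of the general object $X$ in one go, but the interleaved $e^t$--$f^t$ resolution is the same and the dimension shift $\Ext^{n+1}_{\A_t}\cong\Ext^n_R(\cEi R\otimes_S V,T)$ agrees with the paper's computation. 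Both you and the paper leave the final nonvanishing as an assertion (``which need not be zero'' in the paper, ``granting this realizability'' for you), so this is not a point against you.

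Part (b) is where you genuinely diverge. The paper simply invokes \cite[Appendix B]{Greenlees99}, which produces a model structure on differential objects in an abelian category directly from having enough injectives --- i.e.\ part (b) is obtained from part (a) with no further input. You instead try to show $\A_t$ is a Grothendieck abelian category and appeal to the general theory of injective model structures over Grothendieck categories. That route is viable in principle, and cocompleteness plus exactness of filtered colimits (computed levelwise, using that $\cT$ commutes with filtered colimits) is fine. But your construction of a generator is wrong: you propose combining $\Pi_V^R$ and $\Pi_T^R$ applied to generators, but these are \emph{right} adjoints of exact functors, hence preserve \emph{injectives}, not generators; an object of the form $\Pi_V^R(G_S)\oplus\Pi_T^R(G_R)$ will not in general admit a nonzero map to an arbitrary nonzero $(V,T,\beta)$ (for instance when the adjunct of $\beta$ is a monomorphism, $\Hom(\Pi_V^R(G_S),(V,T,\beta))=\Hom_S(G_S,\ker(\text{adj }\beta))$ vanishes). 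If you want to keep the Grothendieck route you should instead realize $\A_t$ as the full subcategory of the larger Grothendieck category $\widehat{\A}_t$ consisting of those sections whose $\mod{R}$-component is $\cT$-torsion; since this subcategory is closed under subobjects, quotients, extensions and coproducts, it is a hereditary torsion class and hence Grothendieck by the standard argument. As written, however, the generator step is a gap, and the paper's direct citation of the enough-injectives criterion is the cleaner path.
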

\begin{proof}
 For an $S$-module $V$ and an $\cE$-torsion $R$-module $T$, we put 
 \[
 e^t(V)=(\cE^{-1}R \otimes_S V \to 0) \quad \mathrm{and} \quad 
 f^t(T)= (\cE^{-1} R\otimes_S \Hom_R(\cE^{-1}R, T) \to T) 
 \]
 where the second map is the evaluation. A straightforward calculation shows that 
 $e^t$ and $f^t$ are right adjoint to the respective forgetful functors.  
 The forgetful functors are exact as (co)kernels in $\A_t$ are defined 
 levelwise, and therefore $e^t$ and $f^t$ preserve injectives. 
 Consider an object $X=(\cEi R \otimes_S V \to T)\in \A_t$ and 
 choose a monomorphism $T \to I$ where $I$ is an injective $\cE$-torsion 
 module. Then there is a monomorphism $X\to f^t(I)\oplus e^t(V)$ and so 
 part (a) follows. Part (b) then follows from~\cite[Appendix B]{Greenlees99}.
 
For part (c), we write $r$ for the injective 
 dimension of the category of $R$-modules. 
 Consider an object $X=(\cEi R \otimes_S V \to T)$ of $\A_t$ and choose an
 injective resolution 
 \[
 0 \to T \xrightarrow{\alpha_0} I_0 \xrightarrow{\alpha_1} I_1 \to \ldots \to 
 I_{r-1} \xrightarrow{\alpha_r} I_r \to 0
 \] 
 of $T$. We put $V_i = \Hom_R(\cEi R, I_i)$
 for all $i\geq 0$. Then one checks that that there is an exact 
 sequence
 \[
 0 \to X \to e^t(V) \oplus f^t(I_0) \to e^t(V_0)\oplus f^t(I_1) \to \ldots 
 \to e^t(V_{r-1}) \oplus f^t(I_r) \to e^t(V_r) \to 0
 \]
 which provides an injective resolution of $X$ in $\A_t$. This shows that the injective dimension 
 is less than or equal to $r+1$. Finally we note that the bound can be 
 achieved. For example, using the notation above
 \[
 \mathrm{Ext}_{\A_t}^{r+1}(e^t(S), (0 \to T)) = 
 \Hom_{\A_t}(e^t(S), e^t(V_r))=V_r= \mathrm{Ext}_R^r(\cEi R, T)
 \]
 which need not be zero. 
 \end{proof}

\begin{rem}
The category $\widehat{\A}_t$ supports both a \texttt{dimi} and \texttt{dimp} model structure, but the category $d\A_t$ supports only a \texttt{dimi} model structure since there are enough torsion injectives, but in general no torsion projectives. 
\end{rem}

We now construct an adjunction $(i, \Gamma_t)$ between the categories $\widehat{\A}_t$ and $d\A_t$ described above. There is an evident inclusion $i\colon d\A_t\to \widehat{\A}_t$. 
  We now claim that this has a right adjoint $\Gamma_t$ defined by sending $X=(\cE^{-1}R \otimes_S V \xrightarrow{\alpha} N)$ to the object 
  \[
  \Gamma_t(X)=(\cE^{-1}R \otimes_S V' \to (\cE^{-1}\ker(\alpha))/\ker(\alpha))
  \]
  as described by the following diagram 
  
  \[
  \begin{tikzcd}
  && \textcolor{red}{V'} \arrow[dr] \arrow[dl,red] & \\
  & \textcolor{red}{\cE^{-1} R \otimes_S V'} \ar[drrr,red] \arrow[dr] & & \textcolor{cyan}{V} \\
  \ker(\alpha) \arrow[rr] \arrow[dr,equal]& & \cE^{-1} \ker(\alpha) \arrow[rr] \arrow[uu, leftarrow, crossing over]  \arrow[dr]&&  \textcolor{red}{\cE^{-1}\ker(\alpha)/\ker(\alpha)} \arrow[dr]  \\
  &   \ker(\alpha) \arrow[rr]  && \textcolor{cyan}{\cE^{-1}R \otimes_S V} \arrow[rr, "\alpha"',cyan]  \ar[uu,crossing over, leftarrow, cyan]&& \textcolor{cyan}{N}
  \end{tikzcd}
  \]
  We move from the blue diagram to the red diagram by first taking $\ker(\alpha)$.  
  We note that the map $\ker(\alpha) \to \cE^{-1}R \otimes_S V$ must factor through $\cE^{-1}\ker(\alpha)$ since $\cE$ is invertible in $\cE^{-1}R \otimes_S V$. The cokernel of the map $\ker(\alpha)\to \cE^{-1}\ker(\alpha)$ gives the $\cE$-torsion object replacing $N$. We then construct the object $V'$ by taking the pullback of 
  $\cE^{-1} \ker(\alpha) \to \cE^{-1} R\otimes_S V \leftarrow V$ and note that we have a map from the extension of $V'$ via composition. 
  
  If $N$ is already $\cE$-torsion, then $\Gamma_t(X)=X$. Therefore the 
  unit of the adjunction $X \to \Gamma_t i X$ is given by the identity, whereas the 
  counit $i \Gamma_t Y \to Y$ is the map depicted in the previous diagram. 
  As such $(i, \Gamma_t)$ forms an adjoint pair.
  
\begin{lem}\label{lem:cells}
Let $\L = \{(\cE^{-1}R \to 0), (0 \to K)_{K \in \K}\}$ where $\K$ is a set of compact generators for the derived category of $\cE$-torsion $R$-modules. A bifibrant object $(\alpha\colon \cE^{-1}R \otimes_S V \to N) \in \widehat{\A}_t$ is bifibrant in the $\L$-cellularization if and only if the canonical $R$-module map $\Gamma_{\cE} N \to N$ is a quasi-isomorphism.
\end{lem}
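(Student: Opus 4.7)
The strategy mirrors the proof of Theorem~\ref{thm:bifibrant}. First I would verify that every object of $\L$ is compact in the homotopy category of $\widehat{\A}_t$. For $(0 \to K)$ with $K \in \K$, compactness is immediate: sums in $\widehat{\A}_t$ are computed coordinatewise, and $K$ is compact in the derived category of $\cT$-torsion $R$-modules by hypothesis on $\K$. For $(\cE^{-1}R \to 0)$ I would argue exactly as in the proof of Theorem~\ref{thm:bifibrant}: a morphism from this object into a coproduct $\bigoplus_i (\cE^{-1}R \otimes_S V_i \to N_i)$ is determined by its first coordinate $\cE^{-1}R \to \bigoplus_i \cE^{-1}R \otimes_S V_i$, which must factor through a finite sub-sum since $\cE^{-1}R$ is compact as an $\cE^{-1}R$-module.

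Once compactness is established, the standard identification recalled at the start of Section~\ref{sec:extended torsion model} (the bifibrant objects of an $\L$-cellularization at a set of compact objects coincide with $\mathrm{Loc}(\L)$) reduces the lemma to showing that $(\alpha \colon \cE^{-1}R \otimes_S V \to N)$ lies in $\mathrm{Loc}(\L)$ if and only if $\cT N \to N$ is a quasi-isomorphism.

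For the forward direction, the class of objects of $\widehat{\A}_t$ whose $N$-coordinate satisfies $\cT N \xrightarrow{\sim} N$ is a localizing subcategory, because $\cT$ commutes with sums and the condition is preserved under exact triangles. This subcategory visibly contains both types of generators in $\L$: the condition holds trivially for $(\cE^{-1}R \to 0)$, and holds for $(0 \to K)$ since $K \in \K$ is by construction a $\cT$-torsion module. For the converse, given such an object I would form the cofibre sequence
\[
(0 \to N) \to (\alpha\colon \cE^{-1}R \otimes_S V \to N) \to (\cE^{-1}R \otimes_S V \to 0)
\]
in $\widehat{\A}_t$ and show that each outer term lies in $\mathrm{Loc}(\L)$. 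The first lies in $\mathrm{Loc}((0 \to K)_{K \in \K})$ since $N \simeq \cT N$ lies in the derived category of $\cT$-torsion $R$-modules, which by choice of $\K$ is the localizing subcategory it generates. The third lies in $\mathrm{Loc}((\cE^{-1}R \to 0))$ once $V$ is cofibrantly replaced as an $S$-module, since such a cofibrant $V$ is built from free $S$-modules by cofibrations, so $\cE^{-1}R \otimes_S V$ is built from copies of $\cE^{-1}R$ by cofibrations in $\mod{\cE^{-1}R}$.

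The principal technical point is making this last step rigorous: we must know that extension of scalars along $S \to \cE^{-1}R$ is left Quillen, which is Lemma~\ref{lem:restrictionleftQuillen}, so that cofibrant replacement of $V$ gives a genuine quasi-isomorphism after applying $\cE^{-1}R \otimes_S -$. With this in hand, the localizing subcategory generated by $(\cE^{-1}R \to 0)$ absorbs $(\cE^{-1}R \otimes_S V \to 0)$, completing the argument.
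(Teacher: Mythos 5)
Your overall strategy matches the paper's: establish compactness of the two cells and identify the cellular objects with the localizing subcategory $\mathrm{Loc}(\L)$, then run a localizing-subcategory argument in both directions. Your detailed treatment of the "standard localizing subcategory argument" — via the cofibre sequence $(0 \to N) \to (\cE^{-1}R \otimes_S V \to N) \to (\cE^{-1}R \otimes_S V \to 0)$ and the observation that cofibrant $V$ builds $\cE^{-1}R \otimes_S V$ from copies of $\cE^{-1}R$ — is a sound expansion of what the paper leaves to the reader.

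The one slip is in your compactness argument for the cell $(\cE^{-1}R \to 0)$. You say a morphism into $\bigoplus_i(\cE^{-1}R \otimes_S V_i \to N_i)$ is determined by a map $\cE^{-1}R \to \bigoplus_i \cE^{-1}R \otimes_S V_i$ and invoke compactness of $\cE^{-1}R$ over $\cE^{-1}R$. But a morphism in $\widehat{\A}_t$ is given by an $S$-module map $\varphi\colon S \to \bigoplus_i V_i$ in the top-left vertex (together with the forced zero map into $\bigoplus_i N_i$ and the compatibility constraint); the $\cE^{-1}R$-module component you wrote down is $\cE^{-1}R \otimes_S \varphi$, which does not determine $\varphi$ since extension of scalars along $S \to \cE^{-1}R$ need not be faithful. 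So your reasoning shows that the $\cE^{-1}R$-component factors through a finite sub-sum but does not immediately yield a factorization of the morphism itself. The correct statement — and the analogy to the proof of Theorem~\ref{thm:bifibrant}, which uses compactness of $L\1 \otimes G$ over $L\1$, i.e., the top-left vertex — is that the $S$-module component $\varphi\colon S \to \bigoplus_i V_i$ factors through a finite sub-sum because $S$ is compact as an $S$-module, and this forces the whole morphism to factor. This is exactly what the paper does. The fix is cosmetic but the distinction is worth keeping straight, since the two module categories genuinely differ.
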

\begin{proof}
We argue that the cells $(\cE^{-1}R \to 0)$ and $(0 \to K)$ 
are compact in $\widehat{\A}_t$ so we can identify the homotopy category of the cellularization with the localizing subcategory generated by the cells. The desired result then follows by a standard localizing subcategory argument.

It is clear by assumption that the cell 
$(0 \to K)$ is compact in $\widehat{\A}_t$, so it remains to show that $(\cE^{-1}R \to 0)$ is compact. Consider a map from $(\cE^{-1}R \to 0)$ into a direct 
sum $\bigoplus_i (\cE^{-1}R \otimes_S V_i \xrightarrow{\alpha_i} N_i)$. Note that any such map is uniquely determined by a map of $S$-modules $S \to \bigoplus_i \ker(\alpha)$ which factors through a finite stage since $S$ is a compact $S$-module. It follows that the cell $(\cE^{-1}R \to 0)$ is compact and this concludes the proof.
\end{proof}

We now state and prove the cellular skeleton theorem.

\begin{thm}\label{thm:cellularskeleton}
	Let $\L = \{(\cE^{-1}R \to 0), (0 \to K)_{K \in \K}\}$ where $\K$ is a set of compact generators for the derived category of $\cE$-torsion $R$-modules. The adjunctions 
	\[\begin{tikzcd}
	\cell{\L}\widehat{\A}_t^{\mathtt{dimp}} \arrow[r, yshift=1mm, "1"] & \cell{\L}\widehat{\A}_t^{\mathtt{dimi}}\arrow[l, yshift=-1mm, "1"] \arrow[r, yshift=-1mm, "\Gamma_t"'] & d\A_t^{\mathtt{dimi}} \arrow[l, yshift=1mm, "i"']
	\end{tikzcd} \]
	 are both Quillen equivalences.
\end{thm}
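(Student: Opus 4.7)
The plan is to establish both Quillen equivalences by applications of the Cellularization Principle of~\cite{GreenleesShipley13}. For the first equivalence, I would first check that the identity $1\colon \widehat{\A}_t^{\mathtt{dimp}} \to \widehat{\A}_t^{\mathtt{dimi}}$ is left Quillen: projective cofibrations of dg-$R$-modules are split monomorphisms with projective cokernel (hence monomorphisms, i.e.\ cofibrations in the injective model structure), and the weak equivalences in both model structures are objectwise quasi-isomorphisms. In fact this makes the uncellularized identity already a Quillen equivalence. Since the cells $\L$ are the same on both sides, the Cellularization Principle applied to the identity adjunction—whose derived unit is trivially an isomorphism on $\L$—descends this to a Quillen equivalence $\cell{\L}\widehat{\A}_t^{\mathtt{dimp}} \simeq_Q \cell{\L}\widehat{\A}_t^{\mathtt{dimi}}$.

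For the second equivalence I would first verify that $i\colon d\A_t \to \widehat{\A}_t$ is left Quillen in dimi, which is immediate since $i$ preserves objectwise monomorphisms and objectwise quasi-isomorphisms. Observing that $i(\L) = \L$, I would then apply the Cellularization Principle to $(i,\Gamma_t)$ at the set of cells $\L \subset d\A_t^{\mathtt{dimi}}$ to conclude a Quillen equivalence $\cell{\L}d\A_t^{\mathtt{dimi}} \simeq_Q \cell{\L}\widehat{\A}_t^{\mathtt{dimi}}$. Two conditions need checking: compactness of each cell on both sides, and that the derived unit $\ell \to \Gamma_t i(\ell)$ is a weak equivalence for each $\ell \in \L$. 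Compactness on the $\widehat{\A}_t$ side is Lemma~\ref{lem:cells}, and the same argument applies on the $d\A_t$ side. For the derived unit, I would compute directly from the construction of $\Gamma_t$: taking $\ell = (\cE^{-1}R \to 0)$, the fact that $N = \cT N = 0$ forces $W' = \cE^{-1}R \otimes_S S = \cE^{-1}R$, the second pullback then yields $V' = S$, and so $\Gamma_t i(\ell) = (\cE^{-1}R \to 0) = \ell$; taking $\ell = (0 \to K)$ with $K$ torsion, $\cT K = K$ forces $W' = 0$ and $V' = 0$, so again $\Gamma_t i(\ell) = \ell$. In both cases the derived unit is the identity.

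To finish the second equivalence it remains to show that $\cell{\L}d\A_t^{\mathtt{dimi}} \simeq_Q d\A_t^{\mathtt{dimi}}$, equivalently that $\mathrm{Loc}(\L) = hd\A_t$. Given any $X = (\cE^{-1}R \otimes_S V \to T) \in d\A_t$, the short exact sequence
\[0 \to (0 \to T) \to X \to (\cE^{-1}R \otimes_S V \to 0) \to 0\]
yields a triangle in $hd\A_t$ whose outer terms lie in the localizing subcategories generated by $\{(0 \to K)\}_{K \in \K}$ and $(\cE^{-1}R \to 0)$ respectively. This uses that the functors $T \mapsto (0 \to T)$ and $e^t\colon V \mapsto (\cE^{-1}R \otimes_S V \to 0)$ preserve colimits (the latter being a left adjoint to objectwise evaluation), together with the hypothesis that $\K$ generates the derived category of $\cT$-torsion $R$-modules and the fact that $S$ generates $\mod{S}$.

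The main obstacle I anticipate lies less in any single step than in keeping the bookkeeping of model structures and cellularizations consistent across both adjunctions: one must be sure that compactness of cells is preserved by $i$ (so that the same set $\L$ really is a set of compact cells in $\widehat{\A}_t$ and in $d\A_t$), and one must track that the Cellularization Principle requires the cells in the source to be sent to compact objects in the target before the derived unit condition can be invoked. The derived unit computation itself is elementary, and is exactly the payoff of having chosen cells $\L$ in which one of the two slots vanishes.
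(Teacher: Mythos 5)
Your proof is correct, but your treatment of the second adjunction takes a genuinely different route from the paper's. The paper checks that $(i,\Gamma_t)$ is Quillen directly between $d\A_t^{\mathtt{dimi}}$ and $\cell{\L}\widehat{\A}_t^{\mathtt{dimi}}$ by invoking the dual of Dugger's lemma~\cite[A.2]{Dugger01} for right Bousfield localizations (the condition being that $i$ sends objects of $d\A_t$ to $\L$-cellular objects, which is supplied by Lemma~\ref{lem:cells}); it then verifies the unit--counit conditions using the identification of cellular objects from Lemma~\ref{lem:cells}. You instead route through $\cell{\L}d\A_t^{\mathtt{dimi}}$: first applying the Cellularization Principle to the uncellularized Quillen adjunction $(i,\Gamma_t)$ at the cells $\L$ (which sit in both categories), and then showing that the cellularization on the $d\A_t$ side is vacuous because $\L$ is a set of compact generators of $hd\A_t$ --- which you establish via the short exact sequence $0\to(0\to T)\to X\to(\cE^{-1}R\otimes_S V\to 0)\to 0$, the colimit-preserving functors $T\mapsto(0\to T)$ and $e^t$, and the hypotheses on $\K$ and $S$. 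This second step is the genuine extra work in your approach; in exchange you avoid Dugger's comparison lemma entirely and only ever invoke the Cellularization Principle, which has already been used repeatedly in the paper, so your argument is arguably more self-contained. One point to tighten: when $\mathrm{Loc}(\L)=hd\A_t^{\mathtt{dimi}}$ and $\L$ consists of compact objects, the $\L$-cellular equivalences in $d\A_t^{\mathtt{dimi}}$ coincide with the weak equivalences, and since cellularization preserves the fibrations, $\cell{\L}d\A_t^{\mathtt{dimi}}$ is literally the same model category as $d\A_t^{\mathtt{dimi}}$; making this explicit shows that the Quillen equivalence you produce really is the adjunction $(i,\Gamma_t)$ as stated, not just a zig-zag.
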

\begin{proof}
	For the first adjunction, without the cellularization the result is clear since the identity is a left Quillen equivalence from the projective to the injective model structure on modules. The cellularization preserves the Quillen equivalence.
	
	For the second adjunction, we firstly verify that it is a Quillen adjunction. Note that $$\Gamma_t: \widehat{\A}_t \leftrightarrows d\A_t : i$$ is a Quillen adjunction when both sides are equipped with the \texttt{dimi} model structure. Therefore, by the dual result to Dugger~\cite[A.2]{Dugger01}, to check that $$\Gamma_t: \cell{\L}\widehat{\A}_t^\texttt{dimi}\leftrightarrows d\A_t^\texttt{dimi} : i$$ is a Quillen adjunction it is sufficient to check that $i$ sends objects of $d\A_t$ to cellular objects. This is clear by Lemma~\ref{lem:cells}. To see that this Quillen adjunction is moreover a Quillen equivalence, firstly note that the unit is always an equivalence. The counit is an equivalence on objects of the form identified in Lemma~\ref{lem:cells} so the result follows.
\end{proof}

\subsection{Application to regular local rings of dimension 1}
We now apply the cellular skeleton theorem to simplify the torsion model for the derived category of a regular local ring of Krull dimension 1. Suppose that $R$ is a regular local ring which has Krull dimension 1 (i.e., $R$ is a discrete valuation ring), and write $\m$ for its maximal ideal. Take the family of prime ideals which consists of the maximal algebraic prime $\m$ (i.e., the minimal Balmer prime). Since $R$ is regular and has Krull dimension 1, the maximal ideal $\m$ is generated by a single element, say $x$. We can now identify the functors at play in the torsion model as $L\1 = R[x^{-1}]$, $\Gamma\1 = \Sigma^{-1}R/x^{\infty}$ and $\Lambda\1 = R_\m^\wedge$, see Section~\ref{sec:derived commutative algebra} and Remark~\ref{rem:regularlocal} for more details. Therefore the torsion model (Theorem~\ref{thm:torsionmodel}) gives a Quillen 
   equivalence
  \[
  \mod{R} \simeq _Q \mathrm{cell}\text{-} 
  \left(
  \begin{tikzcd}
   \mod{R[x^{-1}]} \arrow[d, "\bullet" description] & \\
   \mod{R_\m^\wedge[x^{-1}]} \arrow[r]
                               & \mod{R_\m^{\wedge}} 
  \end{tikzcd}
  \right)
  \]
  where the cellularization can be taken at the set of cells $\{(0 \to R/x),(R^\wedge_\m[x^{-1}] \to 0)\}$
 %\[\left\{\begin{tikzcd} 0 \arrow[d, dashed, "\bullet" description] & \\ 0 \arrow[r] & R/x \end{tikzcd}, \begin{tikzcd} R[x^{-1}] \arrow[d, dashed, "\bullet" description] & \\ R_\m^\wedge[x^{-1}] \arrow[r] & 0 \end{tikzcd}\right\}\] 
 by Theorem~\ref{thm:bifibrant}. 
  
Define $\A_t(R)$ to be the category whose objects are $R_\m^\wedge$-module maps $R_\m^\wedge[x^{-1}] \otimes_{R[x^{-1}]} V \to T$ where $V$ is a $R[x^{-1}]$-module and $T$ is an $\m$-power torsion module. By Lemma~\ref{lem:abelian} this is an abelian category with enough injectives. The object $R/x$ is a compact generator for the derived category of $\m$-power torsion modules, and therefore applying the cellular skeleton theorem (Theorem~\ref{thm:cellularskeleton}) shows that we have a zig-zag of Quillen equivalences
 \[\mod{R} \simeq_Q d\A_t(R).\] 
 
 \begin{rem}\label{rem:regularlocal}
We restrict to the case of regular local rings of dimension 1 so that
$L\1$ is a ring, rather than a DGA. In general, for a local ring
$(R,\m)$ the localization $L\1 =
\check{C}_\m(R)$ can be additively obtained from $K_\infty(\m)$ by deleting $R$ from degree 0 and regrading. Therefore, if $\m=(x)$ is principal then $K_{\infty}(x)=(R\to R[x^{-1}])$ and $\check{C}_\m(R)=R[x^{-1}]$.
 \end{rem}

\section{The torsion model for rational $\bbT$-spectra}\label{sec:Tspectra}
  We now apply the theory developed in the previous sections to 
  establish an algebraic torsion model for the category of rational 
  $\bbT$-spectra: Theorem~\ref{thm-torsion model for T-spectra} promotes 
  the equivalence of \cite{Greenlees99} to a Quillen 
  equivalence.

  In this part of the paper we will be working rationally, i.e., all spectra 
  are rationalized without further comment, and all homology and cohomology 
  theories will be unreduced with rational coefficients. 
  \subsection{Preliminaries}
  Unless stated otherwise, we write $\bbT$ for the circle group and 
  $\Sp_{\mathbb{T}}$ for the category of orthogonal $\mathbb{T}$-spectra 
  with the rational $\mathbb{T}$-stable model structure. 
The Balmer spectrum was discussed in Section~\ref{sec:rational equivariant spectra}.

   We let $z$ denote the natural representation of $\bbT$, so that
   $z^n$ is a one dimensional representation with kernel $F$ cyclic of
   order $n$. The cohomology ring $H^*(B\mathbb{T}/F)=\Q [c_F]$ is
   polynomial on the Euler class $c_F$ of $z^n$.
  
\begin{defn}\leavevmode
  \begin{itemize} 
   \item  The family of finite subgroups of $\bbT$ will be denoted $\F$. 
  \item We denote by $\cO_{\F}$ the coefficient ring
  \[
  \cO_{\F}= \prod_{F \in \F} H^*(B\mathbb{T}/F).
  \]
  \item For a $\mathbb{T}$-representation $V$, the Euler class 
  $e(V) \in \cO_{\F}$ is the element with $F$-component 
  $ e(V)_F= e(V^{F}) \in H^{|V^{F}|}(B\mathbb{T}/F)$,
  and we write $\cE_\mathbb{T} = \{ e(V) \mid V^\mathbb{T}=0 \}$. 
  \item We define the $\F$-Tate ring by $t_{\F}= \cE_\mathbb{T}^{-1}\cO_{\F}$.   
  \end{itemize}
\end{defn}

  Recall from Section~\ref{sec:rational equivariant spectra} that 
  we have functors $L_{\F^c}, \Gamma_{\F}$ and $\Lambda_\F$ 
  associated to the family $\F$. Unravelling the definitions we see that 
  the diagram $T_\F^{\lrcorner}$ from Definition~\ref{defn:T-diagram}
  corresponds to the Tate diagram
 \[
 \begin{tikzcd}
                            & \widetilde{E}\F \arrow[d] \\
 DE\F_+ \arrow[r] & DE\F_+ \wedge \widetilde{E}\F.
 \end{tikzcd}
 \]
 
\begin{prop}\label{prop-formality}
By applying the $\mathbb{T}$-homotopy groups functor to the diagram $T_{\F}^{\lrcorner}$ above we obtain the following diagram of commutative rings
\[
 T^a_{\F}=
 \begin{tikzcd}
                            & \mathbb{Q} \arrow[d] \\
\cO_\F \arrow[r ] & t_\F
 \end{tikzcd}
 \]
 with the obvious maps. 
 Furthermore, the diagram $T^a_{\F}$ is intrinsically formal in the sense that 
 any cospan of commutative DGAs $A \to B \leftarrow C$ with homology $T^a_\F$
is quasi-isomorphic to $T^a_{\F}$.
\end{prop}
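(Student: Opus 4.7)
The plan splits along the two assertions of the proposition.

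For the identification of $T^a_\F$, I apply $\pi^\bbT_*$ to each vertex of the Tate cospan $T_\F^{\lrcorner}$ and invoke standard rational $\bbT$-equivariant computations: $\widetilde{E}\F$ has geometric isotropy $\{\bbT\}$, so $\pi^\bbT_*\widetilde{E}\F = \mathbb{Q}$ concentrated in degree zero; the function spectrum $DE\F_+$ has coefficient ring $\prod_{F\in\F} H^*(B\bbT/F) = \cO_\F$; and smashing with $\widetilde{E}\F$ has the effect of inverting the Euler classes in $\cE_\bbT$, yielding $t_\F$. The maps are then identified as the unit $\mathbb{Q} \to t_\F$ and the canonical Euler localization $\cO_\F \to t_\F$.

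For intrinsic formality of the cospan, I proceed in two stages. First, I establish intrinsic formality of each vertex as a commutative $\mathbb{Q}$-DGA. The case of $\mathbb{Q}$ is trivial; for $\cO_\F$ and $t_\F$, both are concentrated in even degrees with polynomial-like structure, and over a field of characteristic zero such CDGAs are intrinsically formal. Concretely, for any commutative DGA $C$ with $H_*(C) \cong \cO_\F$, lifting the degree-$2$ generators $c_F$ to cycles in $C$ produces a quasi-isomorphism of commutative DGAs $\cO_\F \to C$; the same argument works for $t_\F$, or one can instead exploit that $t_\F$ is a flat localization of $\cO_\F$.

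Second, I rectify the maps in an arbitrary cospan $A \to B \leftarrow C$ with homology $T^a_\F$. By the first stage, there exist zig-zag quasi-isomorphisms $A \simeq \mathbb{Q}$, $B \simeq t_\F$, $C \simeq \cO_\F$ in commutative DGAs. To upgrade these to a quasi-isomorphism of cospans, I must show the induced maps agree up to homotopy with the canonical ones. The map $\mathbb{Q} \to t_\F$ is determined in the homotopy category of commutative $\mathbb{Q}$-DGAs since $\mathbb{Q}$ is initial. For the map $\cO_\F \to t_\F$, any CDGA map whose effect on homology inverts the Euler classes factors through the derived Euler localization; because $\cE_\bbT$ acts regularly in $\cO_\F$, this derived localization coincides with the classical one $t_\F$, forcing the map to be quasi-isomorphic to the canonical localization.

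The step I expect to be the main obstacle is the coherence in the second stage, namely upgrading pointwise formality to diagram formality. The key technical input is the observation that the Euler localization $\cO_\F \to t_\F$ represents a unique homotopy class of CDGA maps into any $B$ with $H_*B = t_\F$, which reduces to the flatness of this localization. From an alternative perspective, this is an André--Quillen obstruction calculation, and all relevant obstruction groups vanish because everything sits in even degrees over $\mathbb{Q}$.
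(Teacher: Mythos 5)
Your identification of the homotopy rings matches what the paper does (the paper simply cites \cite[2.2.3, 2.4.1, \S 5.2]{Greenlees99} for these calculations). Where you diverge is in the formality statement: the paper's proof is a bare citation to \cite[9.4, \S 9.D]{GreenleesShipley18}, while you attempt to prove formality directly. The direct route is a legitimate alternative, but as written it has a genuine gap at the first vertex.

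The gap is your treatment of $\cO_\F$. You claim that ``lifting the degree-$2$ generators $c_F$ to cycles in $C$ produces a quasi-isomorphism of commutative DGAs $\cO_\F \to C$.'' That argument is valid for a \emph{free} graded-commutative ring, but $\cO_\F = \prod_{F \in \F} H^*(B\bbT/F)$ is a product of polynomial rings, not a polynomial ring: it has nontrivial degree-$0$ part $\prod_F \Q$ and relations $e_F e_{F'} = 0$, $e_F c_{F'} = 0$ for $F \neq F'$, $c_F c_{F'}=0$. Picking cycles representing the $c_F$ does not produce a ring map out of $\cO_\F$; one must also lift the idempotents $e_F$ to genuine orthogonal idempotent cycles in $C_0$ compatible with the $c_F$, and idempotent lifting to $Z_0(C)$ along $Z_0(C)\to H_0(C)$ is not automatic (the kernel consists of boundaries, which need not be nil). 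The general slogan ``even homology over $\Q$ implies formal'' also does not directly apply, since the Sullivan/minimal-model machinery assumes a connected (degree-$0$ part $=\Q$) CDGA, which $\cO_\F$ is not. Handling the infinite product of idempotents correctly is precisely the technical content that \cite[\S 9.D]{GreenleesShipley18} supplies.

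Your second stage (rectifying the maps once the vertices are formal) is a reasonable sketch, and you correctly identify it as the harder coherence point. Appealing to $\Q$ being initial handles $\Q\to t_\F$, and arguing that $\cO_\F \to t_\F$ is a flat Ore localization so the induced map is essentially unique is a sensible strategy, but it is only outlined; an actual obstruction-theoretic argument (or a reduction to the fact that restriction along a smashing localization of rings is fully faithful on mapping spaces) would be needed to make it complete. If you intend to avoid the citation, you would essentially be reproving \cite[9.4, \S 9.D]{GreenleesShipley18}; otherwise, it is cleaner to cite it as the paper does.
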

\begin{proof}
 The $\mathbb{T}$-homotopy groups of $DE\F_+$ can be calculated 
 using~\cite[2.2.3, 2.4.1]{Greenlees99}. It follows from~\cite[\S 5.2]{Greenlees99} that $\pi_*^{\mathbb{T}}(DE\F_+ \wedge \widetilde{E}\F)=t_\F$. Finally formality follows from~\cite[9.4, \S 9.D]{GreenleesShipley18}.
\end{proof}

\subsection{Euler torsion modules} 
  We would like to apply the theory of Section~\ref{sec:cellular skeleton thm} 
  to the diagram of rings $\cO_\F\to t_\F \leftarrow \mathbb{Q}$ equipped with 
  the torsion functor associated to the set $\cE_\bbT$ of Euler 
  classes. The Euler torsion functor is the composite of two functors which 
  both preserve injective objects by ~\cite[17.3.3, 17.3.5]{Greenlees99}. 
  Therefore by the results in Section~\ref{sec:cellular skeleton thm} there 
  exists an abelian category $\A_t(\mathbb{T})$ of injective dimension 2, 
  whose objects  have the form 
  $(t_\F \otimes V\to T)$ where $T$ is an Euler torsion $\cO_\F$-module and 
  $V$ is a $\mathbb{Q}$-module. In particular, the category 
  $d\A_t(\mathbb{T})$ of differential objects in $\A_t(\mathbb{T})$ 
  supports a $\texttt{dimi}$ model structure.
  
  In order to apply the cellular skeleton theorem we need to understand 
  the compact generators in the derived category of Euler torsion modules.   
  Let $e_F \in \cO_{\F}$ be the idempotent given by projection 
  onto the $F$-factor. Then we have the following useful criterion.

\begin{lem}[{\cite[4.6.6]{Greenlees99}}]\label{lem-euler torsion}
  The following conditions on a $\cO_{\F}$-module $M$ are equivalent:
  \begin{itemize}
  \item[(a)] $M$ is Euler torsion;
  \item[(b)] $M= \bigoplus_{F \in \F} e_F M$ and for each $m \in M$ there exist 
  $N>0$ such that $c^N m=0$, where $c$ is the total Chern class.
  \end{itemize}
\end{lem}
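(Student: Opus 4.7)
The plan is to unpack the structure of the Euler class $e(V) \in \cO_{\F}$ for a representation $V$ with $V^{\bbT}=0$, and then use this to prove each direction by examining components. The key preliminary observation is that such a $V$ splits as $\bigoplus_i z^{n_i}$ with each $n_i \neq 0$, so for any $F \in \F$ we have $V^F \neq 0$ if and only if $|F|$ divides some $n_i$. Since only finitely many $n_i$ occur, the set $\mathrm{Supp}(V) := \{F \in \F : V^F \neq 0\}$ is finite. Moreover by definition of the Euler class component, if $F \notin \mathrm{Supp}(V)$ then $e(V)_F = 1 \in H^0(B\bbT/F) = \mathbb{Q}$, while if $F \in \mathrm{Supp}(V)$ then $e(V)_F$ is a nonzero rational multiple of a strictly positive power of $c_F$.

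For the direction (a) $\Rightarrow$ (b), suppose $m \in M$ is annihilated by $e(V)$ for some $V$ with $V^{\bbT}=0$. Writing $m = (m_F)_F$, the equation $e(V) \cdot m = 0$ holds componentwise, so for $F \notin \mathrm{Supp}(V)$ we get $m_F = e(V)_F \cdot m_F = 0$. Hence $m$ has nonzero components only at the finitely many $F \in \mathrm{Supp}(V)$, which is exactly the statement that $m \in \bigoplus_F e_F M$. For each such $F$, the equation $e(V)_F \cdot m_F = 0$ shows that some power $c_F^{N_F}$ annihilates $m_F$; taking $N$ to be the maximum of the $N_F$ over the finite support of $m$ gives $c^N \cdot m = 0$.

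For (b) $\Rightarrow$ (a), suppose $m = \sum_{i=1}^k e_{F_i} m$ with $c^N m = 0$. I would set $n = \mathrm{lcm}(|F_1|, \ldots, |F_k|)$ and take $V = (z^n)^{\oplus N}$. Then $V^{\bbT} = 0$ since $n \neq 0$, and by construction $|F_i|$ divides $n$ for each $i$, so $V^{F_i} = V$ has dimension $N$. Consequently $e(V)_{F_i}$ is a nonzero rational multiple of $c_{F_i}^{N}$, which annihilates $m_{F_i}$ by hypothesis; for any other $F \in \F$, either $V^F = 0$ (in which case the component of $e(V) \cdot m$ is $m_F = 0$ since $F \notin \{F_1, \ldots, F_k\}$) or $m_F = 0$ outright. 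In either case $e(V) \cdot m = 0$, so $m$ is Euler torsion.

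The main conceptual point — really the only obstacle — is verifying the precise form of $e(V)_F$ in terms of powers of $c_F$, which is a direct unwinding of the definition $e(V)_F = e(V^F) \in H^{|V^F|}(B\bbT/F)$ together with the splitting of $V$ into characters. Once this is in hand, both implications reduce to componentwise bookkeeping, and the choice of $V = (z^n)^{\oplus N}$ in the reverse direction is dictated by the need to simultaneously produce a high enough power of $c_{F_i}$ at each required component while keeping $V^{\bbT} = 0$.
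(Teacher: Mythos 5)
Your proof is correct, and it is the natural direct argument one would give: you unwind the definition $e(V)_F = e(V^F)$, observe that $\mathrm{Supp}(V) = \{F : V^F \neq 0\}$ is finite for a finite-dimensional $V$ with $V^\bbT = 0$, note $e(V)_F = 1$ off this support and $e(V)_F$ is a nonzero multiple of $c_F^{\dim_{\mathbb{C}} V^F}$ on it, and then both implications are componentwise bookkeeping plus the explicit choice $V = (z^n)^{\oplus N}$ with $n = \mathrm{lcm}(|F_1|,\dots,|F_k|)$. The one point worth making explicit if this were written out fully is the computation $e(z^m)_F = (m/|F|)\,c_F$ for $|F| \mid m$ (using the additive formal group law in rational cohomology), which justifies both that the off-support components of $e(V)$ are $1$ and that the on-support components are nonzero scalar multiples of powers of $c_F$; you flag this as ``the only obstacle,'' and it is indeed the crux, but it is a one-line verification.

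Note that the paper itself does not prove this lemma; it is stated as a citation to \cite[4.6.6]{Greenlees99}, so there is no in-paper argument to compare against. Your reconstruction is, as far as I can tell, essentially the proof that reference gives (or would give): there is no real alternative route, since the statement is a direct translation of ``annihilated by some $e(V)$ with $V^\bbT = 0$'' into the two visible consequences (finite support, because $e(V)$ is a unit at all but finitely many $F$; and $c$-power torsion, because on its support $e(V)$ is a unit times a power of $c_F$), together with the converse construction of a suitable $V$.
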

   
\begin{lem}\label{lem-torsion-minima}
 The derived 
 category of $x$-power torsion $\Q[x]$-modules is generated by any non-zero module. 
 In particular, it is compactly generated by $\Q$. 
\end{lem}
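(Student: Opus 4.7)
The plan is to first establish compact generation by $\mathbb{Q}$ and then bootstrap this to show that any non-zero object generates the whole derived category. Compactness of $\mathbb{Q}$ will follow from the Koszul resolution $\mathbb{Q}[x] \xrightarrow{x} \mathbb{Q}[x]$, which exhibits $\mathbb{Q} = \mathbb{Q}[x]/(x)$ as a perfect complex over $\mathbb{Q}[x]$, hence compact in $\mathrm{D}(\mathbb{Q}[x])$. Since the full subcategory of $x$-power torsion complexes is localizing, sums therein agree with those in $\mathrm{D}(\mathbb{Q}[x])$, so $\mathbb{Q}$ remains compact.

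For generation by $\mathbb{Q}$ itself, I would use the same resolution to identify $R\mathrm{Hom}(\mathbb{Q}, X) \simeq \mathrm{fib}(x \colon X \to X)$. If this vanishes then $x$ acts as an isomorphism on $H^{\ast}(X)$; but $H^{\ast}(X)$ is $x$-power torsion, forcing $H^{\ast}(X) = 0$ and therefore $X \simeq 0$.

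The heart of the argument is to show $\mathbb{Q} \in \mathrm{Loc}(N)$ for any non-zero $N$. I would consider the derived tensor product $N \otimes^{L}_{\mathbb{Q}[x]} \mathbb{Q}$, which is equivalent to $\mathrm{cone}(x \colon N \to N)$ and therefore lies in $\mathrm{Loc}(N)$. It is non-zero by the dual of the argument above: if $x$ acted as a quasi-isomorphism on $N$, its cohomology would be simultaneously $x$-divisible and $x$-power torsion, hence zero. Now $N \otimes^{L}_{\mathbb{Q}[x]} \mathbb{Q}$ naturally lives in $\mathrm{D}(\mathbb{Q})$ via extension of scalars along $\mathbb{Q}[x] \to \mathbb{Q}$, $x \mapsto 0$; since $\mathbb{Q}$ is a field, every object of $\mathrm{D}(\mathbb{Q})$ is formal, and after restricting scalars back to $\mathrm{D}(\mathbb{Q}[x])$ the complex is quasi-isomorphic to $\bigoplus_{n} V_{n}[-n]$ with each $V_{n}$ a $\mathbb{Q}$-vector space. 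Since this object is non-zero, some $V_{n}$ is non-zero and contains $\mathbb{Q}$ as a retract, giving $\mathbb{Q} \in \mathrm{Loc}(N)$.

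The most delicate point is the formality step: one must justify that $N \otimes^{L}_{\mathbb{Q}[x]} \mathbb{Q}$, regarded as an object of $\mathrm{D}(\mathbb{Q}[x])$, truly splits as the direct sum of its cohomology shifts (not merely in $\mathrm{D}(\mathbb{Q})$). This rests on the fact that restriction of scalars along $\mathbb{Q}[x] \to \mathbb{Q}$ is an exact triangulated functor preserving arbitrary direct sums, so the formal splittings available in $\mathrm{D}(\mathbb{Q})$ descend to formal splittings in $\mathrm{D}(\mathbb{Q}[x])$.
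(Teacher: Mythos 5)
Your proof is correct and complete; the paper states this lemma without proof, so there is nothing to compare it against directly. The argument you give is a standard and clean one: $\Q$ is compact via the Koszul resolution $\Q[x]\xrightarrow{x}\Q[x]$ and generates because $R\Hom(\Q,-)$ detects vanishing on complexes with $x$-power torsion cohomology, while the key point that any non-zero $N$ generates comes from observing that $N\otimes^{L}_{\Q[x]}\Q\simeq \mathrm{cone}(x\colon N\to N)$ is non-zero (again by the torsion hypothesis on $H^{*}(N)$), lands in $\mathrm{Loc}(N)$, and is formal over the field $\Q$, hence contains some shift of $\Q$ as a retract after restricting scalars back to $\Q[x]$. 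You were right to flag the formality-descent step: it is correct precisely because restriction of scalars is exact, sum-preserving and conservative, so the splitting in $\mathrm{D}(\Q)$ gives a splitting of the same object in $\mathrm{D}(\Q[x])$. One small remark: the paper implicitly means the derived category of the abelian category of $x$-power torsion modules (this is the setting of its injective model structures on torsion modules), whereas parts of your argument take place in the full subcategory of $\mathrm{D}(\Q[x])$ of complexes with torsion cohomology; for $\Q[x]$ with the ideal $(x)$ these are canonically equivalent, so the argument goes through, but it is worth being explicit about which category one is in.
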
 

%\begin{proof}
%This follows from the classification of localizing subcategories of modules over $\Q[c]$. More precisely, we can use~\cite[\S 5]{GreenleesShipley13} to identify the derived 
%category of $c$-power torsion modules with the subcategory of derived $c$-torsion $\Q[c]$-modules. By definition any derived torsion module has support concentrated at the homogeneous prime ideal $c$. 
%By the classification result, two modules with the same support generate the same localizing subcategory.
%\end{proof}

\begin{cor}\label{cor-euler torsion}
  The derived category of Euler torsion $\cO_{\F}$-modules is compactly generated 
  by the modules $H^*(B\bbT/F)/(c_F)=\mathbb{Q}_F$ for finite subgroups $F$. 
\end{cor}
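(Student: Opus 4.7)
The plan is to reduce to the one-subgroup case by exploiting the orthogonal idempotent decomposition of $\cO_{\F} = \prod_{F \in \F} H^*(B\bbT/F)$, and then appeal to Lemma~\ref{lem-torsion-minima}.

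First, by Lemma~\ref{lem-euler torsion}(b), every Euler torsion $\cO_\F$-module $M$ decomposes as $M = \bigoplus_{F\in\F} e_F M$, where $e_F M$ is a module over $e_F\cO_\F = H^*(B\bbT/F) = \Q[c_F]$ on which $c_F$ acts locally nilpotently. Since this decomposition is natural and preserved by kernels, cokernels and arbitrary direct sums, the abelian category of Euler torsion $\cO_\F$-modules decomposes as the product $\prod_{F\in\F} \mathrm{Tors}_{c_F}(\Q[c_F])$ of the categories of $c_F$-power torsion $\Q[c_F]$-modules. This decomposition passes to the derived level since the idempotents $e_F$ give a decomposition of the identity functor that is compatible with quasi-isomorphisms.

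Next, by Lemma~\ref{lem-torsion-minima} each factor category $\Dsf(\mathrm{Tors}_{c_F}(\Q[c_F]))$ is compactly generated by $\Q = \Q[c_F]/(c_F)$. Under the decomposition above, the object corresponding to $\Q$ in the $F$-factor (and zero elsewhere) is precisely $\Q_F = H^*(B\bbT/F)/(c_F)$ viewed as an $\cO_\F$-module via the projection to the $F$-factor.

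Finally, it remains to assemble these generators across the product. An object supported in a single factor is compact in the product precisely when it is compact there, so each $\Q_F$ is compact in the derived category of Euler torsion $\cO_\F$-modules. Moreover $[\Q_F, M]_* = [\Q, e_F M]_*$, so the collection $\{\Q_F\}_{F \in \F}$ jointly detects acyclicity: if all of these graded Hom groups vanish then each component $e_F M$ is acyclic by Lemma~\ref{lem-torsion-minima}, hence $M$ is acyclic by the decomposition. Thus $\{\Q_F\}_{F\in\F}$ is a set of compact generators. The only mildly subtle point is verifying that the categorical product decomposition is compatible with the derived category and with compactness; this is where the local nilpotence of $c$ from Lemma~\ref{lem-euler torsion}(b) is essential, ensuring the sum $\bigoplus_F e_F M$ (rather than the product) recovers $M$.
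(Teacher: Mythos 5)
Your proposal is correct and follows essentially the same route as the paper: decompose the abelian category of Euler torsion $\cO_\F$-modules along the idempotents $e_F$ into a product of categories of $c_F$-power torsion $\Q[c_F]$-modules (using Lemma~\ref{lem-euler torsion}), then invoke Lemma~\ref{lem-torsion-minima} factor by factor. Your version spells out the compactness and joint-generation check across the factors a little more explicitly than the paper does, but the underlying argument is identical.
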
 

\begin{proof}
  Let $\alpha_F \colon \mathbb{T} \to \mathbb{T}$ be a representation with kernel 
  exactly $F$, and consider $e(\alpha_F) \in H^2(B\mathbb{T}/F)$. 
  Lemma~\ref{lem-euler torsion} tells us that an $\cO_{\F}$-module $M$ is Euler 
  torsion if and only if it decomposes as $M=\bigoplus_{F}e_F M$ with 
  $e_F M$ an $e(\alpha_F)$-torsion $H^*(B\mathbb{T}/F)$-module. 
  It follows that the functor $M \mapsto (e_F M)_F$ defines an equivalence between 
  the category of Euler torsion $\cO_\F$-modules and the category 
  \[
  \coprod_{F \in \F} e(\alpha_F)\text{-tors-}H^*(B\mathbb{T}/F)\text{-mod}.
  \]
  Therefore the claim follows from Lemma~\ref{lem-torsion-minima} by recalling 
  that there is a canonical isomorphism of rings 
  $\mathbb{Q}[c_F] \to H^*(B\mathbb{T}/F) $ sending $c_F$ to $e(\alpha_F)$. 
\end{proof}

\subsection{Fixed points and commutativity}\label{sec:fiddlingwithmodels}
In this section we describe how to ensure a good interplay between commutativity and cofibrancy when taking categorical fixed points. This is harder than it first appears, due to the fact that categorical fixed points is not a Quillen functor between the flat model structures.

Suppose that we have a map $S \to R$ of commutative ring $G$-spectra. In addition, suppose that $R$ is a cofibrant $S$-module. Note that this can be forced if we work in the flat model structure, see Section~\ref{sec:convenient model structure} for more details. We wish to choose a suitable cofibrant replacement for $R^G$, denoted by $Q_f(R^G)$, with the following properties:
\begin{itemize}
\item[(a)] there is a map of commutative ring spectra $\phi\colon S^G \to Q_f(R^G)$;
\item[(b)] $\phi$ exhibits $Q_f(R^G)$ as a cofibrant $S^G$-module.
\end{itemize}
Condition (b) ensures that the restriction of scalars functor $\phi^*$ is left Quillen by Lemma~\ref{lem:restrictionleftQuillen}. This in turn guarantees that the sections of the required diagram category has a \texttt{dimp} model structure. Condition (a) is enforced to ensure that after applying Shipley's theorem~\cite[1.2]{Shipley07} we obtain commutative DGAs, and as such, may apply formality arguments.

We now show that a cofibrant replacement satisfying conditions (a) and (b) does exist. In addition, the replacement we construct will have other convenient properties which we state as Lemmas~\ref{lem:fixedpoints} and~\ref{lem:stabletoflat}. 

Taking categorical $G$-fixed points, we obtain a map of commutative ring spectra $S^G \to R^G$ since $(-)^G$ is lax symmetric monoidal. We now replace $R^G$ as a commutative $S^G$-algebra in the flat model structure, which gives a commutative diagram 
\[\begin{tikzcd}
S^G \arrow[r, hookrightarrow] \arrow[dr] & Q_f(R^G) \arrow[d, twoheadrightarrow, "\sim"] \\
& R^G
\end{tikzcd} \]
of commutative ring spectra. Then condition (a) is satisfied and (b) follows from~\cite[4.1]{Shipley04}. 

We note that one could also cofibrantly replace $R^G$ as an $S^G$-algebra in the stable model structure to give $\beta\colon Q(R^G) \to R^G$, but that it may not be possible to satisfy both (a) and (b) with such a replacement. Nonetheless, this alternative cofibrant replacement acts as a useful stepping stone between $G$-spectra and non-equivariant spectra. This will play an important role in the next section, via the following two lemmas.
\begin{lem}\label{lem:fixedpoints}
There is a composite Quillen adjunction
\[\begin{tikzcd}\mathrm{Mod}_R^\mathrm{flat}(\Sp_G) \arrow[r, "1"', yshift=-1mm] & \mathrm{Mod}_R^\mathrm{stable}(\Sp_G) \arrow[l, "1"', yshift=1mm] \arrow[rrr, yshift=-1mm, "(-)^G"'] & & & \mathrm{Mod}_{R^G}^\mathrm{stable}(\Sp) \arrow[lll, yshift=1mm, "R \otimes_{\mathrm{inf}R^G} \mathrm{inf}(-)"'] \arrow[rr, yshift=-1mm, "\beta^*"'] & & \mathrm{Mod}_{Q(R^G)}^\mathrm{stable}(\Sp). \arrow[ll, yshift=1mm, "\beta_*"'] \end{tikzcd}\] 
Moreover, this composite Quillen adjunction is a Quillen equivalence if and only if the middle adjunction is a Quillen equivalence.
\end{lem}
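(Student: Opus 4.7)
The plan is to verify that each of the three displayed adjunctions is individually Quillen, observe that the two outer adjunctions are automatically Quillen equivalences, and then invoke the standard fact that in a composite of Quillen adjunctions whose outer factors are Quillen equivalences, the composite is a Quillen equivalence if and only if the middle factor is.

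First I would check each piece is Quillen. Shipley's flat model structure has the same weak equivalences as the stable structure but strictly more cofibrations, so the identity functor from the stable to the flat structure is left Quillen; and because the weak equivalences coincide, this identity adjunction is automatically a Quillen equivalence. For the middle adjunction, I would invoke the well-known fact that the categorical fixed-point functor $(-)^G\colon \Sp_G \to \Sp$ is right Quillen in the stable model structure; being lax symmetric monoidal, it lifts to a right Quillen functor on module categories with the indicated left adjoint $R\otimes_{\mathrm{inf}R^G}\mathrm{inf}(-)$. For the third adjunction, extension of scalars $\beta_\ast$ is left Quillen in the projective/stable model structure by Lemma~\ref{lem:restrictionleftQuillen}, and I would argue it is a Quillen equivalence using standard Morita invariance: the map $\beta\colon Q(R^G)\to R^G$ is by construction a weak equivalence of ring spectra with $Q(R^G)$ cofibrant as a commutative $S^G$-algebra in the stable model structure, which is enough to apply the Schwede--Shipley theorem (see e.g.~\cite[4.3]{SchwedeShipley00}).

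Composing the three Quillen adjunctions then yields the asserted composite Quillen adjunction. For the ``if and only if'' statement, the total left derived functor of a composite of Quillen adjunctions is (up to natural isomorphism) the composite of the total left derived functors, since each left Quillen functor preserves weak equivalences between cofibrant objects. By the previous paragraph the outer two total derived adjunctions are equivalences of homotopy categories, so the derived functor of the composite is an equivalence if and only if the derived functor of the middle piece is, which is exactly the statement that the composite is a Quillen equivalence if and only if the middle adjunction is.

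The main obstacle I anticipate is verifying the Morita invariance hypothesis in the third step: one must ensure that $Q(R^G)$, which is cofibrant as an $S^G$-algebra in the \emph{stable} model structure (not the flat one), is cofibrant enough for the Schwede--Shipley argument. This is precisely why Section~\ref{sec:fiddlingwithmodels} introduces the stable-model cofibrant replacement $Q(R^G)$ in parallel with the flat-model replacement $Q_f(R^G)$: only the former satisfies the hypotheses required here, while the latter is needed elsewhere to obtain commutativity and appropriate module cofibrancy.
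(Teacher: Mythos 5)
Your proof is correct and follows essentially the same strategy as the paper's: decompose the composite into its three pieces, check each is Quillen (identity between stable and flat; lifting of the lax monoidal fixed-points functor via the adjoint lifting theorem, cf.\ \cite[\S 3.3]{SchwedeShipley03}; extension/restriction along the weak equivalence $\beta$), note the outer two are Quillen equivalences, and conclude by the 2-out-of-3 property for derived equivalences. The potential obstacle you flag about $Q(R^G)$ is not a real issue here --- the paper's choice of a stable cofibrant replacement $Q(R^G)$ is made precisely so that the third adjunction falls under the standard change-of-rings result, and the tension between stable and flat cofibrancy is resolved separately in Lemma~\ref{lem:stabletoflat}.
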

\begin{proof}
The middle adjunction arises from the adjoint lifting theorem since $(-)^G$ is lax monoidal; see~\cite[\S 3.3]{SchwedeShipley03} or~\cite{GreenleesShipley14c} for more details. The left hand adjunction is a Quillen equivalence since the weak equivalences in the stable and flat model structure are the same, and the right hand adjunction is a Quillen equivalence since $\beta$ is a weak equivalence.
\end{proof}

\begin{lem}\label{lem:stabletoflat}
There is a map of ring spectra $\alpha\colon Q(R^G) \to Q_f(R^G)$ which is a weak equivalence. Therefore there is a Quillen equivalence
\[\begin{tikzcd}
\mathrm{Mod}_{Q(R^G)}^{\mathrm{stable}} \arrow[r, yshift=1mm, "\alpha_*"] & \mathrm{Mod}_{Q_f(R^G)}^{\mathrm{flat}} \arrow[l, yshift=-1mm, "\alpha^*"]
\end{tikzcd} \]
given by extension and restriction of scalars. 
\end{lem}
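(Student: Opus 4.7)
The plan is to construct $\alpha$ using the lifting property and then propagate the Quillen equivalence through a factorization of the adjunction.

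First I would produce the map $\alpha$. Recall that $Q(R^G)$ is a cofibrant commutative $S^G$-algebra in the stable model structure and $Q_f(R^G)$ is a cofibrant commutative $S^G$-algebra in the flat model structure, both equipped with acyclic fibrations to $R^G$. Since the identity functor from the stable model structure to the flat model structure on commutative $S^G$-algebras is a left Quillen equivalence (this is the content of \cite[4.1]{Shipley04} lifted to the level of commutative algebras), every cofibration of commutative $S^G$-algebras in the stable model structure is also a cofibration in the flat one. In particular, $Q(R^G)$ is cofibrant as a commutative $S^G$-algebra in the flat model structure. Therefore the solid diagram
\[
\begin{tikzcd}
S^G \arrow[r] \arrow[d] & Q_f(R^G) \arrow[d, twoheadrightarrow, "\sim"] \\
Q(R^G) \arrow[r, "\sim"'] \arrow[ur, dashed, "\alpha"] & R^G
\end{tikzcd}
\]
admits a lift $\alpha$ in the flat model structure on commutative $S^G$-algebras, and $\alpha$ is a weak equivalence by two-out-of-three.

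For the Quillen equivalence, I would factor the stated adjunction as the composite
\[
\begin{tikzcd}
\mathrm{Mod}_{Q(R^G)}^{\mathrm{stable}} \arrow[r, yshift=1mm, "\alpha_\ast"] & \mathrm{Mod}_{Q_f(R^G)}^{\mathrm{stable}} \arrow[r, yshift=1mm, "1"] \arrow[l, yshift=-1mm, "\alpha^\ast"] & \mathrm{Mod}_{Q_f(R^G)}^{\mathrm{flat}}. \arrow[l, yshift=-1mm, "1"]
\end{tikzcd}
\]
The first adjunction is a Quillen equivalence by the standard result that extension/restriction of scalars along a weak equivalence of (sufficiently cofibrant) monoids is a Quillen equivalence in the projective model structure; see \cite[4.3]{SchwedeShipley00}. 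Both $Q(R^G)$ and $Q_f(R^G)$ are cofibrant as $S^G$-modules by Lemma~\ref{lem:restrictionleftQuillen} applied to the chosen cofibrant replacements, so this result applies. The second adjunction is a Quillen equivalence because the identity from stable to flat is a Quillen equivalence on $\mathrm{Sp}$ and lifts to modules over any fixed ring. Composing gives the claimed Quillen equivalence.

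The only mildly subtle point is the interplay between cofibrancy of commutative $S^G$-algebras in the two model structures and the lifting argument for $\alpha$; once this is in hand the rest is a routine application of standard change-of-rings machinery.
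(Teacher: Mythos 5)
The strategy you use for constructing $\alpha$ is the formal dual of the paper's: the paper observes that the flat acyclic fibration $Q_f(R^G)\to R^G$ is a \emph{stable} acyclic fibration (since the identity from flat to stable is right Quillen) and lifts in the stable model structure; you observe that the stable cofibration $S^G\to Q(R^G)$ is a \emph{flat} cofibration (since the identity from stable to flat is left Quillen) and lift in the flat model structure. Both rest on the same underlying fact, and the treatment of the Quillen equivalence afterwards is the same as the paper's implicit factorization. However, there is a real gap in the premise from which you run the lifting argument.

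You state that $Q(R^G)$ is a cofibrant \emph{commutative} $S^G$-algebra in the stable model structure, and then invoke a left Quillen equivalence from stable to flat on \emph{commutative} $S^G$-algebras. This is not what the paper sets up: in Section~\ref{sec:fiddlingwithmodels} the object $Q(R^G)$ is taken to be a cofibrant replacement of $R^G$ as an (associative) $S^G$-algebra in the stable model structure, precisely because a commutative replacement in that setting would fail condition (b) -- this is the whole point of the convenient model structure discussion. Working with commutative algebras in the stable model structure is delicate (one normally needs the positive model structure even to transfer a model structure to commutative algebras, and then cofibrant commutative algebras need not be cofibrant modules), so the assertion that the identity on commutative $S^G$-algebras is left Quillen from stable to flat requires justification you have not given and is not covered by \cite[4.1]{Shipley04}, which concerns the underlying category of spectra. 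The paper sidesteps all of this by lifting in the category of associative ring spectra, where the stable model structure is unproblematic and where the commutative object $Q_f(R^G)$ lives via forgetting commutativity. Your lift would go through if you carried it out in associative $S^G$-algebras (where stable cofibrations are indeed flat cofibrations, and where $Q(R^G)$ genuinely is cofibrant), so the idea is repairable, but as written it rests on a premise about $Q(R^G)$ that contradicts the paper's construction and on an unsupported claim about the model category of commutative algebras.
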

\begin{proof}
There is a commutative square
\[\begin{tikzcd}
 S^G \arrow[r] \arrow[d, hookrightarrow, "\mathrm{stable}"'] & Q_f(R^G)\arrow[d, twoheadrightarrow, "\sim"', "\mathrm{flat}"] \\
 Q(R^G) \arrow[r, twoheadrightarrow, "\sim", "\mathrm{stable}"'] & R^G\end{tikzcd}\]
of ring spectra. Since the right hand vertical is an acyclic flat fibration and hence an acyclic stable fibration, there is a lift $Q(R^G) \to Q_f(R^G)$ in the square as a map of rings. Moreover, this is a weak equivalence by the 2-out-of-3 property. The claimed Quillen equivalence follows since the identity functor is a right Quillen equivalence from the flat model structure to the stable model structure.
\end{proof}

\subsection{The Quillen equivalences}
The goal of this section is to prove the following result.
  
\begin{thm}\label{thm-torsion model for T-spectra}
	There is a Quillen equivalence 
	\[
	\Sp_\mathbb{T} \simeq_Q d\A_t(\mathbb{T}).
	\]
\end{thm}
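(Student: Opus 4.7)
The plan is to assemble the equivalence as a long zig-zag, starting from the abstract torsion model of Theorem~\ref{thm:torsionmodel} applied to $\C = \Sp_\bbT$ with $V = \F$, and ending at $d\A_t(\bbT)$ via Shipley's theorem, formality and the cellular skeleton theorem. Concretely, Theorem~\ref{thm:torsionmodel} together with the identifications in Section~\ref{sec:rational equivariant spectra} gives a Quillen equivalence between $\Sp_\bbT$ and the cellularization $\cell{\G_{\mathrm{tors}}}\cptorse$ of the extended pre-torsion sections built from the diagram of $\bbT$-equivariant commutative ring spectra
\[
\widetilde{E}\F \longrightarrow DE\F_+ \wedge \widetilde{E}\F \longleftarrow DE\F_+.
\]
The first task is therefore to replace this diagram of $\bbT$-spectra by the algebraic diagram $\Q \to t_\F \leftarrow \cO_\F$ of Proposition~\ref{prop-formality}, after which the cellular skeleton theorem (Theorem~\ref{thm:cellularskeleton}) will land us in $d\A_t(\bbT)$.

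First I would pass from $\bbT$-equivariant modules to non-equivariant modules. Using the machinery of Section~\ref{sec:fiddlingwithmodels}, I choose a cofibrant replacement $DE\F_+ \to \widetilde{R}$ as a commutative ring $\bbT$-spectrum in the flat model structure, and then cofibrantly replace $\widetilde{R}^\bbT$ and $(\widetilde{R}\wedge\widetilde{E}\F)^\bbT$ as commutative $\widetilde{R}^\bbT$-algebras in the flat model structure. Lemmas~\ref{lem:fixedpoints} and~\ref{lem:stabletoflat} then give compatible Quillen equivalences between categories of $\bbT$-equivariant modules over $\widetilde{R}$ (resp.\ $\widetilde{R}\wedge\widetilde{E}\F$, resp.\ $\widetilde{E}\F$) and non-equivariant modules over suitable flat cofibrant models of their categorical $\bbT$-fixed points. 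These Quillen equivalences assemble into a Quillen equivalence of section categories, which pass to the cellularization since the generators $\G_{\mathrm{tors}}$ are small and the derived units are equivalences on them.

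Second, I apply Shipley's Dold--Kan theorem~\cite{Shipley07} diagramwise to convert the resulting diagram of commutative $H\Q$-algebras into a diagram of commutative DGAs whose cohomology is, by Proposition~\ref{prop-formality}, exactly $\Q \to t_\F \leftarrow \cO_\F$. The intrinsic formality statement in Proposition~\ref{prop-formality} then produces a zig-zag of quasi-isomorphisms of cospans of commutative DGAs to the discrete diagram $\Q \to t_\F \leftarrow \cO_\F$. Each quasi-isomorphism of rings induces a Quillen equivalence on module categories (after ensuring cofibrancy of the base-change maps as in Section~\ref{sec:nc}/\ref{sec:convenient model structure}), and these assemble into a Quillen equivalence between the cellularized pre-torsion sections over the spectral diagram and the cellularized pre-torsion sections $\cell{\L}\widehat{\A}_t^{\mathtt{dimp}}$ of the diagram
\[
\begin{tikzcd}
\mod{\Q} \arrow[d, "\bullet" description] & \\
\mod{t_\F} \arrow[r] & \mod{\cO_\F}.
\end{tikzcd}
\]

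Finally, I apply the cellular skeleton theorem (Theorem~\ref{thm:cellularskeleton}) to the ring diagram $\cO_\F \to t_\F \leftarrow \Q$ equipped with the Euler torsion functor; Hypothesis~\ref{hyp-torsion preserves injective} holds by \cite[17.3.3, 17.3.5]{Greenlees99}, and Corollary~\ref{cor-euler torsion} supplies the required set $\K = \{\Q_F\}_{F \in \F}$ of compact generators of the derived category of Euler torsion $\cO_\F$-modules. To match the cellularization I need to check that the cellularizing sets correspond: the cell $(t_\F \to 0)$ corresponds, under the zig-zag above, to the $\widetilde{E}\F$-piece of $\G_\mathrm{tors}$, while the cells $(0 \to \Q_F)$ correspond to the image of the $\bbT$-cells $\bbT/F_+$ tensored with $\Gamma_\F\1$ under the composite equivalence. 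The hard part will be bookkeeping through the many cellularizations: ensuring that at each Quillen equivalence the compact generators used to cellularize get matched to the generators on the other side (up to replacement inside the localizing subcategory), so that the Cellularization Principle applies at every step. Once these cells are tracked, composing the chain of Quillen equivalences yields $\Sp_\bbT \simeq_Q d\A_t(\bbT)$, and the derived equivalence of homotopy categories is automatically triangulated.
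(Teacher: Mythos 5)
Your proposal follows essentially the same route as the paper's proof: apply Theorem~\ref{thm:torsionmodel} to $\Sp_\bbT$ with the family $\F$, pass to non-equivariant modules via fixed points using the machinery of Section~\ref{sec:fiddlingwithmodels} together with the Quillen equivalences of Lemma~\ref{lem:EM}, apply Shipley's algebraicization and the intrinsic formality of $\Q \to t_\F \leftarrow \cO_\F$, and finish with the cellular skeleton theorem. Be aware, though, that the cell correspondence you assert is not literal: the images of $\bbT/\bbT_+$ and $\bbT/F_+$ in the algebraic torsion category have homotopy $(t_\F \to \bigoplus_F \Sigma^2 \mathbb{I}(F))$ and $(0 \to \Sigma^2 A(F))$, not $(t_\F \to 0)$ and $(0 \to \Q_F)$, and one must argue separately (as in Lemma~\ref{lem-cell}, using Corollary~\ref{cor-euler torsion}) that the two sets generate the same localizing subcategory before Theorem~\ref{thm:cellularskeleton} can be applied.
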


  We divide the proof into several steps. Since the formality argument is a vital step and it relies upon \emph{commutativity}; we must use a convenient model structure, see the discussion in Section~\ref{sec:convenient model structure}. Therefore, we will work with the flat model structure on equivariant spectra as in~\cite{BrunDundasStolz}. This introduces some complications since the categorical fixed points functor is not Quillen as a functor $\Sp_G^\mathrm{flat} \to \Sp^\mathrm{flat}$. We shall use the results of Section~\ref{sec:fiddlingwithmodels} to overcome these complications.
  
\subsection*{Step 1}
   Consider the category $\Sp_{\mathbb{T}}$ with $L=L_{\F^c}$ the
   $\F$-nullification functor given by
   $LX=\widetilde{E}\F \sm X$. 
   We apply Theorem~\ref{thm:torsionmodel} to deduce a Quillen 
   equivalence
  \[
  \Sp_\mathbb{T} \simeq _Q \mathrm{cell}\text{-} 
  \left(
  \begin{tikzcd}
   \mod{\widetilde{E}\F} \arrow[d,"\bullet" description] & \\ 
   \mod{DE\F_+ \wedge \widetilde{E}\F} \arrow[r]
                               & \mod{DE\F_+} 
  \end{tikzcd}
  \right)
  \]
  where the right hand side uses the flat model structure.

\subsection*{Step 2}
  The next step is to take categorical fixed points to remove equivariance. Since the categorical $\mathbb{T}$-fixed points functor  
  $\Sp_{\mathbb{T}} \to \Sp$ is \emph{not} right Quillen between the flat model 
  structures we must employ the strategy described in Section~\ref{sec:fiddlingwithmodels}. Loosely speaking we first neglect the commutativity to ensure that the relevant functors are Quillen. Once we have then passed to the non-equivariant world, one can reclaim commutativity of the rings.
  
  Firstly we recall that the following result.
  \begin{lem}\label{lem:EM}
 There are Quillen equivalences
\begin{itemize}
\item \cite[4.3, 9.1]{GreenleesShipley14c}
  \[
\mod{DE\F_+} \simeq_Q \mod{(DE\F_+)^\mathbb{T}}
  \]
\item \cite[7.1]{GreenleesShipley14c}
\label{lem:EFtildegenerates}
  \[
\mod{\widetilde{E}\F} \simeq_Q \mod{S^0}
  \]
\item \label{lem:DEF and EFtilde generate}
\cite[7.1]{GreenleesShipley14c}
  \[
  \mod{DE\F_+ \wedge \widetilde{E}\F} \simeq_Q
  \mod{(DE\F_+ \wedge \widetilde{E}\F)^{\mathbb{T}}}
  \]
\end{itemize}
\end{lem}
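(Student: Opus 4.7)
All three Quillen equivalences fit a single pattern: for a commutative ring $\mathbb{T}$-spectrum $R$ with $R \in \{\widetilde{E}\F,\, DE\F_+,\, DE\F_+ \wedge \widetilde{E}\F\}$, we identify $\mod{R}(\Sp_\mathbb{T})$ with modules over the non-equivariant ring spectrum $R^\mathbb{T}$. My plan is to exhibit each equivalence by running the categorical fixed points adjunction: since $(-)^\mathbb{T}$ is lax symmetric monoidal it refines to a right adjoint out of $\mod{R}(\Sp_\mathbb{T})$, with left adjoint $R \otimes_{\mathrm{inf}(R^\mathbb{T})} \mathrm{inf}(-)$. I would then invoke Lemmas~\ref{lem:fixedpoints} and~\ref{lem:stabletoflat} to shuttle between the flat model structure (needed for commutativity, and hence for the formality arguments downstream) and the stable model structure (where $(-)^\mathbb{T}$ is right Quillen).

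The heart of the argument is showing that the middle adjunction
\[
R \otimes_{\mathrm{inf}(R^\mathbb{T})} \mathrm{inf}(-) \dashv (-)^\mathbb{T} \colon \mod{R^\mathbb{T}}(\Sp) \leftrightarrows \mod{R}(\Sp_\mathbb{T})
\]
is a Quillen equivalence in the stable model structure. By a standard compactness plus localizing subcategory argument, this reduces to (i) verifying that $R$ generates $\mod{R}(\Sp_\mathbb{T})$ as a localizing subcategory, and (ii) checking the derived counit is a weak equivalence on the single compact generator $M = R$. Item (ii) amounts to identifying $\pi_*^\mathbb{T}(R)$ with $\pi_*(R^\mathbb{T})$, which for the three cases gives $\mathbb{Q}$, $\cO_\F$, and $t_\F$ respectively. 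These identifications are precisely those recorded in Proposition~\ref{prop-formality} and go back to standard Tate-cohomological computations in \cite{Greenlees99}.

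The main obstacle is the conflict between commutativity and cofibrancy already flagged in Section~\ref{sec:fiddlingwithmodels}: categorical fixed points is not right Quillen between the flat model structures, yet we need the flat setting to preserve commutative multiplications for formality. The resolution is to detour through the stable model structure via the zig-zag of Quillen equivalences in Lemmas~\ref{lem:fixedpoints} and~\ref{lem:stabletoflat}, landing finally in $\mod{Q_f(R^\mathbb{T})}^{\mathrm{flat}}$ where $Q_f(R^\mathbb{T})$ is a cofibrant commutative $S^0$-algebra weakly equivalent to $R^\mathbb{T}$. A secondary subtlety is the generation step: for $\widetilde{E}\F$ it follows from the fact that $\widetilde{E}\F$-local $\mathbb{T}$-spectra are detected by $\mathbb{T}$-fixed points, while for $DE\F_+$ and $DE\F_+ \wedge \widetilde{E}\F$ one uses that the unit of each module category is compact to reduce the counit check to the single computation in~(ii).
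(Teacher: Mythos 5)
The paper does not actually prove this lemma --- all three equivalences are imported wholesale from \cite{GreenleesShipley14c} --- so your proposal is really a reconstruction of the cited proofs. Your skeleton (run the adjunction $R \otimes_{\mathrm{inf}(R^\mathbb{T})}\mathrm{inf}(-) \dashv (-)^\mathbb{T}$ in the stable model structure and shuttle to the flat structure via Lemmas~\ref{lem:fixedpoints} and~\ref{lem:stabletoflat}) is the right one and matches how the lemma is deployed in Step 2. But you have misplaced the difficulty. Your item (ii) is essentially contentless: the derived unit on the compact generator $R^\mathbb{T}$ is $R^\mathbb{T} \to \bigl(R \otimes_{\mathrm{inf}(R^\mathbb{T})} \mathrm{inf}(R^\mathbb{T})\bigr)^\mathbb{T} \simeq R^\mathbb{T}$, an equivalence for formal reasons, and the identification $\pi_*^\mathbb{T}(R) \cong \pi_*(R^\mathbb{T})$ holds by definition of categorical fixed points --- no Tate-cohomological computation is needed here (those computations enter later, in Proposition~\ref{prop-formality}, for formality). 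The entire content of the lemma is your item (i): that $R$ generates $\mod{R}(\Sp_\mathbb{T})$ as a localizing subcategory, equivalently that $R \wedge \mathbb{T}/H_+ \in \mathrm{Loc}(R)$ for every closed subgroup $H \leq \mathbb{T}$, since these are the standard generators of the module category.

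For $R = \widetilde{E}\F$ and $R = DE\F_+ \wedge \widetilde{E}\F$ this generation is easy: for finite $H$ the spectrum $\mathbb{T}/H_+$ has isotropy in $\F$, so $R \wedge \mathbb{T}/H_+ \simeq 0$ for any $\widetilde{E}\F$-module $R$, and the only surviving generator is $R \wedge \mathbb{T}/\mathbb{T}_+ = R$. (Note that you have grouped $DE\F_+ \wedge \widetilde{E}\F$ with the hard case; it belongs with the easy one, which is why the paper cites [7.1] for both.) For $R = DE\F_+$ the generation statement is genuinely nontrivial, and your justification --- ``the unit of each module category is compact'' --- does not deliver it: compactness of $R$ in $\mod{R}(\Sp_\mathbb{T})$ yields that the derived left adjoint is fully faithful on the localizing subcategory generated by $R^\mathbb{T}$, but says nothing about whether its essential image is everything, i.e.\ whether $DE\F_+ \wedge \mathbb{T}/H_+ \in \mathrm{Loc}(DE\F_+)$ for finite $H$. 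That verification (rationally, for the circle) is precisely the substance of \cite[9.1]{GreenleesShipley14c} and must be supplied; as written, your argument only shows that $\mod{(DE\F_+)^\mathbb{T}}$ embeds into $\mod{DE\F_+}$.
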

 
We now use the notation of Section~\ref{sec:fiddlingwithmodels} for the necessary cofibrant replacements. There is a Quillen equivalence 
\[\left(\begin{tikzcd}
\mod{\widetilde{E}\F} \arrow[d, "\bullet" description] \\
\mod{DE\F_+ \sm \widetilde{E}\F} \arrow[r] & \mod{DE\F_+}
\end{tikzcd}\right)\leftrightarrows 
\left(\begin{tikzcd}
\mod{S^0} \arrow[d, "\bullet" description] \\
\mod{(DE\F_+ \wedge \widetilde{E}\F)^{\mathbb{T}}} \arrow[r] & \mod{Q(DE\F_+^\mathbb{T})}
\end{tikzcd}\right)\]
where the left hand side is equipped with the flat model structure, and the right hand side is equipped with the stable model structure. This Quillen equivalence follows from Lemmas~\ref{lem:fixedpoints} and~\ref{lem:EM}.

Next, we claim that there is a Quillen equivalence
\[\left(\begin{tikzcd}
\mod{S^0} \arrow[d, "\bullet" description] \\
\mod{(DE\F_+ \wedge \widetilde{E}\F)^{\mathbb{T}}} \arrow[r] & \mod{Q(DE\F_+^\mathbb{T})}
\end{tikzcd}\right) \rightleftarrows \left(\begin{tikzcd}
\mod{S^0} \arrow[d, "\bullet" description] \\
\mod{(DE\F_+ \wedge \widetilde{E}\F)^{\mathbb{T}}} \arrow[r] & \mod{Q_f(DE\F_+^\mathbb{T})}
\end{tikzcd}\right) \]
where the left hand side has the stable model structure, but the right hand side has the flat model structure. The functors are the identity on the leftmost vertices and as such are Quillen equivalences since the identity functor from the stable to the flat model structure is a left Quillen equivalence. In the bottom right vertex, applying Lemma~\ref{lem:stabletoflat} shows that we have a Quillen equivalence. We emphasize that each of the rings appearing on the right hand side is commutative; this is vital for the next step.

\subsection*{Step 3}
  We now apply Shipley's theorem~\cite[1.2]{Shipley07} which says that given a commutative $H\mathbb{Q}$-algebra $A$, there is a commutative DGA $\Theta A$ and a Quillen equivalence $\mod{A} \simeq_Q \mod{\Theta A}$, and that moreover $H_*(\Theta A) = \pi_*A$. Therefore we have a Quillen equivalence
   \[
  \left(
  \begin{tikzcd}
   \mod{S^0} \arrow[d, "\bullet" description] & \\ \mod{(DE\F_+ \wedge \widetilde{E}\F)^\mathbb{T}} \arrow[r]
                               & \mod{Q_f(DE\F_+^\mathbb{T})} 
  \end{tikzcd}
  \right)
  \simeq_Q
  \left(
  \begin{tikzcd}
  \mod{\Theta (S^0)} \arrow[d,"\bullet" description] & \\ \mod{\Theta ((DE\F_+ \wedge \widetilde{E}\F)^\mathbb{T})} \arrow[r]
                     & \mod{\Theta (Q_f(DE\F_+^\mathbb{T}))}
  \end{tikzcd}
  \right)
  \]
  where each of the DGAs appearing is commutative. By Proposition~\ref{prop-formality}, we know that the cospan $\cO_\F \to t_\F \leftarrow \mathbb{Q}$ is intrinsically formal as a cospan of commutative DGAs.
 This gives a Quillen equivalence 
  \[
  \left(
  \begin{tikzcd}
  \mod{\Theta (S^0)} \arrow[d,"\bullet" description] & \\ \mod{\Theta ((DE\F_+ \wedge \widetilde{E}\F)^\mathbb{T})} \arrow[r]
                     & \mod{\Theta (Q_f(DE\F_+^\mathbb{T}))}
  \end{tikzcd}
  \right)
  \simeq_Q
  \left(
  \begin{tikzcd}
  \mod{\Q} \arrow[d,"\bullet" description] & \\ \mod{t_{\F}} \arrow[r]
                     & \mod{\cO_{\F}}
  \end{tikzcd}
  \right).
  \]
 We note that Shipley's algebraicization theorem still holds in the flat model structure by~\cite{Williamsonflat}.
  To simplify notation we call the (underlying) category on the right $\widehat{\A}_t(\mathbb{T})$. 
  
\subsection*{Step 4}
Combining the results of the previous steps, we see that we have a zig-zag of Quillen equivalences \[\Sp_\bbT \simeq_Q \mathrm{cell}\text{-}\widehat{\A}_t(\mathbb{T}).\] We now describe the effect of the cellularization on 
  $\widehat{\A}_t(\mathbb{T})$ and use the cellular skeleton theorem to internalize the cellularization.  
  Recall that an object in 
  $\widehat{\A}_{t}(\mathbb{T})$ consists of a $\mathbb{Q}$-module $V$, a 
  $t_{\F}$-module $W$ and an $\cO_\F$-module $N$, together with module maps
  $\alpha \colon t _{\F}\otimes V \to W$ and $\beta \colon W \to N$. 
  We also required the $t_{\F}$-module map $\alpha$ to be an isomorphism. 
  As this data is equivalent to an $\cO_\F$-module map 
  $t_{\F} \otimes V \to N$, we will use this shorthand 
  notation to denote an object in the category $\widehat{\A}_t(\mathbb{T})$.
  
\begin{lem}\label{lem-cell}
 The cellularization of $\widehat{\A}_t(\mathbb{T})$ can be taken at the set of cells \[\{(t_\F \to 0), (0 \to \mathbb{Q}_F)_{F \in \F}\}.\]
\end{lem}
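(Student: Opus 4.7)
The strategy is to start from the explicit set of cells provided by Theorem~\ref{thm:bifibrant} and transport them through the zig-zag of Quillen equivalences in Steps~1--3, simplifying at each stage using features of rational $\bbT$-spectra. Since every cell involved is compact, a cellularization depends only on the localizing subcategory generated by the cells, so we are free to replace the cell set by any set generating the same localizing subcategory.

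First, by Theorem~\ref{thm:bifibrant} the cellularization of $\cptorse$ (in the $\bbT$-spectrum picture) may be taken at
\[
\L=\{(0 \to \Lambda\1 \otimes K_{\p_F} \otimes G)\}_{F\in\F,\,G\in\G}\ \cup\ \{(L\Lambda\1 \otimes G \to 0)\}_{G\in\G},
\]
where $K_{\p_F}$ is a Koszul object at the closed point $\p_F$ and $\G=\{S^0\}\cup\{\bbT/F_+ : F\in\F\}$ is a set of compact generators of rational $\Sp_\bbT$. I will then show that after transport each cell indexed by $G=\bbT/F_+$ becomes trivial, that the single cell $(L\Lambda\1 \to 0)$ indexed by $G=S^0$ is sent to $(t_\F\to 0)$, and that the family of cells $\{(0\to \Lambda\1\otimes K_{\p_F}\otimes G)\}$ generates the same localizing subcategory of $\widehat{\A}_t(\bbT)$ as the family $\{(0\to\mathbb{Q}_F)\}_{F\in\F}$.

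For the ``horizontal'' cells, when $G=\bbT/F_+$ with $F\in\F$ a direct geometric-fixed-points computation shows $\Phi^H(\widetilde E\F\wedge \bbT/F_+)\simeq *$ for every closed $H$ (either $\Phi^H(\widetilde E\F)$ or $\Phi^H(\bbT/F_+)$ vanishes), so $L\1\otimes G\simeq 0$ and the cell is trivial after translation. For $G=S^0$ the cell is $(L\Lambda\1\to 0)$, and Proposition~\ref{prop-formality} together with the identifications $\pi^\bbT_*(\widetilde E\F)=\Q$ and $\pi^\bbT_*(L\Lambda\1)=t_\F$ produce the cell $(t_\F\to 0)$ in $\widehat{\A}_t(\bbT)$ as required. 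For the ``vertical'' cells, note that $\Lambda\1\otimes K_{\p_F}\otimes G$ is a torsion $\Lambda\1$-module, so by Lemma~\ref{lem:torsion over complete ring} and the equivalences of Steps~2--3 it corresponds, under Shipley's theorem and the intrinsic formality of Proposition~\ref{prop-formality}, to an object of the derived category of Euler-torsion $\cO_\F$-modules. As $F$ ranges over $\F$ and $G$ over $\G$, these objects sweep out a generating class for that category. By Corollary~\ref{cor-euler torsion}, the derived category of Euler-torsion $\cO_\F$-modules is compactly generated by $\{\mathbb{Q}_F\}_{F\in\F}$, so the cells $\{(0\to \Lambda\1\otimes K_{\p_F}\otimes G)\}$ and $\{(0\to \mathbb{Q}_F)\}_{F\in\F}$ generate the same localizing subcategory of $\widehat{\A}_t(\bbT)$.

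Combining the two reductions yields the asserted cell set $\{(t_\F\to 0),(0\to\mathbb{Q}_F)_{F\in\F}\}$. The main technical point to be careful about is the translation of the vertical cells through Steps~2--3: one must check that the Quillen equivalences there (in particular the categorical $\bbT$-fixed points functor and Shipley's algebraicization) really identify the localizing subcategory of torsion $\Lambda\1$-modules with the derived category of Euler-torsion $\cO_\F$-modules, so that Corollary~\ref{cor-euler torsion} can be applied; this is the only step where one is not simply matching up two specific objects but comparing two subcategories.
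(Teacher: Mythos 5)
Your proof takes a genuinely different route from the paper's. You start from the alternative cell set $\L$ produced in the proof of Theorem~\ref{thm:bifibrant} and simplify it; the paper instead starts from $\G_\mathrm{tors}=\{(\bbT/H_+)_\mathrm{tors}\}$ and \emph{computes} $\pi_*^\bbT$ of each cell explicitly, using~\cite[2.4.3]{Greenlees99}, the isotropy separation sequence, and the tom Dieck splitting to identify the homotopies as $(t_\F\to\bigoplus_F\Sigma^2\mathbb{I}(F))$ and $(0\to\Sigma^2 A(H))$, and then compares localizing subcategories at the level of these concrete $\cO_\F$-modules. Your horizontal-cell reduction is correct and clean: $L\1\otimes\bbT/F_+\simeq\ast$ for $F\in\F$ by the disjointness of geometric-isotropy supports, so only $G=S^0$ contributes and gives $(t_\F\to 0)$.

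However, your treatment of the vertical cells has a genuine gap, which you yourself flag but do not close. The assertion that the transported objects $\Lambda\1\otimes K_{\p_F}\otimes G$ ``sweep out a generating class'' for the derived category of Euler torsion $\cO_\F$-modules requires knowing that the composite zig-zag of Quillen equivalences in Steps~2--3 carries the localizing subcategory of $\mod{\Lambda\1}(\Sp_\bbT)$ generated by $\Lambda\1\otimes\K\otimes\G$ (i.e.\ the derived torsion subcategory) onto the localizing subcategory of $\mod{\cO_\F}$ generated by the $\Q_F$. This is not automatic: it requires either a support-theoretic argument that the equivalences of module categories respect the Balmer-spectral structure, or the identification of the derived torsion subcategory of $\mod{\cO_\F}$ with the derived category of the abelian category of Euler torsion modules (which is essentially the content of the cellular skeleton machinery of Section~\ref{sec:cellular skeleton thm}, but applied in a slightly different way than your plan invokes). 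The paper sidesteps this entirely by working with the explicit homotopy groups $A(H)$, $\mathbb{I}(F)$, $\Q_F$ and matching localizing subcategories of honest torsion $\cO_\F$-modules via Lemma~\ref{lem-euler torsion} and Corollary~\ref{cor-euler torsion}. Your approach buys conceptual economy (no tom Dieck splitting computation), but to be a complete proof it must include the argument that the Quillen equivalences identify the two torsion subcategories, rather than leaving that as a flagged concern.
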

\begin{proof}
We first determine the homotopy of the cells. For $H \in \F$, we put $\mathbb{I}(H)= \Sigma^{-2} \mathbb{Q}[c_H, c_H^{-1}]/ \mathbb{Q}[c_H]$ which is a torsion injective $H^*(B\mathbb{T}/H)$-module, and write $A(H)$ for the $H$-Burnside ring which is an Euler torsion $\cO_{\F}$-module. We use~\cite[2.4.3]{Greenlees99} to see that $\pi_*^{\mathbb{T}}(\Sigma E\F_+)= \bigoplus_{F\in \F} \Sigma^2 \mathbb{I}(F)$. The isotropy separation sequence and the tom Dieck splitting show that
$\pi_*^{\mathbb{T}}(\Sigma E \F_+ \wedge \mathbb{T}/H_+)= \pi_*^{\mathbb{T}}(\Sigma \mathbb{T}/H_+)=\Sigma^2 A(H)$. It follows that the cells have homotopy
\[
\pi_*^{\mathbb{T}}((\mathbb{T}/\mathbb{T}_+)_\mathrm{tors})= (t_{\F}\to \bigoplus_{F \in \F} \Sigma^2 \mathbb{I}(F)) \quad \mathrm{and} \quad 
\pi_*^{\mathbb{T}}((\mathbb{T}/H_+)_\mathrm{tors})=(0 \to \Sigma^2 A(H))
\] for $H \in \F$.
Using Corollary~\ref{cor-euler torsion}, one can see that the localizing subcategory generated by the cells $\{\pi_*^{\mathbb{T}}((\mathbb{T}/H_+)_\mathrm{tors}) \mid H \leq \mathbb{T} \}$ is the same as that generated by $( t_{\F} \to 0)$ and 
$(0 \to \mathbb{Q}_F)_{F \in \F}$ so they have the same cellularization.
\end{proof}

Since $\{(0 \to \mathbb{Q}_F)\}_{F \in \F}$ is a set of compact generators for Euler torsion modules by Corollary~\ref{cor-euler torsion}, applying the cellular skeleton theorem (Theorem~\ref{thm:cellularskeleton}) then gives a Quillen equivalence
\[\text{cell-}\widehat{\A}_t(\bbT) \simeq_Q d\A_t(\bbT) \]
which completes the proof of Theorem~\ref{thm-torsion model for T-spectra}.

\bibliographystyle{plain}
\bibliography{torsion}

\begin{thebibliography}{10}

\bibitem{Abbasirad}
M.~Abbasirad.
\newblock Homotopy theory of differential graded modules and adjoints of
  restriction of scalars.
\newblock University of Sheffield PhD thesis, 2014.

\bibitem{prismatic}
S.~Balchin, T.~Barthel, and J.~P.~C. Greenlees.
\newblock Prismatic decompositions and rational ${G}$-spectra.
\newblock In preparation.

\bibitem{BalchinGreenlees}
S.~{Balchin} and J.~P.~C. {Greenlees}.
\newblock Adelic models of tensor-triangulated categories.
\newblock {\em Advances in Mathematics}, 375:107339, 2020.

\bibitem{torsion2}
S.~Balchin, J.~P.~C. Greenlees, L.~Pol, and J.~Williamson.
\newblock Torsion models for tensor-triangulated categories.
\newblock In preparation.

\bibitem{Balmer05}
P.~Balmer.
\newblock The spectrum of prime ideals in tensor triangulated categories.
\newblock {\em J. Reine Angew. Math.}, 588:149--168, 2005.

\bibitem{BalmerFiltrations}
P.~Balmer.
\newblock Supports and filtrations in algebraic geometry and modular
  representation theory.
\newblock {\em Amer. J. Math.}, 129(5):1227--1250, 2007.

\bibitem{BalmerFavi11}
P.~Balmer and G.~Favi.
\newblock Generalized tensor idempotents and the telescope conjecture.
\newblock {\em Proc. Lond. Math. Soc. (3)}, 102(6):1161--1185, 2011.

\bibitem{Barnes09b}
D.~Barnes.
\newblock Splitting monoidal stable model categories.
\newblock {\em J. Pure Appl. Algebra}, 213(5):846--856, 2009.

\bibitem{BarnesGreenleesKedziorekShipley17}
D.~Barnes, J.~P.~C. Greenlees, M.~K\c{e}dziorek, and B.~Shipley.
\newblock Rational {${\textrm SO}(2)$}-equivariant spectra.
\newblock {\em Algebr. Geom. Topol.}, 17(2):983--1020, 2017.

\bibitem{BarthelHeardValenzuela18}
T.~Barthel, D.~Heard, and G.~Valenzuela.
\newblock Local duality in algebra and topology.
\newblock {\em Adv. Math.}, 335:563--663, 2018.

\bibitem{BensonIyengarKrause08}
D.~Benson, S.~B. Iyengar, and H.~Krause.
\newblock Local cohomology and support for triangulated categories.
\newblock {\em Ann. Sci. \'{E}c. Norm. Sup\'{e}r. (4)}, 41(4):573--619, 2008.

\bibitem{bergner}
J.~E. Bergner.
\newblock Homotopy limits of model categories and more general homotopy
  theories.
\newblock {\em Bull. Lond. Math. Soc.}, 44(2):311--322, 2012.

\bibitem{Brodmann-Sharp}
M.~P. Brodmann and R.~Y. Sharp.
\newblock {\em Local cohomology. An algebraic introduction with geometric
  applications}, volume 136 of {\em Cambridge Studies in Advanced Mathematics}.
\newblock Cambridge University Press, Cambridge, second edition, 2013.

\bibitem{BrunDundasStolz}
M.~Brun, B.~I. Dundas, and M.~Stolz.
\newblock {Equivariant Structure on Smash Powers}.
\newblock arXiv:1604.05939, 2016.

\bibitem{BuanKrauseSolberg07}
A.~B. Buan, H.~Krause, and {\O}.~Solberg.
\newblock Support varieties: an ideal approach.
\newblock {\em Homology Homotopy Appl.}, 9(1):45--74, 2007.

\bibitem{Dugger01}
D.~Dugger.
\newblock Replacing model categories with simplicial ones.
\newblock {\em Trans. Amer. Math. Soc.}, 353(12):5003--5027, 2001.

\bibitem{DwyerGreenlees02}
W.~G. Dwyer and J.~P.~C. Greenlees.
\newblock Complete modules and torsion modules.
\newblock {\em Amer. J. Math.}, 124(1):199--220, 2002.

\bibitem{Greenlees99}
J.~P.~C. Greenlees.
\newblock Rational {$S^1$}-equivariant stable homotopy theory.
\newblock {\em Mem. Amer. Math. Soc.}, 138(661):xii+289, 1999.

\bibitem{Greenlees01b}
J.~P.~C. Greenlees.
\newblock Tate cohomology in axiomatic stable homotopy theory.
\newblock In {\em Cohomological methods in homotopy theory ({B}ellaterra,
  1998)}, volume 196 of {\em Progr. Math.}, pages 149--176. Birkh\"{a}user,
  Basel, 2001.

\bibitem{Greenlees19}
J.~P.~C. Greenlees.
\newblock The {B}almer spectrum of rational equivariant cohomology theories.
\newblock {\em J. Pure Appl. Algebra}, 223(7):2845--2871, 2019.

\bibitem{Greenlees20b}
J.~P.~C. Greenlees.
\newblock Couniversal spaces which are equivariantly commutative ring spectra.
\newblock {\em Homology Homotopy Appl.}, 22(1):69--75, 2020.

\bibitem{GreenleesMay95b}
J.~P.~C. Greenlees and J.~P. May.
\newblock Completions in algebra and topology.
\newblock In {\em Handbook of algebraic topology}, pages 255--276.
  North-Holland, Amsterdam, 1995.

\bibitem{GreenleesShipley11}
J.~P.~C. Greenlees and B.~Shipley.
\newblock An algebraic model for free rational {$G$}-spectra for connected
  compact {L}ie groups {$G$}.
\newblock {\em Math. Z.}, 269(1-2):373--400, 2011.

\bibitem{GreenleesShipley13}
J.~P.~C. Greenlees and B.~Shipley.
\newblock The cellularization principle for {Q}uillen adjunctions.
\newblock {\em Homology Homotopy Appl.}, 15(2):173--184, 2013.

\bibitem{GreenleesShipley14c}
J.~P.~C. Greenlees and B.~Shipley.
\newblock Fixed point adjunctions for equivariant module spectra.
\newblock {\em Algebr. Geom. Topol.}, 14(3):1779--1799, 2014.

\bibitem{GreenleesShipley14b}
J.~P.~C. Greenlees and B.~Shipley.
\newblock Homotopy theory of modules over diagrams of rings.
\newblock {\em Proc. Amer. Math. Soc. Ser. B}, 1:89--104, 2014.

\bibitem{GreenleesShipley18}
J.~P.~C. Greenlees and B.~Shipley.
\newblock An algebraic model for rational torus-equivariant spectra.
\newblock {\em J. Topol.}, 11(3):666--719, 2018.

\bibitem{Haraguchi15}
T.~Haraguchi.
\newblock On model structure for coreflective subcategories of a model
  category.
\newblock {\em Math. J. Okayama Univ.}, 57:79--84, 2015.

\bibitem{HessKedziorekRiehlShipley17}
K.~Hess, M.~K\k{e}dziorek, E.~Riehl, and B.~Shipley.
\newblock A necessary and sufficient condition for induced model structures.
\newblock {\em J. Topol.}, 10(2):324--369, 2017.

\bibitem{Hirschhorn03}
P.~S. Hirschhorn.
\newblock {\em Model categories and their localizations}, volume~99 of {\em
  Mathematical Surveys and Monographs}.
\newblock American Mathematical Society, Providence, RI, 2003.

\bibitem{HopkinsSmith98}
M.~J. Hopkins and J.~H. Smith.
\newblock Nilpotence and stable homotopy theory. {II}.
\newblock {\em Ann. of Math. (2)}, 148(1):1--49, 1998.

\bibitem{Hovey99}
M.~Hovey.
\newblock {\em Model categories}, volume~63 of {\em Mathematical Surveys and
  Monographs}.
\newblock American Mathematical Society, Providence, RI, 1999.

\bibitem{HoveyPalmieriStrickland97}
M.~Hovey, J.~H. Palmieri, and N.~P. Strickland.
\newblock Axiomatic stable homotopy theory.
\newblock {\em Mem. Amer. Math. Soc.}, 128(610):x+114, 1997.

\bibitem{LewisMaySteinbergerMcClure}
L.~G. Lewis, Jr., J.~P. May, M.~Steinberger, and J.~E. McClure.
\newblock {\em Equivariant stable homotopy theory}, volume 1213 of {\em Lecture
  Notes in Mathematics}.
\newblock Springer-Verlag, Berlin, 1986.
\newblock With contributions by J. E. McClure.

\bibitem{SchwedeShipley00}
S.~Schwede and B.~Shipley.
\newblock Algebras and modules in monoidal model categories.
\newblock {\em Proc. London Math. Soc. (3)}, 80(2):491--511, 2000.

\bibitem{SchwedeShipley03}
S.~Schwede and B.~Shipley.
\newblock Equivalences of monoidal model categories.
\newblock {\em Algebr. Geom. Topol.}, 3:287--334, 2003.

\bibitem{Shipley04}
B.~Shipley.
\newblock A convenient model category for commutative ring spectra.
\newblock In {\em Homotopy theory: relations with algebraic geometry, group
  cohomology, and algebraic {$K$}-theory}, volume 346 of {\em Contemp. Math.},
  pages 473--483. Amer. Math. Soc., Providence, RI, 2004.

\bibitem{Shipley07}
B.~Shipley.
\newblock {$H\Bbb Z$}-algebra spectra are differential graded algebras.
\newblock {\em Amer. J. Math.}, 129(2):351--379, 2007.

\bibitem{Stevenson2013}
G.~Stevenson.
\newblock Support theory via actions of tensor triangulated categories.
\newblock {\em J. Reine Angew. Math.}, 681:219--254, 2013.

\bibitem{Stevenson}
G.~Stevenson.
\newblock Filtrations via tensor actions.
\newblock {\em Int. Math. Res. Not. IMRN}, (8):2535--2558, 2018.

\bibitem{Stevenson18}
G.~Stevenson.
\newblock A tour of support theory for triangulated categories through tensor
  triangular geometry.
\newblock In {\em Building bridges between algebra and topology}, Adv. Courses
  Math. CRM Barcelona, pages 63--101. Birkh\"{a}user/Springer, Cham, 2018.

\bibitem{ToenVezzosi08}
B.~To\"{e}n and G.~Vezzosi.
\newblock Homotopical algebraic geometry. {II}. {G}eometric stacks and
  applications.
\newblock {\em Mem. Amer. Math. Soc.}, 193(902):x+224, 2008.

\bibitem{White17}
D.~White.
\newblock Model structures on commutative monoids in general model categories.
\newblock {\em J. Pure Appl. Algebra}, 221(12):3124--3168, 2017.

\bibitem{White18}
D.~White.
\newblock Monoidal {B}ousfield localizations and algebras over operads.
\newblock arXiv:1404.5197, 2018.

\bibitem{Williamsonflat}
J.~Williamson.
\newblock Flatness and {S}hipley's algebraicization theorem.
\newblock {\em Homology Homotopy Appl.}, 23(1):191--218, 2021.

\bibitem{changeofgroups}
J.~Williamson.
\newblock Algebraic models of change of groups functors in (co)free rational
  equivariant spectra.
\newblock {\em J. Pure Appl. Algebra}, 226(11):Paper No. 107108, 53pp, 2022.

\end{thebibliography}
 \end{document}